\newtheorem{thm}{Theorem}[section]
\newtheorem{lem}[thm]{Lemma}
\theoremstyle{definition}
\newtheorem{defn}{Definition}[section]
\newcommand{\scr}[1]{\mathscr #1}
\definecolor{wco}{rgb}{0.5,0.2,0.3}
\numberwithin{equation}{section} \theoremstyle{remark}
\newcommand{\ua}{\uparrow}
\title{{\bf  Bismut Formula  for Lions Derivative of Distribution-Path Dependent SDEs} \footnote{Supported in
 part by  NNSFC (11771326, 11831014,  12071340, 11921001), and DFG through the CRC Taming uncertainty and profiting from randomness
and low regularity in analysis, stochastics and their applications.} }
\author{
{\bf  Jianhai Bao$^{a)}$, Panpan Ren$^{b)}$, Feng-Yu Wang$^{a),c)}$}\\
\footnotesize{$^{a)}$Center for Applied Mathematics, Tianjin
University, Tianjin 300072, China}\\
\footnotesize{$^{b)}$  Institute for Applied Mathematics, University of Bonn, Endenicher Allee 60, 53115, Germany}\\
\footnotesize{$^{c)}$Department of Mathematics, Swansea University,
Fabian Way, Skewen,  SA1 8EN, UK}\\
\footnotesize{jianhaibao@tju.edu.cn, p.ren@iam.uni-bonn.de,
wangfy@tju.edu.cn}}
\begin{document}
\allowdisplaybreaks
\def\R{\mathbb R}  \def\ff{\frac} \def\ss{\sqrt} \def\B{\mathbf
B}
\def\N{\mathbb N} \def\kk{\kappa} \def\m{{\bf m}}
\def\ee{\varepsilon}\def\ddd{D^*}
\def\dd{\delta} \def\DD{\Delta} \def\vv{\varepsilon} \def\rr{\rho}
\def\<{\langle} \def\>{\rangle} \def\GG{\Gamma} \def\gg{\gamma}
  \def\nn{\nabla} \def\pp{\partial} \def\E{\mathbb E}
\def\d{\text{\rm{d}}} \def\bb{\beta} \def\aa{\alpha} \def\D{\scr D}
  \def\si{\sigma} \def\ess{\text{\rm{ess}}}
\def\beg{\begin} \def\beq{\begin{equation}}  \def\F{\scr F}
\def\Ric{\text{\rm{Ric}}} \def\Hess{\text{\rm{Hess}}}
\def\e{\text{\rm{e}}} \def\ua{\underline a} \def\OO{\Omega}  \def\oo{\omega}
 \def\tt{\tilde} \def\Ric{\text{\rm{Ric}}}
\def\cut{\text{\rm{cut}}} \def\P{\mathbb P} \def\ifn{I_n(f^{\bigotimes n})}
\def\C{\scr C}   \def\G{\scr G}   \def\aaa{\mathbf{r}}     \def\r{r}
\def\gap{\text{\rm{gap}}} \def\prr{\pi_{{\bf m},\varrho}}  \def\r{\mathbf r}
\def\Z{\mathbb Z} \def\vrr{\varrho} \def\ll{\lambda}
\def\L{\scr L}\def\Tt{\tt} \def\TT{\tt}\def\II{\mathbb I}
\def\i{{\rm in}}\def\Sect{{\rm Sect}}  \def\H{\mathbb H}
\def\M{\scr M}\def\Q{\mathbb Q} \def\texto{\text{o}} \def\LL{\Lambda}
\def\Rank{{\rm Rank}} \def\B{\scr B} \def\i{{\rm i}} \def\HR{\hat{\R}^d}
\def\to{\rightarrow}\def\l{\ell}\def\iint{\int}
\def\EE{\scr E}\def\no{\nonumber}
\def\A{\scr A}\def\V{\mathbb V}\def\osc{{\rm osc}}
\def\BB{\mathbb B}\def\Ent{{\rm Ent}}\def\W{\mathbb W}
\def\U{\scr U}\def\8{\infty} \def\si{\sigma}\def\1{\lesssim}
\def\33{\interleave}

\renewcommand{\bar}{\overline}
\renewcommand{\tilde}{\widetilde}
\maketitle

\begin{abstract} To characterize the regularity of distribution-path dependent SDEs in the initial distribution   which varies in the class of  probability measures  on the path space,
we introduce  the  intrinsic and Lions derivatives  for  probability measures on  Banach spaces,
and prove the chain rule of the Lions derivative for the distribution of   Banach-valued random variables.
By using Malliavin calculus, we establish the Bismut type formula for the Lions derivatives of functional solutions to   SDEs with  distribution-path dependent drifts.
When the noise term is also path dependent so that
the Bismut formula is invalid,     we establish the asymptotic Bismut formula. Both non-degenerate and degenerate noises are considered.
The main results of this paper generalize and improve the corresponding ones derived recently in the literature for the classical SDEs with memory  and   McKean-Vlasov SDEs without memory.

\end{abstract} \noindent
 {\bf AMS subject classification 2020}:\  60J60, 58J65.   \\
\noindent
 {\bf Keywords}: distribution-path dependent SDEs, Bismut formula, asymptotic Bismut formula, Malliavin calculus,  Lions
 derivative
 \vskip 2cm

 \section{Introduction}

 To characterize stochastic systems with evolutions affected by  both  micro environment and history, the distribution-path dependent SDEs have been considered in \cite{HRW, RW20}, where  the Harnack type inequalities,
 ergodicity and long time large deviation principles  are investigated.
 This type SDEs generalize the McKean-Vlasov  (distribution dependent or mean-field) SDEs and path dependent (functional)   SDEs (or SDEs with memory). Both have been   studied intensively in the literature; see, for instance, the monographs \cite{BYY, CDLL19} and references within.

On the other hand, as a powerful tool in the study of regularity for diffusion processes,  a derivative formula on diffusion semigroups was established first by Bismut in \cite{Bismut} using Malliavin calculus, and then  by Elworthy-Li   in \cite{EL}  using  a martingale argument.  Hence, this type derivative formula is named as Bismut formula or Bismut-Elworthy-Li formula.
Moreover, a new coupling method (called coupling by change of measures)  was introduced to establish derivative formulas and Harnack inequalities for SDEs and SPDEs; see, for example,  \cite{W13} and references therein.
   Due to their wide
applications, the Bismut type   formulas  have been investigated for  different models; see, for instance,   \cite{BF,PZ,Song,Take,WXZ,Zhang13}
for SDEs/SPDEs driven by jump processes,  \cite{Fuh,GW,Priola,Wang14,Wang16,WZ13,Zhang} for
 hypoelliptic diffusion
semigroups, and   \cite{ACHP,FR,Fan} for SDEs
with fractional noises.

Recently, the  Bismut type  formulas have been established in \cite{BWY} for the G\^ateaux derivative   of  functional solutions to path dependent SDEs,     in \cite{RW19} for the Lions derivative of solutions to McKean-Vlasov SDEs.  See also
  \cite{Banos,CM} for the study  of  derivative  in the  initial points for McKean-Vlasov  SDEs, and Lions derivative for solutions to the de-coupled  SDEs (which do not depend on the distribution of its own solution) associated with McKean-Vlasov SDEs.
In these references,  the noise term is distribution-path independent.  However, when the noise term is path dependent,  the distribution of   the solution is  no longer differentiable in the initial distribution, so that the Bismut type formula is invalid. In this case,
a  weaker    derivative formula, called  asymptotic Bismut formula,  has been established in \cite{KS}.

The aim of this paper is to
establish (asymptotic) derivative formulas for the Lions derivative in the initial distribution  of distribution-path dependent SDEs,   so that results derived in \cite{BWY, KS, RW19} are generalized and improved.
Since the functional solution of a  distribution-path  
 dependent SDE takes values in the path space $C([-r_0,0];\R^d)$, where $r_0>0$ is the length of memory, to investigate  the regularities  of the solution in initial distributions,
we will introduce and study derivatives for probability measures on the path space (or more generally, on a Banach space), which is new in the literature.

\

For a fixed number $r_0>0$, the path space
 $\C:=C([-r_0,0];\R^d)$  is a separable Banach space  under the uniform norm
$$\|\xi\|_\C:=\sup_{-r_0\le \theta\le 0}|\xi(\theta)|,\ \ \xi\in \C.$$ For $t\ge0$ and $f\in
C([-r_0,\8);\R^d)$, the $\C$-valued function  $(f_t)_{t\ge 0}$ defined by
$$f_t(\theta)=f(t+\theta),\ \ \theta\in[-r_0,0]$$ is called  the segment (or window) process
of $(f(t))_{t\ge-r_0}.$ Let $\L_{\xi}$ stand for the distribution of a random variable $\xi$. When different probability measures are concerned, we also denote $\L_{\xi}$ by  $\L_{\xi|\P}$  
to emphasize the reference probability measure $\P$.
Let $\mathscr P(\C)$ be the collection of all probability measures on $\C$ and, for $p\in [1,\infty),$  $\mathscr P_p(\C)$   the set of probability measures on $\C$ with finite $p$-th moment, i.e.,
$$\mathscr P_p(\C) =\big\{\mu\in\mathscr P(\C): \|\mu\|_p :=\{\mu(\|\cdot\|_\C^p)\}^{\ff 1 p}  <\8\big\},$$ where $\mu(f):=\int f\d\mu$ for a measurable function $f$.   Then $\mathscr P_p(\C)$ is a Polish space under the   $\mathbb W_p$-Wasserstein  distance defined by
$$\mathbb W_p(\mu,\nu)=\inf_{\pi\in\mathcal C(\mu,\nu)}\bigg(\int_{\C\times\C}\|\xi-\eta\|_\C^p\pi(\d\xi,\d\eta)\bigg)^{\ff{1}{p }},~~~\mu,\nu\in\mathscr P_p(\C),~~p>0,$$
where $\mathcal C(\mu,\nu)$ is the set of all couplings of $\mu$ and $\nu.$

Consider the following  McKean-Vlasov    SDE with memory (also called distribution-path dependent SDE):
\begin{equation}\label{E1}
\d X(t)=b(t,X_t,\L_{X_t})\d t+\si(t,X_t,\L_{X_t})\d W(t),~~~t\ge0,
\end{equation}
where $(W(t))_{t\ge0}$ is an $m$-dimensional Brownian motion on a complete
filtration   probability space $(\OO,\F,(\F_t)_{t\ge0},\P)$, and  $$b:[0,\8)\times\C\times\mathscr P (\C) \to\R^d,\ \ \si:[0,\8)\times\C\times\scr P(\C)\to\R^d\otimes \R^m$$
are measurable and satisfy  the following assumption.

 \paragraph{(A)} Let  $p\in [1,\infty).$
\begin{enumerate}
\item[$ (A_1) $ ] $b$ and $\si$ are bounded    on bounded subsets
of $[0,\8)\times\C\times\mathscr P_p(\C)$.
\item[$(A_2)$]  For any $T>0$, there is  a constant   $K\ge 0$ such that \begin{align*}
&2\<\xi(0)-\eta(0),b(t,\xi,\mu)-b(t,\eta,\nu)\>^++\|\si(t,\xi,\mu)-\si(t,\eta,\nu)\|_{\rm HS}^2\\
&\le K\big\{\|\xi-\eta\|_\C^2 +   \mathbb W_p(\mu,\nu)^2\big\},\ \ \xi,\eta\in\C, \mu,\nu\in\mathscr P_p(\C),t\in[0,T].
\end{align*}
\item[$(A_3)$] When $p\in [1,2)$, $\si(t,\xi,\mu)=\si(t,\xi)$ depends only on $t$ and $\xi$.
\end{enumerate}

For any $\F_0$-measurable  random variable $X_0\in\C$, an adapted continuous process $(X(t))_{t\ge 0}$ is called a solution with the initial value $X_0$, if $\P$-a.s.
$$X(t)= X(0)+ \int_0^t b(s,X_s,\L_{X_s})\d s+\int_0^t\si(s,X_s,\L_{X_s})\d W(s),\ \ t\ge 0,$$
where   the segment process $(X_t)_{t\ge 0}$ associated with  the solution process $$X(t):= X(t)1_{(0,\infty)}(t)+ X_0(t)1_{[-r_0,0]}(t),~~~t\ge-r_0$$
is called a functional solution to \eqref{E1}.

According to Lemma \ref{Lem01} below,
under the assumption {\bf(A)},   for any $X_0\in L^p(\OO\to\C,\F_0,\P)$,  \eqref{E1} has a unique functional solution $(X_t)_{t\ge 0}$ satisfying
$$\E\Big(\sup_{0\le s\le t} \|X_s\|_\C^p\Big)<\infty,\ \ t>0.$$  To emphasize the initial distribution, we denote the functional solution by $X_t^\mu$ if $\L_{X_0}=\mu.$ In this paper, we aim to  investigate the  Lions  derivative of  the functional  $\mu\mapsto(P_tf)(\mu)$, where
\beq\label{SM} (P_tf)(\mu) :=\E f(X_t^\mu),\ \ t>0 ,f\in \B_b(\C), \mu \in\mathscr P(\C).\end{equation} This refers to the regularity of  the law  $\L_{X_t^\mu}$ w.r.t. the initial distribution $\mu$. Due to the weak uniqueness ensured by Lemma \ref{Lem01} below,
$(P_tf)(\mu)$ is a function of $\mu$; i.e., it only depends on $\mu$ rather than the choices of the initial value $X_0$, the Brownian motion and the reference probability space.  

\

The remainder of this paper is organized as follows.
Since $\C$ is a Banach space,
in Section 2 we  introduce the intrinsic and  Lions derivatives   for   probability measures on Banach spaces, and establish a   derivative formula in  the distribution of Banach-valued random variables.
In Section 3,  we prove  the well-posedness of   \eqref{E1} under assumption {\bf (A)}, which generalizes the corresponding results derived in \cite{HRW} for $p=2$ and in \cite{RW20} for Lipschitz continuous $b(t,\cdot).$  In   Sections 4 and 5, we calculate  the
Malliavin derivative of $X_t^\mu$ with respect to the Brownian motion $W(t)$, and the Lions derivative of $X_t^\mu$ in the initial distribution $\mu$, respectively. Finally, in Sections 6 and 7, we establish the Bismut type formula
 for the Lions derivative of $(P_tf)(\mu)$ in $\mu$ when $\si(t,\xi,\mu)=\si(t,\xi(0))$ depends only on $t$ and $\xi(0)$, and
the asymptotic Bismut formula for the Lions derivative  of $(P_tf)(\mu)$ in $\mu$ in case of  $\si(t,\xi,\mu)=\si(t,\xi)$ (i.e.,  the diffusion term is path dependent but independent of the  measure argument $\mu$).

 \section{Derivatives in probability measures on a separable Banach space}

 In this part, we introduce the intrinsic and Lions derivatives for probability measures on a separable Banach space,   and establish the chain rule for  the distribution of Banach-valued random variables.
 These will be used to establish the (asymptotic) Bismut type formulas for the intrinsic and Lions derivatives  of $(P_tf)(\mu)$.

 The intrinsic derivative was first introduced in \cite{AKR} on  the configuration space over Riemannian manifolds,
 while the Lions derivative (denoted by $L$-derivative in the literature) was developed on the Wasserstein space $ \scr P_2(\R^d) $ from Lions' lectures \cite{Card} concerning mean-field games,
 where $ \scr P_2(\R^d) $ consists of all probability measures on $\R^d$ with finite second moment. The relation between  them has been clarified in the recent paper \cite{RW19b,RW19c}, where  the latter is a stronger notion  than the former and they coincide   if both exist.

 Let $(\BB,\|\cdot\|_\BB)$ be a separable Banach space, and let    $(\BB^*,\|\cdot\|_{\BB^*})$  be its dual space.  For any  $p\in[1,\infty)$, denote $p^*=\ff p{p-1}$  when $p>1$ and $p^*=\infty$ as $p =1.$ Let $\scr P(\BB)$ be the class of all probability measures on $\BB$ equipped with the weak topology. Then
 $$\scr P_p(\BB):=\big\{\mu\in\scr P(\BB): \|\mu\|_p:= \{\mu(\|\cdot\|_\BB^p)\}^{\ff 1 { p}} <\infty\big\}$$
 is a Polish space under the $L^p$-Wasserstein distance
 $$\W_p(\mu_1,\mu_2):= \inf_{\pi\in\mathcal C(\mu_1,\mu_2)} \bigg(\int_{\BB\times\BB} \|x-y\|_\BB^p\pi(\d x,\d y)\bigg)^{\ff 1 {  p}},$$
 where $\mathcal C(\mu_1,\mu_2)$ is the set of all couplings of $\mu_1$ and $\mu_2.$

 For any $\mu\in \scr P_p(\BB)$,   the tangent space at $\mu$ is given by
 $$T_{\mu,p}= L^p(\BB\to\BB;\mu):= \big\{\phi:\BB\to\BB\ \text{is\ measurable\ with\ } \mu(\|\phi\|_\BB^p)<\infty\big\},$$
 which is a Banach space under the norm $\|\phi\|_{T_{\mu,p}}:=  \{\mu(\|\phi\|_\BB^p)\}^{\ff 1  p},$ and its dual space is
 $$T_{\mu,p}^*=L^{p^*}(\BB\to\BB^*;\mu) := \big\{\psi:\BB\to\BB^*\ \text{is\ measurable\ with\ } \|\psi\|_{T_{\mu,p}^*}:=\big\|\|\psi\|_{\BB^*}\big\|_{L^{p^*}(\mu)}<  \infty\big\}.$$

\begin{defn}
Let $f:\mathscr{P}_p(\BB)\to\R$  be a continuous function for some $p\in [1,\infty)$, and let ${\rm Id}$ be the identity map on $\BB$.
\begin{enumerate}
\item[(1)] $f$ is called intrinsically differentiable at a point $\mu\in\scr P_p(\BB)$, if
 $$T_{\mu,p}\ni\phi\mapsto D_\phi^Lf(\mu):= \lim_{\vv\downarrow 0} \ff{f(\mu\circ(\mbox{Id}+\vv\phi)^{-1})-f(\mu)}{\vv}\in\R
$$ is a well-defined bounded linear functional. In this case, the unique element $D^Lf(\mu)\in T_{\mu,p}^*$ such that
$$_{T_{\mu,p}^*}\<D^Lf(\mu), \phi\>_{T_{\mu,p}}:=\int_\BB\,_{\BB^*}\<D^Lf(\mu)(x), \phi(x)\>_{\BB} \mu(\d x) = D_\phi^L f(\mu),\ \ \phi\in T_{\mu,p}$$  is called the intrinsic derivative of $f$ at
$\mu.$

If moreover
 \begin{equation*}
\lim_{\|\phi\|_{T_{\mu,p}}\downarrow0}\ff{|f(\mu\circ(\mbox{Id}+\phi)^{-1})-f(\mu)-D_\phi^Lf(\mu)|}{\|\phi\|_{T_{\mu,p}}}=0,
\end{equation*}
 $f$ is called  $L$-differentiable at $\mu$  with  the $L$-derivative (i.e., Lions derivative) $D^Lf(\mu)$.
\item[(2)]  We write $f\in C^1(\scr P_p(\BB))$ if $f$ is $L$-differentiable at any point $\mu\in \scr P_p(\BB)$, and the $L$-derivative has a version $D^Lf(\mu)(x)$ jointly continuous in $(x,\mu)\in \BB\times \scr P_p(\BB)$. If moreover
    $D^Lf(\mu)(x)$ is bounded, we denote $f\in C_b^1(\scr P_p(\BB))$.
\end{enumerate}
\end{defn}

\beg{thm}\label{T0} Let $f:\scr P_p(\BB)\to\R$ be continuous for some $p\in [1,\infty)$, and let $(\xi_\vv)_{\vv\in [0,1]}$ be a family of $\BB$-valued random variables on a  complete probability space $(\OO,\F,\P)$ such that $\dot \xi_0:=\lim_{\vv\downarrow 0} \ff{\xi_\vv-\xi_0}\vv$ exists in $L^p(\OO)$. We assume that either $\xi_\vv$ is continuous in $\vv\in [0,1]$ or
the probability space is Polish $($i.e., $\F$ is the $\P$-complete  Borel $\si$-field induced by a Polish metric on $\OO)$.
\beg{enumerate} \item[$(1)$] Let $\mu_0=\L_{\xi_0}$ be atomless. If $f$ is $L$-differentiable such that $D^Lf(\mu_0)$ has a continuous version satisfying
\beq\label{GRW} \|D^Lf(\mu_0)(x)\|_{\BB^*} \le C(1+ \|x\|_\BB^{p/p^*}1_{\{p>1\}}),\ \ x\in\BB\end{equation} for some constant $C>0$,
 then
\beq\label{CR} \lim_{\vv\downarrow 0} \ff{f(\L_{\xi_\vv})-f(\L_{\xi_0})}\vv=\E[_{\BB^*}\<D^Lf(\mu_0)(\xi_0),\dot\xi_0\>_\BB].\end{equation}
 \item[$(2)$]  If $f$ is $L$-differentiable in a neighbourhood $O$ of $\mu_0$ such that $D^Lf$ has a version jointly continuous
 in $(x,\mu)\in \BB\times O$ satisfying
 \beq\label{GRW'} \|D^Lf(\mu)(x)\|_{\BB^*} \le C(1+ \|x\|_\BB^{p/p^*}1_{\{p>1\}}),\ \ (x,\mu)\in\BB\times O\end{equation} for some constant $C>0$, then $\eqref{CR}$ holds.
 \end{enumerate} \end{thm}

To prove this result,  we need the following lemma  similar to \cite[Lemma A.2]{HSS} for the special case that $\scr P_p(\BB)=\scr P_2(\R^d)$ (i.e., $p=2$ and $\BB=\R^d$).

 \beg{lem}\label{LN1} Let $\{(\OO_i,\F_i,\P_i)\}_{i=1,2}$ be two  atomless, Polish complete probability spaces, and let $X_i, i=1,2,$ be $\BB$-valued random variables on these two probability  spaces respectively such that  $\L_{X_1|\P_1}=\L_{X_2|\P_2}$. 
 Then for any $\vv>0$,  there exist measurable maps
 $$\tau: \OO_1\to\OO_2,\ \ \tau^{-1}:\OO_2\to\OO_1$$ such that
\beg{align*} &\P_1(\tau^{-1}\circ\tau={\rm Id}_{\OO_1}) = \P_2(\tau\circ\tau^{-1}={\rm Id}_{\OO_2})= 1,\\
 & \P_1 = \P_2\circ\tau,\ \ \P_2= \P_1\circ \tau^{-1},\\
&\|X_1-X_2\circ\tau\|_{L^\infty(\P_1)}+ \|X_2-X_1\circ\tau^{-1}\|_{L^\infty(\P_2)}\le\vv,\end{align*} where ${\rm Id}_{\OO_i}$ stands for  the identity map on $\OO_i, i=1,2.$
  \end{lem}

  \beg{proof}  Since $\BB$ is separable, there is a measurable partition $(A_n)_{n\ge 1}$ of $\BB$ such that ${\rm diam}(A_n)<\vv$, $n\ge 1.$ Let
  $A_n^i=\{X_i\in A_n\}, n\ge 1, i=1,2.$ Then $(A_n^i)_{n\ge1}$ forms a measurable partition of $\OO_i$ so that
  $\sum_{n\ge 1}A_n^i=\OO_i, i=1,2,$ and, due to
 $\L_{X_1}|\P_1=\L_{X_2}|\P_2$,
  $$\P_1(A_n^1)= \P_2(A_n^2),\ \ n\ge 1.$$
  Since the probabilities $(\P_i)_{i=1,2}$ are atomless, according to \cite[Theorem C in Section 41]{Ham}, for any $n\ge 1$    there exist    measurable sets $\tt A_n^i\subset A_n^i$ with $\P_i(A_n^i\setminus \tt A_n^i)=0, i=1,2,$ and a measurable bijective map
  $$\tau_n:  \tt A_n^1\to  \tt A_n^2 $$
  such that
$$ \P_1|_{ \tt A_n^1}= \P_2\circ\tau_n|_{ \tt A_n^1},\ \ \P_2|_{\tt A_n^2}= \P_1\circ\tau_n^{-1}|_{\tt A_n^2}.$$ By  ${\rm diam}(A_n)<\vv$ and $\P_i(A_n^i\setminus \tt A_n^i)=0$, we have
$$ \|(X_1-X_2\circ\tau_n)1_{\tt A_n^1}\|_{L^\infty(\P_1)}\lor  \|(X_2-X_1\circ\tau_n^{-1})1_{\tt A_n^2}\|_{L^\infty(\P_2)}\le\vv.$$
  Then the proof is finished by taking,  for fixed points $\hat\oo_i\in \OO_i, i=1,2,$
$$  \tau(\oo_1) := \beg{cases}   \tau_n (\oo_1),\ &\text{if}\ \oo_1\in \tt A_n^1\text{\ for\ some\ } n\ge 1,\\
\hat\oo_2,\ &\text{otherwise,}\end{cases} $$
$$\tau^{-1}(\oo_2) := \beg{cases}   \tau_n^{-1} (\oo_2),\ &\text{if}\ \oo_2\in \tt A_n^2\text{\ for\ some\ } n\ge 1,\\
\hat \oo_1,\ &\text{otherwise.}\end{cases}$$ \end{proof}

\beg{proof} [Proof of Theorem \ref{T0}]  Without loss of generality, we may and do assume that $\P$ is atomless. Otherwise, by taking
$$(\tt\OO,\tt\F,\tt \P):= (\OO\times [0,1], \F\times \B([0,1]), \P\times \d s),\ \ (\tt \xi_\vv)(\oo,s):= \xi_\vv(\oo)\ \text{for}\  (\oo,s)\in \tt\OO,$$
where $\B([0,1])$ is the completion of the Borel $\si$-algebra on $[0,1]$ w.r.t. the Lebesgue measure $\d s,$
we have
$$ \L_{\tt\xi_\vv|\tt\P}=\L_{\xi_\vv|\P},\ \  
\E[_{\BB^*}\<D^Lf(\mu_0)(\xi_0),\dot\xi_0\>_\BB]=\tt\E[_{\BB^*}\<D^Lf(\mu_0)(\tt\xi_0),\dot{\tt\xi}_0\>_\BB].$$
In this way, we go back to the atomless situation. Moreover, it suffices to prove for the Polish probability space case. Indeed,  when  $\xi_\vv$  is continuous in $\vv$, we may take
$\bar \OO= C([0,1];\R^d)$,   let $\bar \P$ be the distribution of $\xi_\cdot$, let  $\bar \F$ be the $\bar \P$-complete Borel $\si$-field on $\bar\OO$ induced by the uniform norm, and consider
the coordinate random variable $\bar\xi_\cdot(\oo):=\oo, \oo\in \bar\OO$. Then
  $\L_{\bar\xi_\cdot|\tt\P}= \L_{\xi_\cdot|\P}$, so that $\L_{\bar\xi_\vv|\bar\P} = \L_{\xi_\vv|\P}$ for any $\vv\in [0,1]$ and $\L_{\bar\xi'_0|\bar\P}= \L_{\xi'_0|\P}$, hence we have reduced the situation to the Polish   setting.

(1) Let $\L_{\xi_0}=\mu_0\in \scr P_p(\BB)$ be atomless. In this case,  $(\BB,\B(\BB),\mu_0)$ is an atomless Polish complete probability space, where $\B(\BB)$ is the $\mu_0$-complete Borel $\si$-algebra of $\BB$. By Lemma \ref{LN1}, for any $n\ge 1$ we find
measurable maps
$$\tau_n: \OO \to   \BB,\ \ \tau_n^{-1}: \BB\to\OO$$ such that
\beq\label{AB3} \beg{split} &\P(\tau_n^{-1}\circ\tau_n={\rm Id}_{\OO})= \mu_0(\tau_n\circ\tau_n^{-1}={\rm Id})=1,\\
&\P = \mu_0\circ\tau_n,\ \ \mu_0=\P\circ\tau_n^{-1}, \\
&\|\xi_0-\tau_n\|_{L^\infty(\P)} + \|{\rm Id}-\xi_0\circ\tau_n^{-1}\|_{L^\infty(\mu_0)}\le \ff 1 n,\end{split} \end{equation}
where ${\rm Id}={\rm Id}_\BB$ is the identity map on $\BB$.

Since $f$ is $L$-differentiable at $\mu_0$, there exists a decreasing function $h: [0,1]\to [0,\infty)$ with $h(r)\downarrow 0$ as $r\downarrow 0$ such that
\beq\label{AB4} \sup_{\|\phi\|_{L^p(\mu_0)}\le r} \big|f(\mu_0\circ({\rm Id}+\phi)^{-1} ) -f(\mu_0)- D^L_\phi f(\mu_0)\big| \le r h(r),\ \ r\in [0,1].\end{equation}
By $\L_{\xi_\vv-\xi_0}\in \scr P_p(\BB)$ and  \eqref{AB3},  we have
\beq\label{AB5} \phi_{n,\vv} := (\xi_\vv-\xi_0)\circ \tau_n^{-1}\in T_{\mu,p},\ \ \|\phi_{n,\vv}\|_{T_{\mu,p}}= \|\xi_\vv-\xi_0\|_{L^p(\P)}.\end{equation}
Next, \eqref{AB3} implies
\beq\label{ABB0} \L_{\tau_n+\xi_\vv-\xi_0} = \P\circ(\tau_n+\xi_\vv-\xi_0)^{-1}= (\mu_0\circ\tau_n)\circ(\tau_n+\xi_\vv-\xi_0)^{-1}=    \mu_0\circ({\rm Id}+\phi_{n,\vv})^{-1}.\end{equation}
Moreover, by   $\ff{\xi_\vv-\xi_0}\vv\to \dot \xi_0$ in $L^p(\P)$ as $\vv\downarrow 0$, we find a constant $c\ge 1$ such that
\beq\label{ABB} \|\xi_\vv-\xi_0\|_{L^p(\P)}\le c\vv,\ \ \vv\in [0,1].\end{equation}
Combining   \eqref{AB3}-\eqref{ABB} leads to
\beq\label{AB6} \beg{split} &\big|f(\L_{\tau_n+\xi_\vv-\xi_0})- f(\L_{\xi_0})-\E[_{\BB^*}\<(D^Lf)(\mu_0)(\tau_n),  (\xi_\vv-\xi_0)\>_\BB]\big|\\
&= \big|f(\mu_0\circ({\rm Id}+\phi_{n,\vv})^{-1})- f(\mu_0)- D^L_{\phi_{n,\vv}}f(\mu_0)\big|\\
&\le \|\phi_{n,\vv}\|_{T_{\mu,p}} h(\|\phi_{n,\vv}\|_{T_{\mu,p}})= \|\xi_\vv-\xi_0\|_{L^p(\P)} h( \|\xi_\vv-\xi_0\|_{L^p(\P)}),\ \ \vv\in [0,c^{-1}].\end{split}\end{equation} Since
$f(\mu)$ is continuous in $\mu$ and $D^Lf(\mu_0)(x)$ is continuous in $x$, by \eqref{GRW} and  \eqref{AB3}, we may apply the dominated convergence theorem to deduce from \eqref{AB6} with $n\to\infty$ that
$$\big|f(\L_{\xi_\vv})- f(\L_{\xi_0})-\E[_{\BB^*}\<(D^Lf)(\mu_0)(\xi_0),  (\xi_\vv-\xi_0)\>_\BB]\big| \le  \|\xi_\vv-\xi_0\|_{L^p(\P)} h( \|\xi_\vv-\xi_0\|_{L^p(\P)}),\ \ \vv\in [0,c^{-1}].$$   Combining this with \eqref{ABB} and  $h(r)\to 0$ as $r\to 0$,
we prove \eqref{CR}.

(2) When $\mu_0$ has an atom, we take a $\BB$-valued bounded random variable $X$ which is independent of $(\xi_\vv)_{\vv\in [0,1]}$ and $\L_{X} $ does not have an atom. Then $ \L_{\xi_0+ sX+ r(\xi_\vv-\xi_0)}\in \scr P_p(\B)$
  does not have atom  for any $s>0,\vv\in [0,1]$. By conditions in Theorem \ref{T0}(2),  there exists a small constant $s_0\in (0,1)$ such that for any $s,\vv\in (0,s_0]$, we may apply
\eqref{CR} to the family $\xi_0+ sX+ (r+\dd)(\xi_\vv-\xi_0)$ for small $\dd>0$  to conclude
\beg{align*} &f(\L_{\xi_\vv+s X})-f(\L_{\xi_0+s X})= \int_0^1\ff{\d}{\d \dd} f(\L_{\xi_0+ sX+ (r+\dd)(\xi_\vv-\xi_0)}) \big|_{\dd=0}\,\d r\\
&=\int_0^1 \E[_{\BB^*}\<D^Lf(\L_{\xi_0+ sX+ r(\xi_\vv-\xi_0)}) (\xi_0+ sX+ r(\xi_\vv-\xi_0)), \xi_\vv-\xi_0\>_\BB] \,\d r.\end{align*}
By conditions in Theorem \ref{T0}(2), we may let $s\downarrow 0$ to derive
$$f(\L_{\xi_\vv})-f(\L_{\xi_0})= \int_0^1 \E[_{\BB^*}\<D^Lf(\L_{\xi_0+ r(\xi_\vv-\xi_0)}) (\xi_0+ r(\xi_\vv-\xi_0)), \xi_\vv-\xi_0\>_\BB] \,\d r,\ \ \vv\in (0,s_0).$$
Multiplying both sides by $\vv^{-1}$ and letting $\vv\downarrow 0$ ,  we finish the proof.
\end{proof}

\section{Well-posedness of   \eqref{E1}}

 When $p=2$,  the existence and uniqueness of strong solutions to \eqref{E1} follows from  \cite[Theorem 3.1]{HRW};  see also \cite[Theorem 3.1]{RW20} for $p\ge 2$, where $b(t,\xi,\mu) $ is Lipschitz continuous in $(\xi,\mu)\in \C\times \scr P_p(\C)$. In the following result, the drift  $b(t,\xi,\mu)$ may be   non-Lipschitz continuous  w.r.t. $\xi$.

\begin{lem}\label{Lem01}
 Assume {\bf (A)} for some $p\in [1,\infty)$ and let $T\ge 0.$
There exists a constant  $c>0$ such that for any $X_0\in L^p(\OO\to\C, \F_0,\P)$,
$\eqref{E1}$ has a  functional solution $X_{[0,T]}:=(X_t)_{t\in [0,T]}$   satisfying
\beq\label{ESTY}\E\Big(\sup_{0\le t\le T} \|X_t\|_\C^p\Big) \le c\,   \Big(1+ \E \|X_0\|_\C^p \Big),\end{equation} and   any two functional solutions $X_{[0,T]}$ and $Y_{[0,T]}$ satisfy
\beq\label{ESTY'}\E\Big(\sup_{0\le t\le T} \|X_t-Y_t\|_\C^p\Big) \le c\,  \E \|X_0-Y_0\|_\C^p.\end{equation}
Consequently, the SDE $\eqref{E1}$ is strongly and weakly well-posed.  \end{lem}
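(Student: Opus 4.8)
The plan is to decouple the distribution argument. For a given flow $\mu_\cdot=(\mu_t)_{t\in[0,T]}\in C([0,T];\scr P_p(\C))$, consider the path-dependent SDE
\[
\d X(t)=b(t,X_t,\mu_t)\,\d t+\si(t,X_t,\mu_t)\,\d W(t),\qquad \L_{X_0}=\mu_0,
\]
whose coefficients, viewed as functions of $(t,\xi)$, are locally bounded by $(A_1)$ and monotone by $(A_2)$ (taking $\mu=\nu=\mu_t$ there, the latter reads $2\<\xi(0)-\eta(0),b(t,\xi,\mu_t)-b(t,\eta,\mu_t)\>^++\|\si(t,\xi,\mu_t)-\si(t,\eta,\mu_t)\|_{\rm HS}^2\le K\|\xi-\eta\|_\C^2$). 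Strong well-posedness of this auxiliary equation I would obtain as in \cite{HRW,RW20}: pathwise uniqueness and an a~priori $L^p$-bound from It\^o's formula (see below), strong existence by approximating $b$ by Lipschitz drifts and passing to the limit via the monotonicity, and then weak uniqueness from Yamada--Watanabe. Writing $X^{\mu_\cdot}$ for the resulting solution and $\Phi(\mu_\cdot)_t:=\L_{X^{\mu_\cdot}_t}$, a fixed point of $\Phi$ is exactly the law flow of a functional solution of \eqref{E1}, so the task reduces to solving $\mu_\cdot=\Phi(\mu_\cdot)$ in $C([0,T];\scr P_p(\C))$.

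The two quantitative ingredients are It\^o estimates, carried out on $|X(t)|^p$ when $p\ge2$ and on $(|X(t)|^2+\vv)^{p/2}$ with $\vv\downarrow0$ when $p\in[1,2)$. First, for the moment bound, $(A_2)$ with $\eta=0$, $\nu=\dd_0$ together with the local bound on $b(t,0,\dd_0),\si(t,0,\dd_0)$ from $(A_1)$ yields $2\<\xi(0),b(t,\xi,\nu)\>+\|\si(t,\xi,\nu)\|_{\rm HS}^2\le C(1+\|\xi\|_\C^2+\|\nu\|_p^2)$; inserting this, applying Burkholder--Davis--Gundy to the martingale part, and bounding $\|X_t\|_\C^p$ by $\|X_0\|_\C^p$ plus the running supremum of $|X(u)|^p$ before invoking Gronwall gives a bound of the form \eqref{ESTY} (for \eqref{E1} itself one has $\|\nu\|_p^p=\E\|X_t\|_\C^p$ and the estimate closes self-consistently); in particular $\Phi$ does map $C([0,T];\scr P_p(\C))$ into itself. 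Second, for two flows $\mu_\cdot,\nu_\cdot$ sharing the same initial value, the same computation applied to $Z:=X^{\mu_\cdot}-X^{\nu_\cdot}$ — now using the full $(A_2)$ and Young's inequality to dominate the cross term $|Z(t)|^{p-2}\W_p(\mu_t,\nu_t)^2$ by $|Z(t)|^p+\W_p(\mu_t,\nu_t)^p$ — gives, after BDG and Gronwall,
\[
\E\Big(\sup_{s\le t}\|X^{\mu_\cdot}_s-X^{\nu_\cdot}_s\|_\C^p\Big)\le C(t)\int_0^t\W_p(\mu_s,\nu_s)^p\,\d s,\qquad t\in[0,T].
\]

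Because $\W_p(\Phi(\mu_\cdot)_t,\Phi(\nu_\cdot)_t)^p\le\E\|X^{\mu_\cdot}_t-X^{\nu_\cdot}_t\|_\C^p$, this last estimate makes $\Phi$ a contraction on $C([0,T_0];\scr P_p(\C))$ under the metric $\sup_t\W_p$ once $T_0$ (depending only on $K$ and $p$) is small; Banach's fixed point theorem solves $\mu_\cdot=\Phi(\mu_\cdot)$ on $[0,T_0]$, and concatenating over $\lceil T/T_0\rceil$ subintervals produces a functional solution of \eqref{E1} on $[0,T]$ satisfying \eqref{ESTY}. For \eqref{ESTY'}, given two functional solutions $X_{[0,T]},Y_{[0,T]}$ on the same space I set $\mu_t=\L_{X_t}$, $\nu_t=\L_{Y_t}$ and run the difference estimate on the natural coupling $(X_t,Y_t)$, this time absorbing $\W_p(\mu_s,\nu_s)^p\le\E\|X_s-Y_s\|_\C^p$ into the Gronwall term; this gives $\E(\sup_{s\le t}\|X_s-Y_s\|_\C^p)\le C\,\E\|X_0-Y_0\|_\C^p$, hence \eqref{ESTY'} and pathwise uniqueness, so \eqref{E1} is strongly well-posed. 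Weak well-posedness then follows since the marginal laws of any functional solution with $\L_{X_0}=\mu$ (on a possibly different probability space) solve the fixed-point equation and so coincide with the unique fixed point $\mu^*_\cdot$; each such solution thus solves the \emph{same} decoupled path-dependent SDE, which is pathwise — hence weakly — unique, so the law of $X_{[0,T]}$ is determined by $\mu$.

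I expect the two delicate points to be: (i) the range $p\in[1,2)$, where besides regularizing $|X(t)|^p$ one genuinely needs $(A_3)$ — if $\si$ depended on $\mu$ the difference estimate could not be closed, since the quadratic-variation (BDG) contribution of the diffusion difference is intrinsically of order $\|X^{\mu_\cdot}_t-X^{\nu_\cdot}_t\|_\C^2$ while the $p$-Wasserstein control with $p<2$ is too weak to absorb the matching $\W_p(\mu_t,\nu_t)$-terms; and (ii) the memory bookkeeping, i.e.\ systematically replacing instantaneous bounds on $|X(t)|^p$, $|Z(t)|^p$ by their suprema over $[t-r_0,t]$ (equivalently $\|X_t\|_\C^p$, $\|Z_t\|_\C^p$) before each Gronwall step. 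The only deep input is strong existence of the auxiliary monotone path-dependent SDE, which however is already available once the coefficients are smoothed as in \cite{HRW,RW20}; the new content here is merely to make every estimate uniform in $L^p$ for all $p\in[1,\infty)$.
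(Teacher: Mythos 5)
Your proposal is correct and follows essentially the same route as the paper: freeze the law flow, solve the decoupled path-dependent SDE (the paper invokes von Renesse--Scheutzow for this step rather than re-running a Lipschitz approximation), and apply a Banach fixed-point argument to $\mu_\cdot\mapsto \L_{X^{\mu_\cdot}_\cdot}$, with the same It\^o/BDG/Gronwall estimates, the same passage from instantaneous bounds to segment norms, and the same use of $(A_3)$ to drop the $\W_p$-term from the diffusion difference when $p\in[1,2)$. The only cosmetic difference is that the paper obtains the contraction on all of $[0,T]$ at once via the exponentially weighted metric $\W_{p,\lambda}$ with $\lambda$ large, whereas you contract on a short interval and concatenate; these are interchangeable.
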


\begin{proof}  By It\^o's formula and BDG's inequality, it is easy to derive  estimates \eqref{ESTY} and \eqref{ESTY'}  from assumption {\bf (A)}. In particular, the strong uniqueness holds.
Next,  according to \cite[Theorem 2.3]{VS}, the assumption {\bf (A)} implies the  well-posedness of the decoupled SDE with memory: for any  $\mu \in C([0,T]; \scr P_p(\C))$ and $X_0\in L^p(\OO\to\C, \F_0,\P)$,
\begin{equation}\label{EE}
\d Y^\mu(t)= b(t,Y_t^\mu,\mu_t) \d t+\si(t,Y_t^\mu,\mu_t)\d W(t), \quad t>0,\, Y_0^\mu=X_0.
\end{equation}     As shown in the proof of
\cite[Lemma 2.1]{HW20}, the weak well-posedness of \eqref{E1} follows from the strong one. So, it remains to
  prove the strong existence,  for which we use   the  fixed point theorem in the distribution variable as  explained in the proof of \cite[Theorem 3.3]{20HRW}.
 For fixed $T>0$, define
 $$\mathscr D_T=\big\{\mu\in C([0,T];\scr P_p(\C)):\  \mu_0=\L_{X_0}\big\},$$
 which is  a Polish space under the metric
 $$\W_{p,\lambda}(\mu,\nu):=\sup_{0\le t\le T}\big(\e^{-\lambda t}\W_p(\mu_t,\nu_t)\big),\quad \lambda>0.$$
Let
 \begin{equation*}
   (H(\mu))_t:=\L_{Y_t^\mu},\quad t\in[0,T],\mu\in \D_T.
 \end{equation*}
 By the fixed-point theorem,  for the strong existence and uniqueness of \eqref{E1}, it is sufficient to prove the contraction of 
 the mapping $H$ under the metric $ \W_{p,\lambda}$  for large  $\lambda>0;$ that is, we only need   to verify  
 \begin{enumerate}
 \item[(i)]$H:\mathscr D_T\to \mathscr D_T,$
  \item[(ii)] There exist constants $\ll>0$ and   $\aa\in(0,1)$ such that
  \begin{equation*}
 W_{p,\ll}( H(\mu),H(\nu))\le \aa  W_{p,\ll}(  \mu, \nu),\quad \mu,\nu\in\mathscr D_T.
  \end{equation*}
 \end{enumerate}
  Under the assumption {\bf (A)}, (i) follows easily from  It\^o's formula and BDG's inequality. Below we only prove  (ii). 
 For any $\mu,\nu\in \mathscr D_T,$ let $\Psi(t)=Y^\mu(t)-Y^\nu(t), t\in[-r_0,T].$
By It\^o's formula and  \eqref{EE},  we find  a constant $c_1>0$ such that
 \begin{equation}\label{E1E}
 \begin{split}
  \d|\Psi(t)|^p&\le c_1\big\{\|\Psi_t\|_\C^p+\W_p(\mu_t,\nu_t)^p\big\}\d t+\d M(t),
 \end{split}
 \end{equation}
  where
  $$M(t):=p\int_0^t|\Psi(s)|^{p-2}\big\<\Psi(s),(\si(s,Y_s^\mu,\mu_s)-\si(s,Y_s^\nu,\nu_s))\d W(s)\big\>.$$
  By BDG's inequality,  and   when  $p\in [1,2)$  the coefficient $\si(t,\xi,\mu)$  depends only on $(t,\xi)$ so that {\bf (A)} implies
$$\|\si(s,Y_s^\mu,\mu_s)-\si(s,Y_s^\nu,\nu_s)\|_{\rm HS}^2\le K  \|\Psi_t\|^2_\C,$$
we find   constants $c_2,c_3>0$ such that
\beg{align*} \E\Big(\sup_{0\vee(t-r_0)\le s\le t} | M(s) |^p\Big)
&\le  c_2\E\bigg(\int_{0\vee(t-r_0)}^t  | \Psi(s)|^{2(p-1)} \big\{ \| \Psi_s\|^2_\C+1_{\{p\ge 2\}}  \W_p(\mu_s, \nu_s)^2\big\}\d s\bigg)^{\ff 1 2} \\
&\le \ff 1 2 \E\| \Psi_t\|_\C^p + c_3 \int_0^t \big\{\E\| \Psi_s\|^p_\C+ \W_p(\mu_s, \nu_s)^p\big\}\d s.\end{align*}
  This, together with \eqref{E1E} and $Y_0^\mu=Y^\nu_0=X_0$, yields 
  \begin{equation*}
  \E\|\Psi_t\|_\C^p\le c_4\int_0^t \big\{\E\| \Psi_s\|^p_\C+ \W_p(\mu_s, \nu_s)^p\big\}\d s,\ \ t\in [0,T],
 \end{equation*} for some constant $c_4>0.$
  Thus, the Gronwall inequality gives
  \begin{equation*}
  \E\|\Psi_t\|_\C^p\le c_4\e^{c_4 T}\int_0^t  \W_p(\mu_s, \nu_s)^p \d s,\quad t\in[0,T],
 \end{equation*}
  which  implies that for any $\ll>0,$
   \begin{equation*}
   \begin{split}
  \e^{-\ll t}\E\|\Psi_t\|_\C^p&\le c_4\e^{c_4 T} \int_0^t  \e^{-\ll (t-s)}\e^{-\ll s}\W_p(\mu_s, \nu_s)^p \d s\le \ff{c_4\e^{c_4 T}}{\ll}\W_{p,\lambda}(\mu,\nu).
  \end{split}
 \end{equation*}
 Since 
 $$ W_{p,\ll}( H(\mu),H(\nu))\le \sup_{0\le t\le T }\big(\e^{-\ll t}\E\|\Psi_t\|_\C^p\big),$$
this implies  (ii) for $\aa=\ff 1 2$ and  large enough $\ll>0.$ Therefore, the proof is finished.
 \end{proof}

\section{The Malliavin derivative of $X_t^\mu$}
Consider the separable Banach space $\C$ with the uniform norm $\|\xi\|_\C:=\sup_{t\in [-r_0,0]}|\xi(t)|$.
For a G\^ateaux differentiable matrix-valued function $f$ on $\C$, let
$$\|\nn f(\xi)\|=\sup_{\eta\in \C, \|\eta\|_\C\le 1} \|(\nn_\eta f)(\xi)\|_{\rm HS},\ \ \xi\in\C,$$ where
$$(\nn_\eta f)(\xi):=\lim_{\vv\downarrow 0} \ff{f(\xi+\vv \eta)-f(\xi)}{\vv}.$$
Besides {\bf (A)}, we will need the following assumption. A function $f$ on $\C$  is called $C^1$-smooth, denoted by $f\in C^1(\C)$, if it is G\^ateaux differentiable with   derivative $\nn f(\xi)$ continuous in $\xi$. Moreover, if the derivative is   bounded, we write
$f\in C_b^1(\C)$.  It is well known that a function $f\in C^1(\C)$ is Fr\'echet differentiable.

\paragraph{(B)}    Let $p\in [1,\infty)$.   $\si(t,\xi,\mu)$ and $b(t,\xi,\mu)$   are bounded on bounded subsets of $[0,\infty)\times \C\times\scr P_p(\C)$, $C^1$-smooth in $\xi\in\C$  and $L$-differentiable in $\mu\in \scr P_p(\C)$, and satisfy the following conditions.
 \begin{enumerate}
\item[$(B_1)$]      $\{(\nn_\eta \si)(t,\cdot,\mu)\}(\xi)$   is continuous in $(\xi,\eta)\in \C\times\C,$  and there exist  increasing  functions $K_1,K_2 : [0,\infty)\to [0,\infty)$
     such that
$$
 \|\{(\nn b) (t,\cdot,\mu)\}(\xi)\|
  \le  K_1(t) \big\{1  +  \|\xi\|_\C^{\ff{(p-2)^+}2}+  K_2(\|\mu\|_p)  \big\},\ \ (t,\xi,\mu)\in [0,\infty)\times \C\times\scr P_p(\C).$$
\item[$(B_2)$]    $b(t,\xi,\cdot), \si(t,\xi,\cdot)\in C^1(\scr P_{p}(\C))$ with
$$  \sup_{(t,\xi, \mu)\in [0,T]\times \C\times\mathscr P_{p}(\C) }\{\mu(\| D^Lb(t, \xi, \cdot)(\mu)(\cdot)  \|^2_{\C^*})+\mu(\| D^L\si(t, \xi, \cdot)(\mu)(\cdot)  \|^2_{\C^*})\}
<\8,\   T>0.$$
\item[$(B_3)$]  For any $T>0$ there exists a constant $K>0$ such that  for any $t\in [0,T]$,
$$2\<\xi(0),\{(\nn_\xi b)(t,\cdot, \mu)\}(\eta)\>^++  \|\{(\nn_\xi \si)(t,\cdot,\mu)\}(\eta)\|^2_{\rm HS}
\le K\|\xi\|^2_\C,\ \ \xi,\eta\in \C, \mu\in \scr P_p(\C).$$
\item[$(B_4)$] If $p\in [1,2)$, then $\si(t,\xi,\mu)=\si(t,\xi)$ depends only on $t$ and $\xi$, and   there exists an increasing function $K: [0,\infty)\to [0,\infty)$ such that
$$  \|\si(t,\xi,\mu)\|\le K (t) \big(1+\|\xi\|^{\ff p 2}_\C\big),\ \ \xi\in \C.   $$
   \end{enumerate}

Obviously, {\bf (B)} implies {\bf (A)} so that Lemma \ref{Lem01} applies.
For any $T>0$, set $\C_T:=C([0,T];\R^m)$ and consider the  Cameron-Martin space
\begin{equation*}
\mathcal {H}=\bigg\{h\in \C_T\Big|h(0)={\bf 0}, \dot h(t) \mbox{ exists
a.e. } t,  \|h\|_{\mathcal H}:= \bigg(\int_0^T |\dot h(t)|^2\d t \bigg)^{\ff 1 2} <\infty\bigg\}.
\end{equation*}

By the pathwise uniqueness of \eqref{E1}, we may regard  $ X^\mu_t$ as a $\C$-valued function of $X_0^\mu$ and $W$, and investigate its Malliavin derivative w.r.t.  the Brownian motion $W$.
For any   $h\in L^\infty(\OO\to\mathcal H,\P)$ and $\vv\ge 0$,  consider the SDE
\begin{equation}\label{EE43}\beg{split}
&\d X^{h,\vv,\mu}(t)=\big\{b(t,X^{h,\vv,\mu}_t,  \mu_t)+\vv\si(t,X^{h,\vv,\mu}_t,  \mu_t)\dot{h}(t)\big\} \d t+
\si(t,X^{h,\vv,\mu}_t,\mu_t)\d    W(t),\\
&t\in [0,T], X_0^{h,\vv,\mu}=X_0^\mu, \mu_t:= \L_{X_t^\mu}.\end{split}
\end{equation}
When $h$ is adapted, according to the proof of Lemma \ref{Lem01}, assumption {\bf (A)} implies the existence and uniqueness of this SDE. 

The directional Malliavin derivative
of $X^\mu(t)$ along $h$ is given by
$$D_h X^\mu(t):= \lim_{\vv\to 0} \ff{ X^{h,\vv,\mu}(t)-X^\mu(t)} \vv$$ provided the limit exists in $L^2(\OO\to C([0,T];\R^d),\P)$.
To prove the existence of this limit, we first present the following lemma.

\begin{lem}\label{Lem1}
 Assume {\bf (A)} and let  $(B_4)$ hold if $p\in [1,2).$
 Let   $h\in L^\infty(\OO\to \mathcal {H},\P)$ which is adapted if $\si(t,\xi,\mu)$ depends on $\xi$, and let $X_0\in L^p(\OO\to\C,\F_0,\P)$. Then     there exists a constant $c>0$ such that
 \begin{equation}\label{EE5}
\E\Big(\sup_{0\le t\le T}\|X_t^{ h,\vv,\mu}-X_t^\mu\|^{2\lor p}_\C\Big)\le
c\,\vv^{2\lor p},\ \ \vv\in [0,1].
\end{equation}
\end{lem}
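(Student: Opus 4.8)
The plan is to estimate $\|X_t^{h,\vv,\mu}-X_t^\mu\|_\C$ by applying It\^o's formula to $|\Psi^\vv(t)|^{2\lor p}$ where $\Psi^\vv(t):=X^{h,\vv,\mu}(t)-X^\mu(t)$, exactly as in the proof of Lemma \ref{Lem01}, but now keeping track of the extra drift term $\vv\si(t,X_t^{h,\vv,\mu},\mu_t)\dot h(t)$. First I would subtract the two SDEs \eqref{EE43} (with $\vv$) and \eqref{E1} (equivalently \eqref{EE} with the frozen measure flow $\mu_t=\L_{X_t^\mu}$), obtaining
$$\d\Psi^\vv(t)=\big\{b(t,X_t^{h,\vv,\mu},\mu_t)-b(t,X_t^\mu,\mu_t)+\vv\si(t,X_t^{h,\vv,\mu},\mu_t)\dot h(t)\big\}\d t+\big\{\si(t,X_t^{h,\vv,\mu},\mu_t)-\si(t,X_t^\mu,\mu_t)\big\}\d W(t),$$
with $\Psi^\vv_0=0$. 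Since the measure argument is the \emph{same} $\mu_t$ in both equations, the Wasserstein terms in {\bf (A)} drop out and the monotonicity condition $(A_2)$ directly controls the local-martingale-free part: $2\langle\Psi^\vv(t),b(t,X_t^{h,\vv,\mu},\mu_t)-b(t,X_t^\mu,\mu_t)\rangle^+ + \|\si(t,X_t^{h,\vv,\mu},\mu_t)-\si(t,X_t^\mu,\mu_t)\|_{\rm HS}^2\le K\|\Psi^\vv_t\|_\C^2$.

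Next I would handle the new term $\vv\langle\Psi^\vv(t),\si(t,X_t^{h,\vv,\mu},\mu_t)\dot h(t)\rangle$. Because $h\in L^\infty(\OO\to\mathcal H,\P)$, we have $\int_0^T|\dot h(t)|^2\,\d t\le\|h\|_{L^\infty}^2$ a.s.; combined with the growth bound on $\si$ — the boundedness on bounded sets from $(A_1)$ together with the estimate \eqref{ESTY} on $\E(\sup_{s\le T}\|X_s^\mu\|_\C^p)$, or, when $p\in[1,2)$, the explicit growth bound $\|\si(t,\xi)\|\le K(t)(1+\|\xi\|_\C^{p/2})$ from $(B_4)$ which is tailored to absorb the $p$-th moment — Young's inequality gives $2\vv|\langle\Psi^\vv(t),\si\dot h(t)\rangle|\le \|\Psi^\vv_t\|_\C^{2\lor p}+C\vv^{2\lor p}(1+\|X_t^{h,\vv,\mu}\|_\C^p)|\dot h(t)|^{2\lor p}$ (or the analogous split with the power $2$ when $p\le 2$, then raising to the power $p/2$). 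For the $|\Psi^\vv(t)|^p$ version when $p>2$, the It\^o correction and the $|\Psi^\vv(t)|^{p-2}$ weights are dealt with exactly as in Lemma \ref{Lem01}, using BDG to move the stochastic integral's supremum to the left-hand side at the cost of a factor $\tfrac12\E\sup_{s\le t}\|\Psi^\vv_s\|_\C^p$ plus a time integral of $\E\|\Psi^\vv_s\|_\C^p$. Collecting terms yields
$$\E\Big(\sup_{0\le s\le t}\|\Psi^\vv_s\|_\C^{2\lor p}\Big)\le C\int_0^t\E\Big(\sup_{0\le u\le s}\|\Psi^\vv_u\|_\C^{2\lor p}\Big)\d s + C\vv^{2\lor p}\Big(1+\E\sup_{0\le s\le T}\|X_s^{h,\vv,\mu}\|_\C^p\Big)\|h\|_{L^\infty}^{2\lor p},$$
and Gronwall's inequality finishes the bound, provided the prefactor $\E\sup_{s\le T}\|X_s^{h,\vv,\mu}\|_\C^p$ is bounded uniformly in $\vv\in[0,1]$.

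That uniform-in-$\vv$ moment bound on the perturbed solution is the one genuinely new ingredient, and I expect it to be the main (though still routine) obstacle: one must redo the a priori estimate \eqref{ESTY} for \eqref{EE43}, checking that the extra drift $\vv\si(t,X_t^{h,\vv,\mu},\mu_t)\dot h(t)$ does not spoil the $p$-th moment control. Applying It\^o to $|X^{h,\vv,\mu}(t)|^p$, the monotonicity/growth from $(A_1)$–$(A_2)$ (plus $(B_4)$ when $p<2$) controls the standard terms, while the new term contributes $p\vv|X^{h,\vv,\mu}(t)|^{p-1}\|\si(t,X_t^{h,\vv,\mu},\mu_t)\|\,|\dot h(t)|\le C(1+\|X_t^{h,\vv,\mu}\|_\C^p)(1+|\dot h(t)|^2)$ after Young, where the $\|\dot h\|_{L^2}$-bound makes $\int_0^T|\dot h(t)|^2\d t$ a finite a.s. constant; Gronwall then gives $\E\sup_{s\le T}\|X_s^{h,\vv,\mu}\|_\C^p\le C(1+\E\|X_0\|_\C^p)$ with $C$ independent of $\vv\in[0,1]$. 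Here the adaptedness hypothesis on $h$ (when $\si$ is genuinely path-dependent) is what makes the stochastic integral $\int_0^t\si\,\d W$ an honest martingale so that BDG applies; when $\si$ is distribution-path independent in the sense used later, $h$ need not be adapted. With this uniform moment bound in hand, the estimates above go through and \eqref{EE5} follows.
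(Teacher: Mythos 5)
Your overall strategy coincides with the paper's: apply It\^o's formula to the $q:=2\vee p$ power of the difference process, use that both equations carry the same measure flow $\mu_t=\L_{X_t^\mu}$ so that $(A_2)$ applies without any Wasserstein term, control the stochastic integral by BDG, and close with Gronwall after securing a uniform-in-$\vv$ $p$-th moment bound for the perturbed equation \eqref{EE43}. (The paper localizes with a stopping time $\tau_n$ rather than proving that moment bound as a separate step, but this is a cosmetic difference; your explicit treatment of the uniform bound is if anything more careful.)

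There is, however, one step that fails as written when $p>2$. Your pointwise Young inequality for the new drift term produces the factor $|\dot h(t)|^{2\vee p}$, so you would need $\int_0^T|\dot h(t)|^{2\vee p}\,\d t$ to be finite and controlled by $\|h\|_{\mathcal H}$. But $h\in L^\infty(\OO\to\mathcal H,\P)$ only gives $\int_0^T|\dot h(t)|^{2}\,\d t\le \|h\|^2_{L^\infty(\OO\to\mathcal H,\P)}$ a.s.; for $p>2$ the integral $\int_0^T|\dot h(t)|^{p}\,\d t$ may be infinite. (Note also that the displayed bound $2\vv|\langle\Psi^\vv(t),\si\dot h(t)\rangle|\le\|\Psi^\vv_t\|_\C^{q}+C\vv^{q}(\cdots)|\dot h(t)|^{q}$ is not a legitimate Young inequality for $q>2$; the relevant It\^o term is $q|\Psi^\vv(t)|^{q-2}\vv\langle\Psi^\vv(t),\si\dot h(t)\rangle$, and splitting it by Young still leaves $|\dot h|^{q}$.) The paper circumvents this by first applying Cauchy--Schwarz in the time variable, namely $\int_0^t|Z^{h,\vv}(s)|^{q-1}\|\si\|\,|\dot h(s)|\,\d s\le\|h\|_{\mathcal H}\big(\sup_{s\le t}|Z^{h,\vv}(s)|^{2(q-1)}\int_0^t\|\si\|^2\d s\big)^{1/2}$, so that only $\int_0^T|\dot h|^2\d t$ ever appears, and then using Young's inequality on the resulting expectation to peel off $\tfrac14\E\sup_{s\le t}|Z^{h,\vv}(s)|^{q}$ plus a term controlled by the $p$-th moment of $X^{h,\vv,\mu}$. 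With this repair, and no other change, your argument goes through.
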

\begin{proof} Below, we only consider the case that $h$ is adapted and $\si(t,\xi,\mu)$ depends on $\xi$, since the proof for the setup that $\si(t,\xi,\mu)$ is independent of $\xi$ is even simpler.

 Let $Z^{h,\vv}(t)= \ff{X^{h,\vv,\mu}(t)- X^\mu(t)}\vv$ and
  $$\tau_n=\inf\big\{t\ge 0: \|X_t^\mu\|_\C+\|X_t^{h,\vv,\mu}\|_\C\ge n\big\},\ \ n\ge 1.$$
By \eqref{E1} and \eqref{EE43}, we  have
\begin{equation}\label{EE4}\beg{split}
\d Z^{h,\vv}(t)
=&\,\Big\{\ff{b(t,X_t^{h,\vv,\mu},\mu_t)-b(t,X_t^\mu,\mu_t)}\vv +\si(t,X_t^{h,\vv,\mu},\mu_t)\dot{h}(t)
\Big\}\d t\\
&+\ff{\si(t,X_t^{h,\vv,\mu},\mu_t)-\si(t,X_t^\mu,\mu_t)}\vv \d W(t),\ \ Z_0^{h,\vv}={\bf0}.\end{split}
\end{equation}
Applying It\^o's formula and taking {\bf (A)} and $Z_0^\vv={\bf0}$
into account yields, for $q:=2\lor p$,
\beq\label{*PB}
\begin{split}
|Z^{h,\vv}(t\land\tau_n)|^q&\le \ff q 2 \int_0^{t\land\tau_n} \Big\{\ff{2}{\vv}\<Z^{h,\vv}(s),b(s,X_s^{h,\vv,\mu},\mu_s)-b(s,X_s^\mu,\mu_s)\>
\\
&\quad+\ff{q-1}{ \vv^2}\|\si(s,X_s^{h,\vv,\mu},\mu_s)-\si(s,X_s^\mu,\mu_s)\|_{\rm
HS}^2\Big\}\d
s +N^\vv(t)+M^\vv(t)\\
&\le c\int_0^{t\land\tau_n}\|Z_s^{h,\vv}\|_\C^q\d s+N^{\vv}(t)+
M^{\vv}(t),
\end{split}
\end{equation} for some constant $c>0$,
where, by setting $r^0=1$ for $r\in [0,\infty)$  in case of $p=1$,
\begin{equation*}
\begin{split}
N^\vv(t):&=q\int_0^{t\land\tau_n}|Z^{h,\vv}(s)|^{q-1} |\si(s,X_s^{h,\vv,\mu},\mu_s)\dot{h}(s)|\d
s,\\ 
M^\vv(t):&=\ff{q}{\vv}\int_0^{t\land\tau_n} |Z^{h,\vv}(s)|^{q-2}\<Z^{h,\vv}(s),(\si(s,X_s^{h,\vv,\mu},\mu_s)-\si(s,X_s^\mu,\mu_s))\d W(s)\>.
\end{split}
\end{equation*}
Let $\psi>0$ be a constant such that $\|h\|_{\mathcal H} \le \psi$ due to $h\in L^\infty(\OO\to \mathcal {H},\P)$.   By   H\"older's
 and   Young's inequalities,   Lemma \ref{Lem01}, {\bf (A)} and  $(B_4)$ when $p\in [1,2)$, we find   constants $c_0, c_1>0$    such that
 \begin{equation}\label{EE1}
\begin{split}
\E\Big(\sup_{0\le s\le t\land\tau_n} | N^\vv(s)|\Big)
&\le q \psi \E\Big(\sup_{0\le s\le t\land\tau_n}|Z^{h,\vv}(s)|^{2(q-1)}\int_0^{t\land\tau_n}   \|\si(s,X_s^{h,\vv,\mu},\mu_s)\|^2\d s\Big)^{1/2}\\
&\le\ff{1}{4}\E\Big(\sup_{0\le s\le t\land\tau_n}|Z^{h,\vv}(s)|^{q}\Big)+  c_0 \E \bigg(\int_0^{t} (1+\|X_s^{h,\vv,\mu}\|_\C^{2\land p}) \d s\bigg)^{\ff{2\lor p}2}\\
&\le \ff{1}{4}\E\Big(\sup_{0\le s\le t\land\tau_n}|Z^{h,\vv}(s)|^{2}\Big)+ c_1,\ \ t\in [0,T]. \end{split}
\end{equation}   By   {\bf (A)} and the  BDG  inequality, there exist constants $c_2,c_3>0$ such that
\begin{equation}\label{EE2}
\begin{split}
\E\Big(\sup_{0\le s\le t\land\tau_n} | M^\vv(s)|\Big)
&\le c_2\E\Big(\sup_{0\le s\le t\land\tau_n}\|Z^{h,\vv}_s\|^q_\C\int_0^{t\land\tau_n}\|Z^{h,\vv}_s\|^q_\C\Big)^{1/2}\\
&\le\ff{1}{4}\E\Big(\sup_{0\le s\le t\land\tau_n}\|Z^{h,\vv}_s\|^q_\C\Big)+c_3 \int_0^t \E\|Z^{h,\vv}_{s\land\tau_n} \|^q_\C\d s.
\end{split}
\end{equation}
 Combining \eqref{*PB}-\eqref{EE2}, we find a constant  $ c>0$ such that
\begin{equation*}
\E\Big(\sup_{0\le s\le t\land\tau_n}\|Z^{h,\vv}_s\|^q_\C\Big)\le c
+c\int_0^t\E\|Z^{h,\vv}_{s\land\tau_n}\|^q_\C\d s<\infty,\ \ t\in [0,T], \  \vv\in [0,1].
\end{equation*}
   By   applying Gronwall's inequality  followed by  letting $n\to\infty$, we derive \eqref{EE5}.
\end{proof}

\begin{lem}\label{Lem5}  Assume     {\bf (B)}. For any     $X_0^\mu\in L^{p}(\OO\to\C,\F_0, \P)$ and
   $h\in L^\infty(\OO\to\mathcal H,\P)$ which is adapted if $\si(t,\xi,\mu)$ depends on $\xi$, the limit
\beq\label{ELL} D_h X_t^\mu:= \lim_{\vv\downarrow 0} \ff{X_t^{h,\vv,\mu}-X_t^\mu}\vv,\ \ t\in [0,T]\end{equation} exists in $L^2(\OO\to C([0,T];\C),\P)$, and it is the unique solution of the
 following   SDE with memory
\begin{equation}\label{EE3}
\begin{split}
\d w^h(t)&=\big\{\{(\nn_{w^h_t}b)(t,\cdot,\mu_t)\}(X_t^\mu) +\si(t, X^\mu_t, \mu_t)\dot{h}(t)\big\}\d
t\\
&+\{(\nn_{w^h_t}\si) (t, \cdot,\mu_t)\}(X_t^\mu)\d W(t),~t\in [0,T],~w_0^h={\bf 0}, \mu_t:=\L_{X_t^\mu}.
\end{split}
\end{equation}
\end{lem}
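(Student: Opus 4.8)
The plan is to show that the rescaled difference $Z^{h,\vv}(t):=\vv^{-1}(X_t^{h,\vv,\mu}-X_t^\mu)$ converges in $L^2(\OO\to C([0,T];\C),\P)$ to the solution $w^h$ of \eqref{EE3}, and that \eqref{EE3} is itself well-posed. First I would record that \eqref{EE3} is a \emph{linear} (inhomogeneous) SDE with memory whose coefficients are $\xi\mapsto\{(\nn_\xi b)(t,\cdot,\mu_t)\}(X_t^\mu)$ and $\xi\mapsto\{(\nn_\xi\si)(t,\cdot,\mu_t)\}(X_t^\mu)$; these are linear in the segment argument, and by $(B_1)$, $(B_3)$ together with the moment bound $\E\sup_{s\le T}\|X_s^\mu\|_\C^p<\8$ from Lemma \ref{Lem01}, the monotonicity/growth conditions needed to invoke the well-posedness result of \cite{VS} (exactly as in the proof of Lemma \ref{Lem01}) are satisfied almost surely along the fixed path $X^\mu$, so \eqref{EE3} has a unique solution with $\E\sup_{t\le T}\|w^h_t\|_\C^2<\8$. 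The driving term $\si(t,X^\mu_t,\mu_t)\dot h(t)$ lies in $L^2$ since $\si$ is bounded on bounded sets (or satisfies the growth bound in $(B_4)$ when $p<2$) and $h\in L^\infty(\OO\to\H,\P)$.

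Next I would set up the difference equation. Write $R^{h,\vv}(t):=Z^{h,\vv}(t)-w^h(t)$; subtracting \eqref{EE3} from \eqref{EE4} and using the fundamental theorem of calculus,
$$\ff{b(t,X_t^{h,\vv,\mu},\mu_t)-b(t,X_t^\mu,\mu_t)}\vv=\int_0^1\{(\nn_{Z^{h,\vv}_t}b)(t,\cdot,\mu_t)\}\big(X_t^\mu+\theta(X_t^{h,\vv,\mu}-X_t^\mu)\big)\,\d\theta,$$
and similarly for $\si$, one gets a linear equation for $R^{h,\vv}$ driven by two error sources: (a) the difference between $\nn b$, $\nn\si$ evaluated at the interpolated point $X_t^\mu+\theta(X_t^{h,\vv,\mu}-X_t^\mu)$ and at $X_t^\mu$, acting on $Z^{h,\vv}_t$, and (b) the genuinely linear part acting on $R^{h,\vv}_t$ itself. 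For (b) one uses the monotonicity condition $(B_3)$ to get a Gronwall-type bound; for (a) one uses continuity of $\{(\nn_\eta b)(t,\cdot,\mu)\}(\xi)$ and $\{(\nn_\eta\si)(t,\cdot,\mu)\}(\xi)$ in $\xi$ (from $(B_1)$), the uniform estimate $\E\sup_{t\le T}\|X_t^{h,\vv,\mu}-X_t^\mu\|_\C^{2\lor p}\le c\vv^{2\lor p}$ from Lemma \ref{Lem1} (hence $\E\sup_{t\le T}\|Z^{h,\vv}_t\|_\C^{2}\le c$ uniformly in $\vv$), and dominated convergence to conclude the contribution of (a) tends to $0$ in $L^2$ as $\vv\downarrow0$. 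I would run this argument up to the stopping time $\tau_n=\inf\{t:\|X_t^\mu\|_\C+\|X_t^{h,\vv,\mu}\|_\C\ge n\}$, apply It\^o's formula to $|R^{h,\vv}(t\land\tau_n)|^2$, use the BDG inequality on the martingale part as in the proof of Lemma \ref{Lem1}, absorb the $\sup$ term, and get
$$\E\Big(\sup_{s\le t\land\tau_n}\|R^{h,\vv}_s\|_\C^2\Big)\le \alpha_n(\vv)+c\int_0^t\E\|R^{h,\vv}_{s\land\tau_n}\|_\C^2\,\d s,$$
with $\alpha_n(\vv)\to0$ as $\vv\downarrow0$ for each fixed $n$; Gronwall then gives $\E\sup_{s\le T\land\tau_n}\|R^{h,\vv}_s\|_\C^2\to0$, and finally letting $n\to\8$ (controlling $\P(\tau_n<T)\to0$ by the uniform moment bounds on $X^\mu$ and $X^{h,\vv,\mu}$) yields convergence without the stopping time. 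This establishes both the existence of the limit \eqref{ELL} in $L^2(\OO\to C([0,T];\C),\P)$ and that it equals $w^h$, i.e. \eqref{EE3}.

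The main obstacle I anticipate is the interchange of limits hidden in handling term (a): one must pass $\vv\downarrow0$ inside the $\d\theta$-integral and inside the spatial argument of $\nn b$, $\nn\si$ while these are only continuous (not Lipschitz) in $\xi$ and while the growth of $\nn b$ in $(B_1)$ involves $\|\xi\|_\C^{(p-2)^+/2}$, so one needs the $L^{2\lor p}$ (not merely $L^2$) control from Lemma \ref{Lem1} to get a uniformly integrable dominating function before invoking dominated convergence. A secondary technical point is that $Z^{h,\vv}_t$ appears \emph{multiplied} by the difference of the gradients, so one estimates the error by H\"older's inequality, splitting off $\|Z^{h,\vv}_t\|_\C$ in $L^{2}$ (or $L^{2\lor p}$) and the gradient difference, which tends to $0$ in the complementary norm; care is needed here when $p>2$ so that the exponents match. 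Once this is in place, the Gronwall step and the localization removal are routine, following verbatim the structure already used in Lemmas \ref{Lem01} and \ref{Lem1}.
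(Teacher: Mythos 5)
Your proposal follows essentially the same route as the paper's proof: the same decomposition $\Lambda^{h,\vv}=Z^{h,\vv}-w^h$, the same fundamental-theorem-of-calculus representation of the coefficient increments acting on $Z^{h,\vv}_t$ versus the linear part acting on the remainder, It\^o plus BDG plus Gronwall under $(B_3)$, and the same reliance on the $L^{2\lor p}$ bound of Lemma \ref{Lem1} for uniform integrability and dominated convergence when $p>2$. The only bookkeeping item you omit is the additional error term $\big(\si(t,X_t^{h,\vv,\mu},\mu_t)-\si(t,X_t^\mu,\mu_t)\big)\dot h(t)$ arising from the forcing term, which is controlled by exactly the tools you already invoke ($(B_3)$, Lemma \ref{Lem1} and $h\in L^\infty(\OO\to\mathcal H,\P)$), so this does not affect the argument.
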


\begin{proof} By  $(B_3)$ and  the boundedness of $\si$ due to ($B_1$),  for any adapted $h\in L^2(\OO\to\mathcal H,\P)$, the SDE \eqref{EE3} has a unique solution in $L^2(\OO\to C([0,T];\C),\P)$
and  for some constant $C>0$,
\beq\label{FF}  \E\Big(\sup_{0\le t\le T}\|w^h_t\|_\C^2\Big)\le C  \E\|h\|_{\mathcal H}^2<\infty.\end{equation}
So,  it remains to   prove that the limit in \eqref{ELL}   exists in $L^2(\OO\to C([0,T];\C),\P)$,
and it solves \eqref{EE3}.
 Let  $\Lambda^{h,\vv}(t) =Z^{h,\vv}(t)-w^h(t),$ where $Z^{h,\vv}(t) := \ff{X^{ h,\vv,\mu}(t)-X^\mu(t)}\vv$ as before.  Then,
it suffices to verify
\begin{equation}\label{W1}
\lim_{\vv\to 0} \E\Big(\sup_{0\le t\le T}|\Lambda^{h,\vv}(t)|^2\Big)=0.  \end{equation}
Observe that \eqref{EE5} and \eqref{FF} imply
\beq\label{JH} \E\Big(\sup_{0\le t\le T}|\Lambda^{h,\vv}(t)|^2\Big)<\infty.\end{equation}
By \eqref{EE4} and  \eqref{EE3},  we have
\begin{equation}
\label{ELL2}
\begin{split}
 \d\Lambda^{h,\vv}(t)& = \big\{ \{(\nn_{\Lambda_t^\vv} b)(t,\cdot,\mu_t)\}(X_t^\mu)+\Gamma_1^\vv(t) \big\}\d
t\\
&\quad+ \big\{\{(\nn_{\Lambda_t^\vv} \si)(t,\cdot,\mu_t)\}(X_t^\mu)+\Gamma_2^\vv(t)\big\}\d W(t),
\end{split}
\end{equation}
where
\begin{equation} \label{*WQ}
\begin{split}
\Gamma_1^\vv(t):&=  \big(\si(t,X_t^{h,\vv,\mu},\mu_t)-\si(t,X_t^\mu,\mu_t)\big)\dot{h}(t)\\
 &\quad+   \int_0^1\big\{
\{(\nn_{Z_t^{h,\vv}}b)(t,\cdot,\mu_t)\}(X_t^\mu+\theta(X_t^{h,\vv,\mu}-X_t^\mu))-\{(\nn_{Z_t^{h,\vv}}b) (t,\cdot,\mu_t)\}(X_t^\mu) \big\}\d\theta\\
\Gamma_2^\vv(t):&=  \int_0^1 \big\{ \{(\nn_{Z_t^{h,\vv}}\si)(t,\cdot,\mu_t)\}(X_t^\mu+\theta(X_t^{h,\vv,\mu}-X_t^\mu))
-\{(\nn_{Z_t^{h,\vv}}\si)(t,\cdot,\mu_t)\}(X_t^\mu) \big\}\d\theta.
\end{split}
\end{equation} Obviously, when  $\si(t,\xi,\mu)=\si(t,\mu)$ does not  depend on $\xi$, the noise term in \eqref{ELL2} disappears so that the SDE reduces to an ODE for which we can allow $h$ to be non-adapted.
Applying It\^o's formula yields
\begin{equation*}
\begin{split}
 |\Lambda^{h,\vv}(t)|^2&\le\int_0^t\big\{2\<\Lambda^{h,\vv}(s),\{(\nn_{\Lambda^{h,\vv}_s} b)(s,\cdot,\mu_s)\}(X_s^\mu) \>+2\|\{(\nn_{\Lambda^{h,\vv}_s}
\si)(s,\cdot,\mu_s)\}(X_s^\mu)\|_{\rm HS}^2\big\}\d s\\
 &\quad +2\int_0^t\big\{\<\Lambda^{h,\vv}(s), \Gamma^\vv_1(s) \>+\| \Gamma^\vv_2(s)\|_{\rm HS}^2\big\} \d s\\
 &\quad+2\int_0^t\big\<\Lambda^{h,\vv}(s),\{\{(\nn_{\Lambda^{h,\vv}_s}
\si)(s,\cdot,\mu_s)\}(X_s^\mu)+\Gamma^\vv_2(s)\}\d W(s)\big\>\\
&=:\Upsilon_1^\vv(t)+\Upsilon_2^\vv(t)+\Upsilon_3^\vv(t).
\end{split}
\end{equation*}
Obviously,   $(B_3)$ implies
\begin{equation}\label{F1}
\E\Big(\sup_{0\le s\le t}\Upsilon_1^\vv(s)\Big)\le
3K\int_0^t\E\|\Lambda^{h,\vv}_s\|_\C^2\d s,
\end{equation}
while Cauchy-Schwarz's inequality gives
\begin{equation}\label{F2}
\begin{split}
\E\Big(\sup_{0\le s\le
t}|\Upsilon_2^\vv(s)|\Big)&\le\int_0^t\big\{2\E|\Lambda^{h,\vv}(s)|^2+\E|\Gamma^\vv_1(s)|^2+2\,\E\|
\Gamma^\vv_2(s)\|_{\rm HS}^2\big \}\d s.
\end{split}
\end{equation}
Next, by   $(B_3)$ and   BDG's inequality,  we find constants $c_1,c_2>0$ such
that
\begin{equation}\label{F3}
\begin{split}
\E\Big(\sup_{0\le s\le t}\Upsilon_3^\vv(s)\Big)&\le
c_1\E\Big(\sup_{0\le s\le
t}|\Lambda^{h,\vv}(s)|^2\int_0^t\big\|\{(\nn_{\Lambda^{h,\vv}_s}
\si)(s,\cdot,\mu_s)\}(X_s^\mu)+\Gamma^\vv_2(s)\big\|^2\d s\Big)^{1/2}\\
&\le\ff{1}{2}\E\Big(\sup_{0\le s\le
t}|\Lambda^{h,\vv}(s)|^2\Big)+c_2\int_0^t\big\{\E\|\Lambda^{h,\vv}_s\|_\C^2
+\E\|\Gamma^\vv_2(s)\|^2\big\}\d s.
\end{split}
\end{equation}
 Combining \eqref{F1},
\eqref{F2} with  \eqref{F3}, there exists  a constant $c_3>0$ such that
\begin{equation*}
\E\Big(\sup_{0\le s\le t}|\Lambda^{h,\vv}(s)|^2\Big)\le
c_3\int_0^t\E\|\Lambda^{h,\vv}_s\|_\C^2\d
s+c_3\int_0^t\big\{\E|\Gamma^\vv_1(s)|^2+\E\| \Gamma^\vv_2(s)\|_{\rm
HS}^2 \big\}\d s.
\end{equation*}
 By Gronwall's inequality and \eqref{JH}, this implies
\begin{equation}\label{W2}
\E\Big(\sup_{0\le s\le t}|\Lambda^{h,\vv}(s)|^2\Big)\le
c_3\e^{c_3t}\E \int_0^t \big\{|\Gamma^\vv_1(s)|^2 + \|\Gamma^\vv_2(s)\|_{\rm HS}^2\big\} \d
s.\end{equation}
Moreover,  by    \eqref{*WQ},   we have
\beq\label{KB1} |\Gamma^\vv_1(t)|^2+\| \Gamma^\vv_2(t)\|_{\rm HS}^2
\le I_\vv(t)|\dot h(t)|^2 + J_\vv(t) \|Z_t^{h,\vv}\|_\C^2,\end{equation}
where according to $(B_1)$ and $(B_3)$ we find a constant $c(T)>0$ increasing in  $T$ such that
\beg{align*} & I_\vv(t):= 2\|\si(t, X_t^{h,\vv,\mu},\mu_t)- \si(t, X_t^{\mu},\mu_t)\|^2,\\
&J_\vv(t):= 2 \int_0^1\big\{ \big\| \{(\nn b)(t,\cdot,\mu_t)\}(X_t^\mu+\theta(X_t^{h,\vv,\mu}-X_t^\mu))
-\{(\nn b)(t,\cdot,\mu_t)\}(X_t^\mu) \|^2\\
&\quad\qquad\qquad\quad+\| \{(\nn \si)(t,\cdot,\mu_t)\}(X_t^\mu+\theta(X_t^{h,\vv,\mu}-X_t^\mu))
-\{(\nn \si)(t,\cdot,\mu_t)\}(X_t^\mu) \|^2\big\} \d\theta\\
&\quad\quad  \le c(T) \big(1+\|X_t^\mu\|_\C^{p-2}+ \|X_t^{h,\vv,\mu}-X_t^\mu\|_\C^{p-2}+K_2(\|\mu_t\|_p^2)\big),\ \  t\in [0,T].\end{align*}
By  $(B_3)$, and \eqref{EE5} and $h\in L^\infty(\OO\to\mathcal H,\P)$, we obtain
\beq\label{KB2} \limsup_{\vv\to 0} \E\int_0^T I_\vv(t) |\dot h(t)|^2\d t\le 2K \|h\|_{L^\infty(\OO\to\mathcal H,\P)}^2  \limsup_{\vv\to 0} \E\Big[\sup_{t\in [0,T]} \|X_t^{h,\vv,\mu}-X_t^\mu\|_\C^2\Big]=0.\end{equation}
Below we complete the proof of \eqref{W1} by considering two different cases.

(1) When $p>2$, \eqref{ESTY} and \eqref{EE5} imply that $\{\|Z_t^{h,\vv}\|_\C^2 (1+ \|X_t^\mu\|_\C^{p-2})\}_{\vv\in [0,1]}$ is uniformly integrable in $L^1(\P)$ and
$$\E [ \|Z_t^{h,\vv}\|_\C^2\|X_t^{h,\vv,\mu}- X_t^\mu\|_\C^{p-2} ]=\vv^{p-2}\E\|Z_t^{h,\vv}\|_\C^p\le c\, \vv^{p-2} \to 0  ~~~~~\mbox{ as } \vv\to0.$$
Then, by the dominated convergence theorem, \eqref{EE5} and  $J_\vv(t)\to 0$ in probability, we arrive at
$$\lim_{\vv\to 0} \E \int_0^T J_\vv (t)\|Z_t^{h,\vv}\|_\C^2 \d t =0.$$ This, together with \eqref{KB1} and \eqref{KB2}, implies
\beq\label{XTD} \lim_{\vv\to 0}\E \int_0^T \big\{|\Gamma^\vv_1(t)|^2 + \|\Gamma^\vv_2(t)\|_{\rm HS}^2\big\}\d t =0\end{equation}
so that  \eqref{W1} follows from  \eqref{W2}.

(2) When $p\in [1,2]$,  $(B_1)$  and \eqref{ESTY} imply $J_\vv(t)\le K$ for some constant $K$ depending on $T$.  Then,
\begin{equation}\label{bb}\E   \int_0^t \big\{|\Gamma^\vv_1(s)|^2 + \|\Gamma^\vv_2(s)\|_{\rm HS}^2\big\}\d s\le \vv_T + 2 K\int_0^t \|\LL_s^{h,\vv}\|_\C^2\d s,\ \ t\in [0,T],\end{equation}
where, by the dominated convergence theorem,
 $$ \vv_T:= \int_0^T \E\big\{ I_\vv(t) |\dot h(t)|^2 +J_\vv(t) \|w_t^h\|_\C^2\big\}\d t\to 0\ ~~~\text{as}\ \vv\to 0.$$
Substituting \eqref{bb} into \eqref{W2} and using Gronwall's lemma, we derive
$$\lim_{\vv\to 0} \E\Big(\sup_{0\le t\le T}|\Lambda^{h,\vv}(t)|^2\Big)\le \lim_{\vv\to 0} \vv_T\e^{(c_3+2K)T} =0.$$
Therefore, \eqref{W1} holds.
 \end{proof}

 Let $(D,\D(D))$ be the Malliavin gradient with adjoint (i.e., Malliavin divergence) $(D^*,\D(D^*))$. Then,
 \beq\label{ITP} \E[D_h F]=\E[F D^*(h)],\ \ F\in \D(D), h\in \D(D^*).\end{equation}
 In particular, if $h\in L^2(\OO\to\mathcal H,\P)$ is adapted, then $h\in \D(D^*)$ and
 \beq\label{ITP'} D^*(h)=\int_0^T\<\dot h(t), \d W(t)\>,\end{equation}
see, for example, \cite{David}.

\beg{prp}\label{P4.3} Assume   {\bf (B)}. For any   $h\in \D(D^*)$ which is adapted if $\si(t,\xi,\mu)$ depends on $\xi$, $\eqref{EE3}$ has a unique functional solution satisfying $\eqref{FF}$ for some constant $C>0$, and for any $f\in C_b^1(\C)$,
\beq\label{*WPB} \E\big[(\nn_{w^h_T} f)(X_T^\mu)\big]= \E\big[f(X_T^\mu) D^*(h)\big].\end{equation}
\end{prp}

\beg{proof} As explained in   the proof of Lemma \ref{Lem5}, the first assertion follows from assumptions {\bf (A)} and {\bf (B)}.  So it suffices to prove \eqref{*WPB}.

We first consider   $h\in L^\infty(\OO\to\mathcal H,\P)\cap\D(D^*)$.  By  Lemma \ref{Lem5}, the chain rule and \eqref{ITP}, we obtain
\begin{equation}\label{bb1}\E\big[(\nn_{w^h_T} f)(X_T^\mu)\big]= \E\big[D_h\{f(X_T^\mu)\}\big]= \E\big[f(X_T^\mu) D^*(h)\big].\end{equation}
In general,  for adapted $h\in \D(D^*)$, we choose $(h_n)_{n\ge 0} \subset L^\infty(\OO\to\mathcal H,\P)\cap\D(D^*)$ such that
\beq\label{ITP2} \lim_{n\to\infty} \E\big[\|h_n-h\|_{\mathcal H}^2+ |D^*(h_n)-D^*(h)|^2\big]=0.\end{equation}
In terms of  \eqref{bb1},  we have
\beq\label{LDB}  \E\big[(\nn_{w^{h_n}_T} f)(X_T^\mu)\big]= \E\big[f(X_T^\mu) D^*(h_n)\big],\ \  n\ge 1.\end{equation}
 By {\bf (B)} and \eqref{EE3},  we find a constant $C>0$ such that
$$  \E \|w^{h_n}_T- w^h_T\|_\C^2   \le C \E\|h-h_n\|_{\mathcal H}^2.$$
This, together with  $f\in C_b^1(\C)$ and
 \eqref{ITP2}, yields the desired formula \eqref{*WPB}
by  taking     $n\to\infty$ in \eqref{LDB}.
\end{proof}

\section{The G\^ateaux  and intrinsic derivatives}

For fixed $p\in [2,\infty)$ and $X_0^\mu\in L^p(\OO\to\C,\F_0,\P)$ with the distribution $\mu$, let $(X_t^\mu)_{t\ge0}$ be the unique solution to \eqref{E1} starting from $X_0^\mu$.
To calculate the intrinsic   derivative of $X_t^\mu$ w.r.t. $\mu$, we consider the tangent space
 $T_{\mu,p}:=L^p(\C\to\C,\mu)$, where $\C:= C([-r_0,0];\R^d)$ endowed with the uniform norm $\|\xi\|_\C:=\sup_{t\in [-r_0,0]}|\xi(t)|$ is a separable Banach space with the dual space
$ \C^*$ consisting of all bounded linear functionals $\aa: \C\to\R.$   We denote the dualization between $\C^*$ and $\C$ by $_{\C^*}\<\aa, \xi\>_\C= \aa(\xi)$ for $\aa\in\C^*, \xi\in\C$.
 For any $\mu\in \scr P_p(\C)$ and $\phi\in T_{\mu,p}$, let
$$\mu^{\phi} =\mu\circ({\rm Id}+\phi)^{-1}= \L_{({\rm Id}+\phi)(X_0^\mu)}.$$
 Let $(X^{\mu^{\phi}}_t)_{t\ge0}$ be the functional solution to \eqref{E1} with $X_0^{\mu^{\phi}}:=({\rm Id}+\phi)(X_0^\mu),$
and denote
$$\mu_t^{\phi} = \L_{X_t^{\mu^{\phi}}}, \ \ t\ge 0.$$ Then the   directional intrinsic derivative of $X_t^\mu$ along $\phi$ is given by
\beq\label{D12}   D_\phi^LX_t^\mu:=\lim_{\vv\to 0} \ff{X_t^{\mu^{\vv\phi}}-X_t^\mu}\vv \end{equation}
provided the limit above exists.

More generally, for $\xi\in L^p(\OO\to\C,\F_0,\P)$ and $\vv\in [0,1]$, we let $X_t^{\vv\xi,\mu}$ be the functional solution to \eqref{E1} with $X_0^{\vv\xi,\mu}=\vv\xi+ X_0^\mu,$ and denote   $\mu_t^{\xi,\vv}= \L_{X_t^{\vv\xi,\mu}}$.
Then the G\^ateaux derivative of $X_t^\mu$ along $\xi$ is
\beq\label{D12'}   \nn_\xi  X_t^\mu:=\lim_{\vv\to 0} \ff{X_t^{\vv \xi,\mu}-X_t^\mu}\vv \end{equation} provided the limit above exists. Obviously,
\beq\label{D12''}   \nn_\xi  X_t^\mu = D_\phi^L X_t^\mu\ ~~~~\text{if}\ \xi= \phi(X_0^\mu).\end{equation}

 To prove the existence of $\nn_\xi X_t^\mu$, we need the following lemma.

\begin{lem}\label{L1}
  Assume    {\bf (A)}.  For any $T>0$ and $q\ge p$, there exists a   constant $c >0$  such that
  \begin{equation}\label{D3}
\E\Big(\sup_{0\le s\le t} \|X^{\vv\xi,\mu}_s-X^\mu_s \|^q_\C
\Big)\le \vv^q\,\e^{c t} \E\|\xi\|_\C^q,\ \ t\in [0,T], ~\vv\in [0,1],~\xi\in L^q(\OO\to\C,\F_0,\P).
\end{equation}
\end{lem}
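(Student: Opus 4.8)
The plan is to run an It\^o--Gronwall argument on $Y(t):=X^{\vv\xi,\mu}(t)-X^\mu(t)$, which for $t\in[-r_0,0]$ equals $\vv\xi(t)$ and for $t\ge0$ solves the SDE obtained by subtracting the two copies of \eqref{E1}: the two solutions share the driving noise $W$ but have distinct distribution flows $\mu_t^{\xi,\vv}=\L_{X_t^{\vv\xi,\mu}}$ and $\mu_t=\L_{X_t^\mu}$. Since here $p\ge2$ and $q\ge p\ge2$, the map $y\mapsto|y|^q$ is $C^2$, so It\^o's formula applies to $|Y(t)|^q$. Using $(A_2)$ with segment arguments $(X_t^{\vv\xi,\mu},X_t^\mu)$ and measure arguments $(\mu_t^{\xi,\vv},\mu_t)$ — so that the $\|\xi-\eta\|_\C$ there becomes $\|Y_t\|_\C$ and $\W_p(\mu,\nu)$ becomes $\W_p(\mu_t^{\xi,\vv},\mu_t)$ — together with $|Y(t)|=|Y_t(0)|\le\|Y_t\|_\C$ and Young's inequality $|Y(t)|^{q-2}a^2\le\ff{q-2}q|Y(t)|^q+\ff2q a^q$, one obtains
\[
|Y(t)|^q\le\vv^q\|\xi\|_\C^q+C\int_0^t\big\{\|Y_s\|_\C^q+\W_p(\mu_s^{\xi,\vv},\mu_s)^q\big\}\,\d s+M(t),
\]
with $M(t)$ the stochastic integral term. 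Taking the supremum over $[-r_0,t]$ (the part over $[-r_0,0]$ contributes only $\vv^q\|\xi\|_\C^q$), applying BDG to $\sup_{s\le t}|M(s)|$ — again invoking $(A_2)$ for $\|\si(s,X_s^{\vv\xi,\mu},\mu_s^{\xi,\vv})-\si(s,X_s^\mu,\mu_s)\|_{\rm HS}^2\le K\{\|Y_s\|_\C^2+\W_p(\mu_s^{\xi,\vv},\mu_s)^2\}$ — and Young's inequality to absorb $\ff12\E\sup_{s\le t}\|Y_s\|_\C^q$, one arrives at
\[
\E\Big(\sup_{0\le s\le t}\|Y_s\|_\C^q\Big)\le C\vv^q\E\|\xi\|_\C^q+C\int_0^t\Big\{\E\sup_{0\le r\le s}\|Y_r\|_\C^q+\W_p(\mu_s^{\xi,\vv},\mu_s)^q\Big\}\,\d s .
\]

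For the Wasserstein term I would note that $(X_s^{\vv\xi,\mu},X_s^\mu)$ is a coupling of $(\mu_s^{\xi,\vv},\mu_s)$, whence $\W_p(\mu_s^{\xi,\vv},\mu_s)^q\le(\E\|Y_s\|_\C^p)^{q/p}\le\E\sup_{0\le r\le s}\|Y_r\|_\C^q$ by Jensen's inequality (legitimate since $q/p\ge1$ in the setting $p\ge2$, $q\ge p$). Inserting this and applying Gronwall's inequality would give \eqref{D3} with the claimed factor $\e^{ct}$ — provided one knows a priori that $\E\sup_{0\le s\le t}\|Y_s\|_\C^q<\infty$. This is the only genuine obstacle: $X_0^\mu$ is merely assumed in $L^p$, so the individual solutions $X^{\vv\xi,\mu},X^\mu$ need not have finite $q$-th moments, and the crude bound $\|Y_s\|_\C\le\|X_s^{\vv\xi,\mu}\|_\C+\|X_s^\mu\|_\C$ is useless. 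I would resolve it by localization: put $\tau_n=\inf\{t\ge0:\|X_t^{\vv\xi,\mu}\|_\C\vee\|X_t^\mu\|_\C\ge n\}$, which increases to $\infty$ since both solutions are global; on $[0,\tau_n]$ one has $\sup_{s\le t\wedge\tau_n}\|Y_s\|_\C\le 2n+\vv\|\xi\|_\C\in L^q$, hence $g_n(t):=\E\sup_{0\le s\le t\wedge\tau_n}\|Y_s\|_\C^q<\infty$. Running the above estimate with $\tau_n$ inserted, and in this localized step controlling the (non-localized) Wasserstein term not by $\E\sup\|Y_r\|_\C^q$ but by the already available $L^p$-estimate \eqref{ESTY'}, i.e. $\W_p(\mu_s^{\xi,\vv},\mu_s)^p\le\E\|Y_s\|_\C^p\le c\,\vv^p\E\|\xi\|_\C^p$, so that $\W_p(\mu_s^{\xi,\vv},\mu_s)^q\le c^{q/p}\vv^q\E\|\xi\|_\C^q$ is a fixed finite constant, one closes a Gronwall inequality for $g_n$ and lets $n\to\infty$ by monotone convergence to get $\E\sup_{0\le s\le t}\|Y_s\|_\C^q<\infty$.

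With this finiteness secured, one re-runs the estimate of the first paragraph without localization; now the bound $\W_p(\mu_s^{\xi,\vv},\mu_s)^q\le\E\sup_{0\le r\le s}\|Y_r\|_\C^q$ may be used legitimately, and Gronwall's inequality delivers $\E(\sup_{0\le s\le t}\|Y_s\|_\C^q)\le\vv^q\e^{ct}\E\|\xi\|_\C^q$. Apart from the a priori integrability point just described, the argument is a routine It\^o--BDG--Young--Gronwall computation patterned on the proofs of Lemma \ref{Lem01} and Lemma \ref{Lem1}, the only new ingredient being Jensen's inequality to pass from the exponent $p$ built into assumption {\bf (A)} and \eqref{ESTY'} to the larger exponent $q$.
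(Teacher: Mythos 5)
Your proposal is correct and follows essentially the same route as the paper: It\^o's formula on the $q$-th power of the difference, $(A_2)$ combined with Young's inequality, the coupling/Jensen bound $\W_p(\mu_s^{\xi,\vv},\mu_s)^q\le \E\sup_{0\le r\le s}\|Y_r\|_\C^q$, BDG, a stopping-time localization to secure a priori finiteness, and Gronwall. The paper dispatches the localization in one sentence (``by stopping at an exit time as in the proof of Lemma \ref{Lem1}''), whereas you explicitly flag and resolve the only delicate point there --- that the Wasserstein term is not localized and must first be controlled via \eqref{ESTY'} --- which is a careful and welcome refinement rather than a departure.
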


\begin{proof} Set $ \Phi^{\xi,\vv}(t): =\ff{X^{\vv\xi,\mu}(t)-X^\mu(t)}\vv, t\ge-r_0,
 \vv>0.$  Since  $X_t^{\vv\xi,\mu}$ and $X_t^\mu$ solve  \eqref{E1} with the initial values $X_0^{\vv\xi,\mu}$ and $X_0^\mu$, respectively,  one   has
\begin{equation}\label{D11}
\begin{split}
\d\Phi^{\xi,\vv}(t)&=\ff{1}{\vv}\big\{b(t,X_t^{\vv\xi,\mu},\mu_t^{\xi,\vv})-b(t,X_t^\mu,\mu_t)\big\}\d t\\
&\qquad +\ff{1}{\vv}\big\{\si(t,X_t^{\vv\xi,\mu},\mu_t^{\xi,\vv})-\si(t,X_t^\mu,\mu_t)\big\}\d
W(t),\ \ t\ge 0, \Phi^{\xi,\vv}_0=\xi.
\end{split}
\end{equation}
By {\bf (A)}, and applying It\^o's formula and    the fact that
$$\W_p(\mu_s^{\xi,\vv},\mu_s)^p \le \E\|X_s^{\vv\xi,\mu}-  X_s^\mu\|_\C^p= \vv^p   \E\|\Phi_s^{\xi,\vv}\|_\C^p\le \vv^p  \{ \E\|\Phi_s^{\xi,\vv}\|_\C^q\}^{\ff p q},$$
we find a constant $c_1>0$ such that
\beq\label{D03}
\begin{split}
 |\Phi^{\xi,\vv}(t)|^q &\le \ff{q}2 \int_0^t  |\Phi^{\xi,\vv}(s)|^{q-2}  \Big\{\ff{2}{\vv}
\<\Phi^{\xi,\vv}(s),b(s,X_s^{\vv\xi,\mu},\mu_s^{\xi,\vv})-b(s,X_s^\mu,\mu_s)\> \\
&\quad+\ff{q-1 }{ \vv^2}
 \|\si(s,X_s^{\vv\xi,\mu}, \mu_s^{\xi,\vv})-\si(s, X_s^\mu,\mu_s)\|_{\rm HS}^2 \Big\}\d s +  M^\vv(t)\\
 &\le c_1\int_0^t \big\{\|\Phi_s^{\xi,\vv}\|_\C^q +\E \|\Phi_s^{\xi,\vv}\|_\C^q\big\} \d s  +  M^\vv(t),\ \ t\ge 0,
\end{split}
\end{equation}
where
$$M^\vv(t):=\ff{q}{\vv}\int_0^t |\Phi^{\xi,\vv}(s)|^{q-2}\big\<\Phi^{\xi,\vv}(s),(\si(s,X_s^{\vv\xi,\mu},\mu_s^{\xi,\vv})-\si(s,X_s^\mu,\mu_s))\d W(s)\big\>.$$
 Next, by   BDG's inequality
and {\bf (A)}, there exist  some constants  $c_2,c_3>0$ such that
\begin{equation*}
\begin{split}
\E\Big(\sup_{0\le s\le t} M^\vv(s) \Big)
&\le \ff{c_2}{\vv}\E\bigg(\sup_{0\le s\le t} |\Phi^{\xi,\vv}(s) |^{ q }
 \int_0^t |\Phi^{\xi,\vv}(s) |^{q-2}\| \si(s,X_s^{\vv\xi,\mu},\mu_s^{\xi,\vv})-\si(s,X_s^\mu,\mu_s)\|^2\d s \bigg)^{\ff{1}{2}}\\
&\le \ff{1}{2}\E\Big(\sup_{0\le s\le
t}|\Phi^{\xi,\vv}(s)|^q\Big)+c_3\E \int_0^t   \|\Phi_s^{\xi,\vv}\|_\C^q  \d s.
\end{split}
\end{equation*}
Combining this with   \eqref{D03}, we derive
\begin{equation*}
\E\Big(\sup_{0\le s\le t}\|\Phi^{\xi,\vv}_s\|^q_\C\Big)\le 2\E\|\Phi^{\xi,\vv}_0\|^q_\C+c_4 \int_0^t\E\|\Phi^{\xi,\vv}_s\|^q_\C\d s,\ \ t\ge 0
\end{equation*} for some constant $c_4>0$. By stopping at an exit time as in the proof of Lemma \ref{Lem1}, we may   assume   $\E\Big(\sup_{0\le s\le t}\|\Phi^{\xi,\vv}_s\|^q_\C\Big)<\infty$,
 such that    \eqref{D3} follows from Gronwall's inequality. \end{proof}

Consider  the following SDE with memory
\begin{equation}\label{D007}
\begin{split}
&\d v^\xi(t)=\Big\{\{(\nn_{v^\xi_t}b)(t,\cdot,\mu_t)\}(X_t^\mu) +(\E_{\C^*}\<D^Lb(t,\eta, \cdot)(\mu_t)(X^\mu_t),v^\xi_t\>_\C)\Big|_{\eta=X^\mu_t}\Big\}\d
t\\
&\quad+\Big\{\{(\nn_{v^\xi_t}\si)(t,\cdot,\mu_t)\}(X_t^\mu) +(\E_{\C^*}\<D^L\si(t,\eta, \cdot)(\mu_t)(X^\mu_t),v^\xi_t\>_\C)\Big|_{\eta=X^\mu_t}\Big\}
\d W(t)
\end{split}
\end{equation} with the initial value $  v_0^\xi=\xi,$  where, for $t\ge0,$ $\mu_t:=\L_{X_t^\mu}$ and
\begin{align*}
 _{\C^*}\<D^Lb(\eta, \cdot)(\mu_t)(X^\mu_t),v^\xi_t\>_\C:&=
 \big(_{\C^*}\< D^Lb_i( \eta, \cdot)(\mu_t)(X^\mu_t), v^\xi_t\>_\C\big)_{1\le i\le d}\in\R^d\\
 _{\C^*}\<D^L\si(\eta, \cdot)(\mu_t)(X^\mu_t),v^\xi_t\>_\C:&=
 \big(_{\C^*}\< D^L\si_{ij}( \eta, \cdot)(\mu_t)(X^\mu_t), v^\xi_t\>_\C\big)_{1\le i\le d,1\le j\le m}\in\R^d\otimes\R^m.
\end{align*}
Let $p\ge 2$. By  {\bf (B)}, this linear SDE has a unique solution.     Moreover,   by It\^o's formula and   BDG's inequality,  we find a constant $c>0$ such that
\begin{equation}\label{F05}
\E \|v_t^\xi\|_\C^q \le c\, \E \|\xi\|_\C^q,~~~t\in [0,T],\ \ \xi\in L^q(\OO\to\C,\F_0,\P).
\end{equation}

\begin{lem}\label{Lem}  Assume   {\bf (B)} for some $p\ge 2$. Then for any   $ \xi\in L^p(\OO\to\C,\F_0,\P),$
 the limit in $\eqref{D12'}$ exists in   $L^2(\OO\to C([0,T];\C),\P)$ and it gives rise to the  unique functional solution of  \eqref{D007}.
\end{lem}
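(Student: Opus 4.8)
The plan is to mirror the argument for the Malliavin derivative in Lemma \ref{Lem5}, the only new feature being that the measure argument $\mu_t^{\xi,\vv}:=\L_{X_t^{\vv\xi,\mu}}$ now also varies with the perturbation parameter $\vv$. By \eqref{D12''} it suffices to treat $\nn_\xi X_t^\mu$. First I would set $\Phi^{\xi,\vv}(t):=\vv^{-1}(X^{\vv\xi,\mu}(t)-X^\mu(t))$, which by \eqref{D11} solves an SDE with $\Phi_0^{\xi,\vv}=\xi$, let $v^\xi$ be the unique solution of \eqref{D007} (which exists by {\bf (B)}), and put $\Lambda^{\xi,\vv}:=\Phi^{\xi,\vv}-v^\xi$, so that $\Lambda^{\xi,\vv}$ vanishes on $[-r_0,0]$. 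By Lemma \ref{L1} (applied for all exponents $q\ge p$) and \eqref{F05}, $\Phi^{\xi,\vv}$ and $v^\xi$ are bounded in $L^q(\OO\to C([0,T];\C),\P)$ uniformly in $\vv\in[0,1]$; in particular $\E\big(\sup_{0\le t\le T}|\Lambda^{\xi,\vv}(t)|^2\big)<\8$. Since $\vv^{-1}(X_t^{\vv\xi,\mu}-X_t^\mu)=\Phi_t^{\xi,\vv}$, everything reduces to proving $\E\big(\sup_{0\le t\le T}|\Lambda^{\xi,\vv}(t)|^2\big)\to0$ as $\vv\downarrow0$, which gives $\nn_\xi X_t^\mu=v_t^\xi$ in $L^2(\OO\to C([0,T];\C),\P)$.

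I would then subtract \eqref{D007} from \eqref{D11} and decompose each coefficient increment; for the drift,
$$\ff{b(t,X_t^{\vv\xi,\mu},\mu_t^{\xi,\vv})-b(t,X_t^\mu,\mu_t)}\vv=\ff{b(t,X_t^{\vv\xi,\mu},\mu_t^{\xi,\vv})-b(t,X_t^\mu,\mu_t^{\xi,\vv})}\vv+\ff{b(t,X_t^\mu,\mu_t^{\xi,\vv})-b(t,X_t^\mu,\mu_t)}\vv,$$
handling the first term by Taylor's formula in the state variable and the second by the integral form of the chain rule from the proof of Theorem \ref{T0}(2) applied to $\nu\mapsto b(t,\eta,\nu)$ along $r\mapsto\L_{X_t^\mu+r\vv\Phi_t^{\xi,\vv}}$ and then evaluated at $\eta=X_t^\mu$, and likewise for $\si$. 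Substituting $\Phi_t^{\xi,\vv}=v_t^\xi+\Lambda_t^{\xi,\vv}$ exhibits $\Lambda^{\xi,\vv}$ as the solution of an SDE with drift $\{(\nn_{\Lambda_t^{\xi,\vv}}b)(t,\cdot,\mu_t)\}(X_t^\mu)+\Theta_1^\vv(t)+\Gamma_1^\vv(t)$ and diffusion coefficient $\{(\nn_{\Lambda_t^{\xi,\vv}}\si)(t,\cdot,\mu_t)\}(X_t^\mu)+\Theta_2^\vv(t)+\Gamma_2^\vv(t)$, where the ``measure--linear'' terms $\Theta_i^\vv$ are of the type $\big(\E_{\C^*}\<D^Lb(t,\eta,\cdot)(\mu_t)(X_t^\mu),\Lambda_t^{\xi,\vv}\>_\C\big)\big|_{\eta=X_t^\mu}$ (with the independent-copy convention of \eqref{D007}) and, by $(B_2)$, obey $\E|\Theta_i^\vv(t)|^2\le C\,\E\|\Lambda_t^{\xi,\vv}\|_\C^2$, while the $\Gamma_i^\vv$ are remainders. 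Combining the continuity of $\nn b,\nn\si,D^Lb,D^L\si$ from {\bf (B)}, the bounds \eqref{D3} and \eqref{F05} (which make the base points and the measures $\mu_t^{\xi,\vv}$ converge to $X_t^\mu$ and $\mu_t$) and the ensuing uniform integrability, arguing as in the two cases treated after \eqref{KB2} in the proof of Lemma \ref{Lem5}, I would show $\E\int_0^T\{|\Gamma_1^\vv(s)|^2+\|\Gamma_2^\vv(s)\|_{\rm HS}^2\}\d s\to0$.

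Finally, applying It\^o's formula to $|\Lambda^{\xi,\vv}(t)|^2$ and arguing as in \eqref{F1}--\eqref{F3} — bounding the $\nn_{\Lambda_t^{\xi,\vv}}$-contributions of drift and diffusion by $(B_3)$, the $\Theta_i^\vv$-contributions by Cauchy--Schwarz and the $(B_2)$ bound, the $\Gamma_i^\vv$-contributions by Cauchy--Schwarz, and the stochastic integral by the BDG inequality, with the $\sup$ absorbed on the left — one obtains, for some $C>0$,
$$\E\Big(\sup_{0\le s\le t}|\Lambda^{\xi,\vv}(s)|^2\Big)\le C\int_0^t\E\|\Lambda_s^{\xi,\vv}\|_\C^2\,\d s+C\,\E\int_0^T\big\{|\Gamma_1^\vv(s)|^2+\|\Gamma_2^\vv(s)\|_{\rm HS}^2\big\}\d s,\quad t\in[0,T],$$
so Gronwall's inequality together with the previous step gives $\E\big(\sup_{0\le t\le T}|\Lambda^{\xi,\vv}(t)|^2\big)\le C\e^{CT}\E\int_0^T\{|\Gamma_1^\vv(s)|^2+\|\Gamma_2^\vv(s)\|_{\rm HS}^2\}\d s\to0$; by \eqref{D12''} the limit is the unique functional solution of \eqref{D007}.

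The step I expect to be the main obstacle is the decomposition in the second paragraph, because of an apparent circularity: $\mu_t^{\xi,\vv}$ is the law of the very perturbed solution whose derivative is sought, so linearizing it seems to presuppose the existence of $\nn_\xi X_t^\mu$. This is resolved by the observation that, after writing $\Phi_t^{\xi,\vv}=v_t^\xi+\Lambda_t^{\xi,\vv}$, the unknown $\Lambda^{\xi,\vv}$ reappears only inside $\Theta_i^\vv$, which by $(B_2)$ is dominated in $L^2$ by $\|\Lambda_t^{\xi,\vv}\|_\C$ and hence is absorbed by Gronwall's inequality; a secondary technical point is the uniform integrability needed to kill the polynomially growing remainders $\Gamma_i^\vv$, which is precisely why Lemma \ref{L1} is stated for all $q\ge p$.
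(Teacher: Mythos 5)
Your proposal is correct and follows essentially the same route as the paper: the paper also sets $\Xi^{\xi,\vv}=\Phi^{\xi,\vv}-v^\xi$, splits the coefficient increments into a state part (Taylor along $X^{\vv,\theta}_t=X^\mu_t+\theta(X^{\vv\xi,\mu}_t-X^\mu_t)$) and a measure part (the chain rule of Theorem \ref{T0} applied along $\theta\mapsto\L_{X_t^{\vv,\theta}}$, which as you note involves no circularity since the $\theta$-derivative of this linear interpolation is just $\vv\Phi_t^{\xi,\vv}$), isolates the terms linear in $\Xi^{\xi,\vv}$ plus remainders $\Upsilon_i^\vv$, and closes via It\^o, BDG, Gronwall and the same uniform-integrability argument used for \eqref{W1}. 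No substantive differences.
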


\begin{proof} Let   $\Xi^{\xi,\vv}_t=\Phi^{\xi,\vv}_t-v^\xi_t ,  $ where
$(\Phi^{\xi,\vv}_t)_{t\ge0}$ solves \eqref{D11}. To end the proof, it suffices  to prove
\begin{equation}\label{D10}
\lim_{\vv\to0}\E\Big(\sup_{0\le t\le T}\|\Xi^{\xi,\vv}_t\|^2_\C\Big)=0,\ \ T>0.
\end{equation} Set
$$X^{\vv,\theta}(t):=X^\mu(t)+\theta(X^{\vv\xi,\mu}(t)-X^\mu(t)),~t\ge-r_0,~
\theta\in[0,1].$$ By \eqref{D11}, \eqref{D007} and Theorem \ref{T0},   we obtain
\begin{equation*}
\begin{split}
\d&\Xi^{\xi,\vv}(t)=\Big\{  \{(\nn_{\Xi^{h,\vv}_t}b)(t,\cdot, \mu_t)\}(X_t^\mu)+(\E_{\C^*}\<(D^Lb(t, \eta,\cdot))(\mu_t)(X^\mu_t),\Xi^{\xi,\vv}_t\>_{\C})\Big|_{\eta=X_t^\mu}+\Upsilon_1^\vv(t)\Big\}\d t\\
&\quad+\Big\{ \{ (\nn_{\Xi^{h,\vv}_t}\si)
(t,\cdot, \mu_t)\}(X_t^\mu)+(\E_{\C^*}\<(D^L\si(t, \eta,\cdot))(\mu_t)(X^\mu_t),\Xi^{\xi,\vv}_t\>_{\C})\Big|_{\eta=X_t^\mu}+ \Upsilon_2^\vv(t)\Big\}\d W(t),
\end{split}
\end{equation*}
where
\begin{align*}
\Upsilon_1^\vv(t):&= \int_0^1 \big\{\{(\nn_{\Phi^{\xi,\vv}_t}b)(t,\cdot,\mu_t^{\xi,\vv})\}(X^{\vv,\theta}_t)-\{(\nn_{\Phi_t^{\xi,\vv}}b)(t,\cdot,\mu_t)\}(X^\mu_t)
\big\}\d\theta\\
&\quad+\int_0^1 \Big\{(\E_{\C^*}\big\<(D^Lb(t, \eta, \cdot))(\mathscr{L}_{X^{\vv,\theta}_t})(X_t^{\vv,\theta})-(D^Lb(t, \eta, \cdot))
(\mu_t)(X^\mu_t),\Phi^{\xi,\vv}_t\big\>_{\C})\Big\}\Big|_{\eta=X_t^\mu} \d\theta,\\
 \Upsilon_2^\vv(t):&=\int_0^1\big\{\{(\nn_{\Phi^{\xi,\vv}_t}\si)(t,\cdot,\mu_t^{\xi,\vv}))\}(X^{\vv,\theta}_t)-\{(\nn_{\Phi_t^{\xi,\vv}}\si)(t,\cdot,\mu_t)\}
(X^\mu_t)\big\}\d \theta\\
&\quad+\int_0^1 \Big\{(\E_{\C^*}\big\<(D^L\si(t, \eta, \cdot))(\mathscr{L}_{X^{\vv,\theta}_t})(X_t^{\vv,\theta})-(D^L\si(t, \eta, \cdot))
(\mu_t)(X^\mu_t),\Phi^{\xi,\vv}_t\big\>_{\C})\Big\}\Big|_{\eta=X_t^\mu} \d\theta.
\end{align*} By It\^o's formula, we obtain
\beq\label{YPP}
|\Xi^{\xi,\vv}(t)|^2\le  \Theta_1^\vv(t)+\Theta_2^\vv(t)+\Theta_3^\vv(t)+\Theta_4^\vv(t),\ \ t\ge 0,\end{equation}
where
\beg{align*} &\Theta_1^\vv(t):=   \int_0^t \Big\{ 2\<\Xi^{\xi,\vv}(s), (\nn_{\Xi^{\xi,\vv}_s}b)
(s, \cdot, \mu_s)(X_s^\mu)\> +3\,\|(\nn_{\Xi^{h,\vv}_s}\si)(s, \cdot, \mu_s)(X_s^\mu)\|_{\rm HS}^2\\
&\qquad\qquad + 2\big\<\Xi^{\xi,\vv}(s),\{\E_{\C^*}\<D^Lb(s, \eta, \cdot)(\mu_s)(X^\mu_s),\Xi^{\xi,\vv}_s\>_\C\}\big\>\Big|_{\eta=X_s^\mu}
\\
&\qquad\qquad +3\|(\E_{\C^*}\<(D^L\si(s, \eta,\cdot))(\mu_s)(X^\mu_s),\Xi^{\xi,\vv}_s\>_{\C})\|_{\rm HS}^2\Big|_{\eta=X_s^\mu}\Big\}\d s,\\
 &\Theta_2^\vv(t) :=     \int_0^t\big\{3 \|\Upsilon_2^\vv(s)\|_{\rm HS}^2 +2 \<\Xi^{\xi,\vv}(s),\Upsilon_1^\vv(s)\>\big\}\d s,\\
&\Theta_3^\vv(t) := 2\int_0^t \big\<\Xi^{\xi,\vv}(s),\big\{ (\nn_{\Xi^{h,\vv}_s}\si)
(s,\cdot, \mu_s)(X_s^\mu)\\
&\qquad\qquad+(\E_{\C^*}\<(D^L\si(s, \eta,\cdot))(\mu_s)(X^\mu_s),\Xi^{\xi,\vv}_s\>_{\C})+  \Upsilon_2^\vv(s)\big\}\Big|_{\eta=X_s^\mu}\d W(s)\big\>. \end{align*}
By     {\bf (B)}, we find   a constant $c_1>0$ such that for any $t\in [0,T]$,
\begin{equation}\label{D01}
\begin{split}
\E\Big(\sup_{0\le s\le t}\Theta_1^\vv(s)\Big)&\le
c_1\int_0^t\Big\{ \E\|\Xi_s^{\xi,\vv}\|_\C^2+\E|\Xi^{\xi,\vv}(s)| \ss{\E\|\Xi^{\xi,\vv}(s)\|^2_\C} \Big\}\d
s\\
& \le 2c_1\int_0^t  \E\|\Xi_s^{\xi,\vv}\|_\C^2\d s.
\end{split}
\end{equation}
Next, there exists a constant $c_2>0$ such that \begin{equation}\label{D09}
\E\Big(\sup_{0\le s\le
t}\Theta_2^\vv(s)\Big)\le c_2 \int_0^t\big\{\E|\Xi^{\xi,\vv}(s)|^2 + \E |\Upsilon_1^\vv(s)|^2+ \E|\Upsilon_2^\vv(s)|^2 \big\}\d s,\ \ t\in [0,T].
\end{equation}
Moreover, applying BDG's inequality and using $(B_3)$, we find   constants $c_3,c_4>0$ such
 that
\begin{equation*}
\begin{split}
\E\Big(\sup_{0\le s\le t}\Theta_3^\vv(s)\Big)&\le
c_3\E\bigg(\sup_{0\le s \le
t}|\Xi^{\xi,\vv}(s)|^2\int_0^t\Big\{\|\{(\nn_{\Xi^{h,\vv}_s}\si)
(s,\cdot, \mu_s)\}(X_s^\mu)\\
&\quad+(\E_{\C^*}\<(D^L\si(s, \eta,\cdot))(\mu_s)(X^\mu_s),\Xi^{\xi,\vv}_s\>_{\C})+  \Upsilon_2^\vv(s)\|_{\rm
HS}^2\Big|_{\eta=X_s^\mu}\Big\}\d
s\bigg)^{1/2} \\
 &\le \ff{1}{2}\E\Big(\sup_{0\le s \le
t}|\Xi^{\xi,\vv}(s)|^2\Big)+c_4\int_0^t \big\{\E \|
\Xi^{\xi,\vv}_s\|^2_\C  + \E\|\Upsilon_2^\vv(s)\|_{\rm
HS}^2\big\}\d s,\ \ t\in [0,T].
\end{split}
\end{equation*}  Substituting this and \eqref{D01}, \eqref{D09} into \eqref{YPP}, and noting that
 $\Xi^{\xi,\vv}_0={\bf0}$,
 we find a constant $c>0$ such that
\begin{equation*}
\E\Big(\sup_{0\le s\le t}\|\Xi^{\xi,\vv}_s\|^2_\C\Big)\le c\int_0^t\E \|\Xi^{\xi,\vv}_s\|^2_\C\d s
+c\int_0^t\big\{\E|\Upsilon_1^\vv(s)|^2+ \E\|\Upsilon_2^\vv(s)\|_{\rm HS}^2\big\}\d s,\ \ t\in [0,T].
\end{equation*}
 Since $\E\big(\sup_{0\le s\le t}\|\Xi^{\xi,\vv}_s\|^2_\C\big)<\infty$ due to \eqref{D3} and \eqref{F05},
Gronwall's inequality yields
\beq\label{JH3}
\E\Big(\sup_{0\le s\le T}|\Xi^{\xi,\vv}(s)|^2\Big)\le
c\,\e^{cT}\int_0^T\big\{\E|\Upsilon_1^\vv(t)|^2+ \E\|\Upsilon_2^\vv(t)\|_{\rm HS}^2\big\}\d s.
\end{equation}
This implies  \eqref{D10} by following the argument to deduce \eqref{W1}  from \eqref{W2}.
 \end{proof}

  Let $C_p^1(\C)$ be the class of functions $f\in C^1(\C)$ such that for some constant $c>0,$
  \beq\label{CPP} \|\nn f(\xi)\|\le c\,(1+ \|\xi\|_\infty^{p-1}),\ \ \xi\in \C.\end{equation}

  \beg{prp}\label{PNN} Assume   {\bf (B)} for some $p\ge 2$. For any $ T\ge 0$, $f\in C_p^1(\C)$ and $\mu\in \scr P_p(\C)$, $(P_Tf)(\mu)$ is $L$-differentiable w.r.t. $\mu\in \scr P_p(\C)$ and
  $$D_\phi^L(P_Tf)(\mu)= \E_{\C^*} \<\nn f(X_T^\mu), \nn_{\phi(X_0^\mu)}X_T^\mu\>_\C.$$
  Consequently, letting $\Phi: \C\to \C^*$ be a measurable function such that
  $$\Phi(X_0^\mu)= \E(\{\nn X_T^\mu\}^*\nn f(X_T^\mu)|X_0^\mu),$$ we have $D^L (P_Tf)(\mu)= \Phi.$ \end{prp}

  \beg{proof}  Let $X_t^{\phi,\mu}=X_t^{\mu\circ({\rm Id}+\phi)^{-1}}$ be the functional solution to \eqref{E1} with initial value $X_0^\mu+\phi(X_0^\mu)$.
For any $f\in C_p^1(\C)$,   by Lemma \ref{Lem}, \eqref{F05} and \eqref{CPP},   we may apply  Taylor's expansion to derive that  for  small $\|\phi\|_{T_{\mu,p}}$,
$$(P_Tf)(\mu\circ({\rm Id}+\phi)^{-1}) -(P_Tf)(\mu) = \E[f(X_T^{\phi,\mu})-f(X_T^\mu)] = \E_{\C^*}\<\nn f(X_T^\mu), \nn_{\phi(X_0^\mu)}X_T^\mu\>_\C + {\rm o}(\|\phi\|_{T_{\mu,p}}).$$
  This implies the desired assertion. \end{proof}

\section{Bismut formula for the $L$-derivative }

In this section, we consider \eqref{E1} with $\si(t,\xi,\mu)=\si(t,\xi(0))$   dependent only  on $t$ and $\xi(0)$, i.e.,
\beq\label{E11} \d X(t)= b(t,X_t,\L_{X_t})\d t +\si(t,X(t))\d W(t).\end{equation}
We aim to  investigate the intrinsic derivative of
$(P_tf)(\mu)$, given by \eqref{SM}  associated with  $X_t^\mu$.

The main results (Theorems \ref{TN1}, \ref{T4.2} and \ref{TN2}  below) of this part generalize those derived in \cite{BWY} for SDEs with memory and in \cite{RW19} for McKean-Vlasov SDEs without memory.
Going back to the case $r_0=0$ (i.e. without memory), the conditions in Theorems \ref{TN1} and  \ref{T4.2} are weaker than the corresponding ones  used in \cite{RW19}, since the drift $b$ herein is  allowed   to be non-Lipschitz continuous w.r.t. the space variables.
We will first prove a general result and  then apply it to establish the Bismut formula for \eqref{E1} with additive and multiplicative noise, respectively.

\subsection{A general result}

\beg{thm}\label{TNN} Assume    {\bf (B)} for  some $p\ge 2$, and let  $T>r_0$.  Suppose that for any $\mu\in \scr P_p(\C)$ and $\xi\in L^p(\OO\to\C,\F_0,\P)$, there exists  $h_{\xi, \mu}\in \D(D^*)$,  which is adapted when $\si(t,\xi,\mu)$ depends on $\xi$,
 such that
\beq\label{OY1} w_T^{h_{\xi,\mu}} =\nn_\xi X_T^\mu,\end{equation}
where $\nn_\xi X_T^\mu$ is in $\eqref{D12'}$ and $w_T^{h_{\xi,\mu}}$ solves  $\eqref{EE3}$ for $h=h_{\xi,\mu}$. Moreover,  suppose that for some  increasing function $\aa_T: [0,\infty)\to [0,\infty)$ we have
\beq\label{OY2} \E |D^*( h_{\xi,\mu})|^2\le \aa_T(\|\mu\|_p) (\E\|\xi\|_\C^p)^{\ff 2 p},\ \ \xi\in L^p(\OO\to\C,\F_0,\P),\mu\in \scr P_p(\C).\end{equation}
Then the following assertions hold.
\beg{enumerate} \item[$(1)$] For any $f\in \B_b(\C)$,
\beq\label{OY0} |(P_Tf)(\mu)-(P_Tf)(\nu)|\le \ss{\aa_T(\|\mu\|_p\lor\|\nu\|_p) } \|f\|_\infty \W_p(\mu,\nu),\ \ \mu,\nu\in \scr P_p(\C).\end{equation}
\item[$(2)$] For any $f\in C_b^1(\C)$, $(P_Tf)(\mu)$ is intrinsically differentiable in $\mu\in \scr P_p(\C)$ such that
\beq\label{OBS} D_\phi^L(P_Tf)(\mu) = \E\big[f(X_T^{\mu})D^*( h_{\phi(X_0^\mu), \mu})\big],\ \ \phi\in T_{\mu,p}.\end{equation} Consequently,
\beq\label{OY} \|D^L(P_Tf)(\mu)\|_{T_{\mu,p}^*}^2\le   \aa_T(\|\mu\|_p) (P_Tf^2)(\mu),\ \ \mu\in \scr P_p(\C).\end{equation}
\item[$(3)$] If  moreover
\beq\label{OY2'}   \lim_{\W_p(\nu,\mu)\to 0} \sup_{\E\|\xi\|^p_\C\in (0,1)} \ff{\E|D^*( h_{\xi,\nu})- D^*( h_{\xi,\mu})|^2}{(\E\|\xi\|^p_\C)^{\ff 2 p}}=0,\ \ \mu\in \scr P_p(\C),\end{equation}
then for any $f\in C_b(\C)$, $(P_Tf)(\mu)$ is $L$-differentiable in $\mu\in \scr P_p(\C)$ and  $\eqref{OY}$ holds.
\end{enumerate}  \end{thm}

\beg{proof} (1)  We first consider $f\in C_b^1(\C)$. Recall that $X_t^{\vv\xi,\mu}$ is the functional solution to \eqref{E1} with $X_0^{\vv\xi,\mu}=\vv\xi+ X_0^\mu,$ and   $\mu_t^{\xi,\vv}= \L_{X_t^{\vv\xi,\mu}}$. Then,
we have
$$\ff{\d}{\d s} \E f(X_T^{s\xi, \mu}):=  \lim_{\vv\to 0} \ff {\E f(X_T^{(s+\vv)\xi, \mu})- \E  f(X_T^{s\xi,\mu})}\vv =\nn_\xi (P_Tf)(\mu^{\xi,s}),\ \ s\in [0,1].$$ Then, by applying  \eqref{OY1}  with $\mu$ replaced by $\mu^{\xi,s}$ and  using
Proposition \ref{P4.3},  we obtain
\beq\label{OYY} \beg{split} \ff{\d}{\d s} \E f(X_T^{s\xi, \mu})&  =\E\big[_{\C^*}\<\nn f(X_T^{s\xi,\mu}), \nn_\xi X_T^{s\xi,\mu}\>_\C\big] \\
&= \E\big[_{\C^*}\<\nn f(X_T^{s\xi,\mu}), w^{h_{\xi, \mu^{\xi,s}}}_T\>_\C\big]= \E\big[f(X_T^{s\xi,\mu})D^*( h_{\xi, \mu^{\xi,s}})  \big].\end{split} \end{equation}
Whence, one has
\beq\label{LB} \beg{split} (P_Tf)(\L_{X_0^\mu+\xi})- (P_T f)(\mu) &= \E f(X_T^{\xi,\mu})- \E f(X_T^\mu) =\int_0^1\ff{\d}{\d s} \E f(X_T^{s\xi, \mu})\d s \\
&= \int_0^1 \E\big[f(X_T^{s\xi,\mu})D^*( h_{\xi, \mu^{\xi,s}})\big]\d s,\ \ f\in C_b^1(\C).\end{split}\end{equation}
Let
$$\tt\mu_T(A)= \int_0^1 \E[1_A(X_T^{s\xi,\mu})D^*( h_{\xi, \mu^{\xi,s}})\big]\d s,\ \ A\in \B(\C).$$
Since $C_b^1(\C)$ is dense in $L^1(\L_{X_T^{\xi,\mu}}+ \L_{X_T^\mu} + \tt\mu_T)\supset \B_b(\C)$, \eqref{LB} implies
\beq\label{OY4} (P_Tf)(\L_{X_0^\mu+\xi})- (P_T f)(\mu)    = \int_0^1 \E\big[f(X_T^{s\xi,\mu})D^*( h_{\xi, \mu^{\xi,s}})\big]\d s,\ \ f\in \B_b(\C).\end{equation}
Now, for any $\nu\in \scr P_p(\C)$, let $\xi\in L^p(\OO\to\C,\F_0,\P)$ such that $\L_{X_0^\mu+\xi}=\nu$ and
$$\W_p(\mu,\nu)= \{\E\|\xi\|_\C^p\}^{\ff 1 p}.$$
We deduce form \eqref{OY4} that
\beg{align*} |(P_Tf)(\mu)- (P_T f)(\nu) | &\le \|f\|_\infty \sup_{s\in [0,1]} \big(\E|D^*( h_{\xi, \mu^{\xi,s}})|^2  \big)^{\ff 1 2}\\
&\le \|f\|_\infty \W_p(\mu,\nu) \sup_{s\in [0,1]} \ss{\aa_T(\|\mu^{\xi,s}\|_p)}.\end{align*}
Combining this with
\beg{align*} \|\mu^{\xi,s}\|_p&= \{\E\|X_0^\mu+s\xi\|^p_\C\}^{\ff 1 p} =   \{\E\|s(X_0^\mu+\xi)+(1-s) X_0^\mu\|^p_\C\}^{\ff 1 p}\\
&\le  (1-s)\{\E\|X_0^\mu\|^p_\C\}^{\ff 1 p}+ s \{\E\|X_0^\mu+\xi\|^p_\C\}^{\ff 1 p}\le \|\mu\|_p\lor\|\nu\|_p,\ \ s\in [0,1],\end{align*}
we prove   \eqref{OY0}.

(2) Let $f\in C_b^1(\C)$, $\mu\in \scr P_p(\C)$  and $\phi\in T_{\mu,p}$. Applying  \eqref{OYY} with $\xi= \phi(X_0^\mu)$ and $s=0$,  we obtain  \eqref{OBS}, which, together with \eqref{OY2}, implies
$$|D_\phi^L (P_Tf)(\mu)|^2\le  \aa_T(\|\mu\|_p) \{\E\|\phi(X_0^\mu)\|_\C^p\}^{\ff 2 p} \E[f^2(X_T^\mu)] = \aa_T(\|\mu\|_p)   \|\phi \|_{T_{\mu,p}}^2(P_Tf^2)(\mu),\ \ \phi\in T_{\mu,p}.$$ Therefore,
\eqref{OY} holds true.

(3) Let $f\in C_b(\C)$. To prove that $(P_Tf)$ is $L$-differentiable, it suffices to verify
\beq\label{OOY} I_\mu(\phi):= \ff{|(P_Tf)(\mu\circ({\rm Id}+\phi)^{-1}) -(P_Tf)(\mu) -\gg_\phi|}{\|\phi\|_{T_{\mu,p}}}\to 0\ \text{as}\ \|\phi\|_{T_{\mu,p}}\downarrow 0,\end{equation} where
$$\gg_\phi:= \E[f(X_T^{\mu}) D^*( h_{\phi(X_0^\mu), \mu})\big],\ \ \phi\in T_{\mu,p}.$$
By \eqref{OY4} and the definition of $\gg_\phi$, it is easy to  see that
\beq\label{SCR} I_\mu(\phi) \le A_\mu(\phi)+ B_\mu(\phi)\end{equation} holds  for
\beg{align*} &A_\mu(\phi):= \ff 1 {\|\phi\|_{T_{\mu,p}}} \int_0^1 \E\big[\big|\big\{f(X_T^{s\phi(X_0^\mu),\mu} ) -f(X_T^\mu)\big\}D^*( h_{\phi(X_0^\mu),\mu})\big|\big]\d s,\\
& B_\mu(\phi):=    \ff {\|f\|_\infty} {\|\phi\|_{T_{\mu,p}}}\int_0^1
  \big( \E[|D^*( h_{\phi(X_0^\mu),\mu\circ({\rm Id}+s\phi)^{-1}})- D^*( h_{\phi(X_0^\mu),\mu})|^2]\big)^{\ff 1 2}\d s.\end{align*}
  Since $f\in C_b(\C)$,  and \eqref{D3} implies  $\E\|X_T^{s\phi(X_0^\mu),\mu}  - X_T^\mu\|_\C^p\to 0$ as $\|\phi\|_{T_{p,\mu}}\to 0$,  it follows from \eqref{OY2} and the dominated convergence theorem that
 $$\lim_{\|\phi\|_{T_{\mu,p}}\to 0} A_\mu(\phi) =0.$$
   Finally, \eqref{OY2'} implies $\lim_{\|\phi\|_{T_{\mu,p}}\to 0} B_\mu(\phi) =0.$ Therefore,  \eqref{OOY} follows from \eqref{SCR}.
\end{proof}

\paragraph{Remark 6.1}  When  $r_0=0$ (i.e. without memory), the Bismut formula for the $L$-derivative has been establish in \cite{RW19} for all $f\in \B_b(\C)$, by applying   a   formula like \eqref{OY4} for small $\vv>0$ replacing $T$.
However, in the present case     \eqref{OY4} is available merely for $T>r_0$, so that this technique is invalid. So, in Theorem \ref{TNN} we   only   establish the Bismut formula of the $L$-derivative for $f\in C_b(\C)$.

\subsection{Additive noise: non-degenerate case}

\beg{thm}\label{TN1} Assume   {\bf (B)} for some $p\ge 2$, and consider $\eqref{E1}$ with
 $\si(t,\xi,\mu)=\si(t)$   independent of $(\xi,\mu)$ such that $(\si\si^*)(t) $ is invertible with $(\si\si^*)^{-1}(t)$ locally bounded in $t$.
\beg{enumerate} \item[$(1)$] There exist  an increasing function $C: [r_0,\infty)\to [0,\infty)$ and a constant $c>0$ such that for any $T>r_0$,  $f\in \B_b(\C),$ and $\mu,\nu\in \scr P_p(\C),$
\begin{equation}\label{DL1}\begin{split} &|(P_Tf)(\mu)-(P_Tf)(\nu)|\\&\le C(T)\|f\|_\infty \Big\{1+(T-r_0)^{-\ff 12}+K_2(c(1+  \|\mu\|_p+\|\nu\|_p  ))\\
&\quad+  (\|\mu\|_p+\|\nu\|_p)^{\ff{p-2}2} \Big\} \W_p(\mu,\nu).\end{split}\end{equation}
\item[$(2)$] For any
  $T>r_0$ and $f\in C_b(\C)$, $(P_Tf)(\mu)$ is $L$-differentiable in $\mu\in \scr P_p(\C)$ such that
  \beq\label{WW1}  D_\phi^L(P_Tf)(\mu)=- \E\bigg( f(X_T^\mu) \int_0^T \<\{\si^*(\si\si^*)^{-1}\}(t)H^\phi(t), \d W(t)\>\bigg),\ \ \phi\in T_{\mu,p}\end{equation}
holds for
\begin{equation*}\begin{split}H^\phi(t):&=  \big\{(\nn_{Z_t} b)(t,\cdot,  \mu_t)\big\}(X_t^\mu) +(\E[ _{\C^*}\<D^Lb(t,\xi, \cdot)( \mu_t)(X_t^\mu), Z_t\>_\C] )|_{\xi= X_t^\mu}\\
&\quad+\ff{\phi(X_0^\mu)(0)1_{[0,T-r_0]}(t)}{T-r_0},\end{split}\end{equation*} where $\mu_t:=\L_{X_t^\mu}$ and
 $(Z_t)_{t\ge 0}$ is the segment of $(Z(t))_{t\ge -r_0}$ given by
 $$Z(t) := \beg{cases} \phi(X_0^\mu)(t), &\text{if} \ t\in [-r_0,0],\\
 \ff{(T-r_0-t)^+}{T-r_0} \phi(X_0^\mu)(0), &\text{if}\ t\ge 0.\end{cases} $$
 Consequently, there exist  an increasing function $C: [r_0,\infty)\to (0,\infty)$  and a constant $c>0$   such that
\beq\label{NM} \|D^L(P_Tf)(\mu)\|_{T_{\mu,p}^*} \le C(T)\big\{1+(T-r_0)^{-\ff 12}+K_2(c(1+\|\mu\|_p))+ \|\mu\|_p^{\ff{p-2}2} \big\} \{(P_Tf^2)(\mu)\}^{\ff 1 2}\end{equation} holds for all $T>r_0,\  f\in C_b(\C)$ and $  \mu\in \scr P_p(\C).$  \end{enumerate}
 \end{thm}

 \beg{proof}   To apply Theorem \ref{TNN},   for any $\mu\in \scr P_p(\C)$ and $\xi\in L^p(\OO\to\C,\F_0,\P)$,  let
 \beq\label{AAP} h_{\xi,\mu}(t): =- \int_0^t  \big\{\si^*(\si\si^*)^{-1}\big\}(s)H^{\xi,\mu} (s) \d s,\ \ t\in [0,T],\end{equation} where
 \beq\label{APP@} \beg{split}  &H^{\xi,\mu}(t):=  \{(\nn_{Z_t^\xi} b)(t,\cdot,  \mu_t)\}(X_t^\mu) +(\E[ _{\C^*}\<D^Lb(t,\eta, \cdot)( \mu_t)(X_t^\mu), Z_t^\xi\>_\C] )|_{\eta= X_t^\mu}\\
&\qquad\qquad\qquad +\ff{\xi(0)1_{[0,T-r_0]}(t)}{T-r_0},\\
 & Z^\xi (t) :=  \xi(t)1_{ [-r_0,0]}(t) +
 \ff{(T-r_0-t)^+}{T-r_0} \xi(0) 1_{(0,\infty)}(t). \end{split}\end{equation}
 By {\bf (B)}, the boundedness of $(\si\si^*)^{-1}(t)$ in $t\in [0,T]$, and the definition of $H^{\xi,\mu}(t)$, we find a constant  $c_1=c_1(T)>0$ increasing in  $T$ such that
\beq\label{*DLP}  |\dot h_{\xi,\mu}(t)|^2\le c_1   \|\xi\|_\C^2 \big\{(T-r_0)^{-2}1_{[0,T-r_0]}(t)+ \|X_t^\mu\|_\C^{p-2} +K_2( \|\mu_t\|_p)^2\big\},\ \  t\in [0,T]. \end{equation}
Note that \eqref{ESTY} and $\mu\in \scr P_p(\C)$ imply
$$\sup_{t\in [0,T]}  \|\mu_t\|_p\le c\, (1\vee\|\mu\|_p)$$ for some constant $c=c(T)>0$ increasing in $T$. This, combining  \eqref{ESTY} with  \eqref{ITP'} and \eqref{*DLP}, yields
\beq\label{*DD2}\beg{split}  \E |D^*(h_{\xi,\mu})|^2 &= \E\int_0^T |\dot h_{\xi,\mu}(t)|^2  \d t\\
 &\le c_2 (\E\|\xi\|_\C^p)^{\ff 2 p} \big\{(T-r_0)^{-1} + (\E \|X_t^\mu\|_\C^p)^{(p-2)/p} + K_2(c\, (1\vee\|\mu\|_p))^2\big\}\\
&\le c_3 (\E \|\xi\|_\C^p )^{\ff 2 p}   \big\{1+(T-r_0)^{-1}+  \|\mu\|_p^{p-2}+K_2(c\, (1\vee\|\mu\|_p))^2\big\}<\infty\end{split}\end{equation}
for some constants $c_2=c_2(T),c_3=c_3(T)>0$ increasing in $T$.

Note that $(Z^\xi_t)_{t\in[0,T]}$ is the functional solution to the SDE with memory
\beq\label{PL} \beg{split}  \d Z^\xi(t)=  &\Big\{\{(\nn_{Z^\xi_t} b)(t,\cdot,   \mu_t)\}(X_t^\mu)  + \si(t) \dot h_{\xi,\mu}(t)\\
&\qquad +(\E[ _{\C^*}\<D^Lb(t,\eta, \cdot)( \mu_t)(X_t^\mu), Z^\xi_t\>_\C] )|_{\eta= X_t^\mu} \Big\} \d t,
  ~~~ t\in [0,T], \  Z_0^\xi= \xi.\end{split} \end{equation}
On the other hand, by   Lemmas \ref{Lem5} and   \ref{Lem}, the process
$$  \nn_\xi X^\mu(t)- w^{h_{\xi,\mu}}(t),\ \ t\in [0,T] $$ also solves \eqref{PL} with the same initial value $\xi$. By the uniqueness of   \eqref{PL} and $Z^\xi_T={\bf0}$, we derive
 $\nn_\xi X^\mu_T=w^{h_{\xi,\mu}}_T,$ that is, \eqref{OY1} holds.  Moreover, \eqref{ESTY'} implies
 $$\W_p(\mu_t,\nu_t) \le c\, \W_p(\mu,\nu),\ \ t\in [0,T]$$ for some constant $c>0$, where $\nu_t:=\L_{X_t^\nu}$, so that  \eqref{AAP}, \eqref{APP@} and    the continuity of $b(t,\xi,\mu)$ in $\mu$ imply  \eqref{OY2'}.
 Therefore, the desired assertions follow from  Theorem \ref{TNN} and  \eqref{*DD2}.
   \end{proof}

\subsection{Additive noise: a degenerate case}
As generalizations to the stochastic Hamiltonian system \cite{GW}  and the counterpart  with memory  \cite{BWY13} as well as the distribution dependent model \cite{RW19},  we
consider the following distribution-path dependent stochastic Hamiltonian system for $X(t)=(X^{(1)}(t), X^{(2)}(t))$  on $\R^{l+m}:=\R^l\times \R^m$, which goes back to \eqref{E1} for $d=l+m$:
\beq\label{E5} \beg{cases} \d X^{(1)}(t)= b^{(1)}(t,X(t))\d t,\\
\d X^{(2)}(t) = b^{(2)}(t,X_t, \L_{X_t})\d t +\si(t) \d W_t,\end{cases}\end{equation} where
  $(W(t))_{t\ge 0}$ is an $m$-dimensional Brownian motion on a complete
filtration   probability space $(\OO,\F,(\F_t)_{t\ge0},\P)$,  for each $t\ge 0$, $\si(t)$ is an invertible $m\times m$-matrix, and
$$b = (b^{(1)}, b^{(2)}): [0,\infty)\times \C \times \scr P_p(\C) \to  \R^{l+m}$$ is measurable with $b^{(1)}(t,\xi,\mu)= b^{(1)}(t,\xi(0))$ dependent only on $t$ and $\xi(0)$.
Let $\nn=(\nn^{(1)}, \nn^{(2)})$ be the gradient operator on $\R^{l+m}$, where $\nn^{(i)}$ stands for  the gradient operator  w.r.t. the $i$-th component, $i=1,2$.
Let $\nn^2=\nn\nn$ denote the Hessian operator on $\R^{l+m}$. We assume

\beg{enumerate}\item[{\bf (H1)}] For every $t\ge 0$, $\si(t)$ is invertible, $b^{(1)} (t,\cdot)\in C^2(\R^{l+m}\to\R^l),$ $b^{(2)}(t,\xi,\mu)$  is $C^1$ in both $\xi\in \C$ and $\mu\in \scr P_p(\C)$,
  and there exists an increasing function $K: [0,\infty)\to [0,\infty)$ such that 
\begin{equation*}\begin{split}\|\{(\nn b^{(1)})(t,\cdot,\mu)\}(\xi(0))\|&+  \|\{(\nn^2 b^{(1)})(t,\cdot)\}(\xi(0))\|+\|\{(\nn b^{(2)})(t,\cdot,\mu)\}(\xi)\|\\
&+ \|D^L b^{(2)}(t,\xi,\cdot)(\mu)\|_{T_{\mu,p}^*} +\|\si(t)\|+\|\si(t)^{-1}\|  \le K(t)\end{split}\end{equation*} holds for all $ t\ge 0$ and $ (\xi,\mu)\in\C\times \scr P_p(\C)$.
  \end{enumerate}
Obviously,  the  assumption {\bf (H1)} implies  {\bf (B)} for   the SDE \eqref{E5}.

For any $\mu\in \scr P_p(\C)$,  let $(X_t^\mu)_{t\ge0}$ be the functional solution to \eqref{E5} with $\L_{X_0^\mu}=\mu$, and denote $\mu_t=\L_{X_t^\mu}$ as before.
To establish the Bismut formula for the $L$-derivative of $(P_Tf)(\mu):=\E f(X_T^\mu)$, we shall   follow the line of \cite{RW19,WZ13}, where the case without memory  was investigated.
To establish the  Bismut formula,   we need the following assumption {\bf (H2)}, which implies the hypoellipticity.

\beg{enumerate} \item[{\bf (H2)}] There exist  an $ l\times m$-matrix $B$  and some constant $\vv\in(0,1)$ such that
 \beq\label{B} \<(\nn^{(2)} b^{(1)})(t,\cdot) - B)B^* a,a\>\ge -\vv |B^*a|^2,\ \ \forall a\in \R^l.\end{equation}
Moreover,  there exists an increasing function $\theta_\cdot\in C([0,T-r_0];\R_+)$ 
such that
\beq\label{B2} \int_0^t s(T-r_0-s) K_{T-r_0,s}BB^*K_{T-r_0,s}^*\d s\ge \theta_t  I_{l\times l},\ \ t\in [0, T-r_0],\end{equation}
where,
 for any $s\ge 0,$  $(K_{t,s})_{t\ge s}$ solves the following linear random ODE on $\R^l\otimes\R^l$:
\begin{align}
\ff{\d}{\d t}K_{t,s}=  (\nn^{(1)}b^{(1)})(t,X(t)) K_{t,s},\ \ \ t\ge s,  K_{s,s}=I_{l\times l} \label{Eq1}
\end{align} with  $I_{l\times l}$ being  the $l\times l$  identity matrix.

\end{enumerate}

Specific  examples for  $b^{(1)}$   satisfying   {\bf (H2)}  are included in  \cite[Example 2.1]{RW19}.
Let $T>r_0$.
According to the proof of \cite[Theorem 1.1]{WZ13}, {\bf (H2)} implies that  the $l\times l$ matrices
$$Q_t:=\int_0^t s(T-r_0-s) K_{T-r_0,s}(\nn^{(2)}b^{(1)})(s,X^\mu(s) )B^* K_{T-r_0,s}^* \d s,\ \   t\in (0,T-r_0]$$ are invertible with
\beq\label{Q}\|Q_t^{-1}\|\le\ff 1 { (1-\vv)\theta(t) },\ \ t\in (0,T-r_0].\end{equation}
To apply Theorem \ref{TNN}, for any $\xi=(\xi^{(1)},\xi^{(2)})\in L^p(\OO\to\C,\F_0,\P)$, we need to construct $h_{\xi,\mu}\in \D(D^*)$ such that \eqref{OY1} holds. To this end, as in \cite{RW19}, where $r_0=0$ is concerned, we take the $\C$-valued
process
$\aa_t=(\aa^{(1)}_t, \aa^{(2)}_t)$, which is the segment of $\aa(t)$ defined by $\aa(t)=\xi(t)$ for $t\in [-r_0,0]$ and
\beg{equation}\label{aa}\beg{split} &\aa^{(2)} (t) :=  \ff{(T-r_0-t)^+} {T-r_0} \xi^{(2)}(0) -\ff{t(T-r_0-t)^+ B^*K_{T-r_0,t}^*}{\int_0^{T-r_0} \theta_s^2 \d s} \int_t^{T-r_0}  \theta_s^2 Q_s^{-1} K_{T-r_0,0}\xi^{(1)}(0)\d s\\
&-t(T-r_0-t)^+ B^* K_{T-r_0,t}^*Q_{T-r_0}^{-1}\int_0^{T-r_0}\ff{T-r_0-s}{T-r_0}K_{T-r_0,s}\Big(\nn^{(2)}_{\xi^{(2)} (0)} b^{(1)}\Big)(s,X^\mu(s))\d s,  \\
& \aa^{(1)}(t):= 1_{[0,T-r_0]}(t) \bigg(K_{t,0}\xi^{(1)} (0)+\int_0^tK_{t,s}\Big(\nn^{(2)}_{\aa^{(2)}(s)} b^{(1)}\Big)(s,\cdot)(X^\mu(s))\,\d s\bigg),\ \ t\ge 0. \end{split}\end{equation}
Now, let $(h_{\xi,\mu}(t),w^{h_{\xi,\mu}}(t))_{t\in [0,T]}$ be the unique solution to   the random ODEs
\beq\label{B00}\beg{split}&\dot h_{\xi,\mu}(t):= \ff{\d h_{\xi,\mu}(t)}{\d t} =  \si(t)^{-1} \Big\{\{(\nn_{\aa_t} b^{(2)})(t,\cdot, \mu_t)\}(X_t^\mu) -\dot \aa^{(2)}(t)\\
&\qquad\qquad\qquad \qquad~~~ + \big(\E_{\C^*}\<D^L b^{(2)}(t,\eta,\cdot)(\mu_t)(X_t^\mu), \aa_t+w_{t}^{h_{\xi,\mu}}\>_\C\big)\big|_{\eta=X_t^\mu}\Big\},\\
&\ff{ \d w^{h_{\xi,\mu}}(t)}{\d t} = \bigg(\Big(\nn_{w^{h_{\xi,\mu}}(t)} b^{(1)}\Big) (t,X^\mu(t) ),  \Big(\nn_{w_{t}^{h_{\xi,\mu}}} b^{(2)}\Big)(t,\cdot,\mu_t)(X_t^\mu) +   \si(t) \dot h_{\xi,\mu}(t)\Big),\\\
&\ \ \ h_{\xi,\mu}(0)={\bf0}\in\R^m, \ \ w_0^{h_{\xi,\mu}}={\bf0}\in\C. \end{split}\end{equation}
Let $u^\xi(t)=((u^\xi)^{(1)}(t),(u^\xi)^{(2)}(t))=\aa(t)+ w^{h_{\xi,\mu}}(t),t\ge-r_0$. Then, \eqref{B00} implies
\begin{equation*}
\begin{split}
(u^\xi)^{(2)}(t)
&=\aa^{(2)}(0)+\int_0^t\Big\{ \big\{(\nn_{u^\xi_t} b^{(2)})(s,\cdot,\mu_s)\big\}(X_s^\mu)\\
&\quad~~~~~~~~~+\big(\E_{\C^*}\<D^L b^{(2)}(s,\eta,\cdot)(\mu_s)(X_s^\mu), v_{s}^{\xi}\>_\C\big)\big|_{\eta=X_s^\mu}\Big\}\d s.\\
\end{split}
\end{equation*}
Furthermore, we have
\begin{equation*}
\begin{split}
(u^\xi)^{(1)}(t)&=\aa^{(1)}(t)+\int_0^t\big\{(\nn_{w^{h_{\xi,\mu}}(s)} b^{(1)}) (s,\cdot )\big\}(X^\mu(s))\d s\\
&=\aa^{(1)}(t)-\int_0^t\big\{(\nn_{\aa(s)} b^{(1)}) (s,\cdot)\big\}(X^\mu(s)) \d s+\int_0^t\big\{(\nn_{u^\xi(s)} b^{(1)}) (s,\cdot )\big\}(X^\mu(s))\d s\\
&=\aa^{(1)}(0)+\int_0^t\big\{(\nn_{u^\xi(s)} b^{(1)}) (s,\cdot )\big\}(X^\mu(s))\d s,
\end{split}
\end{equation*}
where in the last identity we used
\begin{equation*}
\d \aa^{(1)}(t)=\big\{(\nn_{\aa(s)} b^{(1)}) (t,\cdot )\big\}(X^\mu(t))\d t,
\end{equation*}
see the proof of \cite[Theorem 2.3]{RW19} for more details.
Moreover, the equation  \eqref{D007}  for $v^\xi(t)= ((v^\xi)^{(1)}(t), (v^\xi)^{(2)}(t))$ associated with the present SDE  \eqref{E5}    becomes
{\begin{equation*} \beg{split}
&\ff{d}{\d t}  (v^\xi)^{(2)}(t)= \big\{(\nn_{v_t^\xi} b^{(2)})(t,\cdot,\mu_t)\big\}(X_t^\mu) +   \big(\E_{\C^*}\<D^L b^{(2)}(t,\eta,\cdot)(\mu_t)(X_t^\mu), v_{t}^{\xi}\>_\C\big)\big|_{\eta=X_t^\mu},\\
& \ff{d}{\d t}  (v^\xi)^{(1)}(t)=  \big\{(\nn_{v^\xi(t)} b^{(1)} )(t,\cdot)\big\}(X^\mu(t) ), \ \  v^\xi_0=\xi. \end{split}\end{equation*}
Hence, the uniqueness of this equation   implies
\begin{equation}\label{bb2} v^\xi(t)= w^{h_{\xi,\mu}}(t)+\aa(t), \ \ t\ge 0.\end{equation}
Obviously, $\aa^{(2)}(t)={\bf 0}$ for $t\ge T-r_0.$ On the other hand, inserting the expression of $\aa^{(2)}(t)$ into $\aa^{(1)}(T-r_0)$, taking the definition of $Q_t$ and changing the order of integral yields $\aa^{(1)}(T-r_0)={\bf0}$, which further implies $\aa^{(1)}(t)={\bf0}$, $t\ge T-r_0,$ according to the definition of $\aa^{(1)}.$ Hence, we arrive at $\aa(t)={\bf0}$ for $t\ge T-r_0$. This, combining Lemma \ref{Lem} with \eqref{bb2}, leads to
$$\nn_\xi X^\mu_T= v^\xi_T= w^{h_{\xi,\mu}}_T,$$ that is,
  \eqref{OY1} holds.  Moreover, as shown in the proof of \cite[Theorem 1.1]{WZ13} that
  $h_{\xi,\mu}\in \D(D^*)$ satisfies  \eqref{OY2'}, and  for small $T-r_0>0$,
  $ \E|D^*(h_{\xi,\mu})|^2$ has the same order as $\E \int_0^{T-r_0} |\dot h_{\xi,\mu}(t)|^2\d t$, so that according to the construction of $h_{\xi,\mu}$ we have
$$ \E |D^*(h_{\xi,\mu})|^2   \le \ff{C(T)(T-r_0)^4}{\int_0^{T-r_0}\theta_s^2\d s},\ \ T>0, \xi\in L^p(\OO\to\C,\F_0,\P), \mu\in \scr P_p(\C)$$
for some increasing function $C: [r_0,\infty)\to [0,\infty)$.  Therefore, by Theorem \ref{TNN}, we have the following result.

\beg{thm}\label{T4.2} Assume {\bf (H1)} and {\bf (H2)} for some  $p\ge 2$.
\beg{enumerate} \item[$(1)$] There exists an increasing function $C: [r_0,\infty)\to [0,\infty)$ such that for any $T>r_0$, $f\in \B_b(\C),$
$$ |(P_Tf)(\mu)-(P_Tf)(\nu)|\le  C(T)(T-r_0)^2 \bigg(\int_0^{T-r_0}\theta_s^2\d s\bigg)^{-\ff 1 2}  \|f\|_\infty  \W_p(\mu,\nu),\ \  \mu,\nu\in \scr P_p(\C).$$
\item[$(2)$] For any
  $T>r_0$ and $f\in C_b(\C)$, $(P_Tf)(\mu)$ is $L$-differentiable in $\mu\in \scr P_p(\C)$ such that
 $$  D_\phi^L(P_Tf)(\mu)=- \E\big[f(X_T^\mu) D^*(h_{\phi(X_0^\mu),\mu})\big],\ \ \phi\in T_{\mu,p},$$
  and  there exists an increasing function $C: [r_0,\infty)\to (0,\infty)$    such that for any $f\in C_b(\C),  T>r_0$ and $   \mu\in \scr P_p(\C),$
 $$ \|D^L(P_Tf)(\mu)\|_{T_{\mu,p}^*} \le C(T)(T-r_0)^2\bigg(\int_0^{T-r_0}\theta_s^2\d s\bigg)^{-\ff 1 2} \{(P_Tf^2)(\mu)\}^{\ff 1 2}.$$  \end{enumerate}
 \end{thm}

 \subsection{Multiplicative  noise}

 In this subsection, we assume $\si(t,\xi,\mu)= \si(t,\xi(0))$.  Following the line of \cite{BWY} due to the idea of \cite{W07},  for any $\xi\in L^p(\OO\to\C,\F_0,\P)$ we consider the SDE with memory
 \beq\label{WW3} \beg{split}  \d U^\xi (t)= &\,\Big\{\big\{\big(\nn_{U^\xi_t} b\big)(t,\cdot, \mu_t)\big\}(X_t^\mu) +  (\E_{\C^*}\<D^L b(t,\eta,\cdot)( \mu_t)(X_t^\mu), U^\xi_t \>_\C)\big|_{\eta=X_t^\mu}\\
 &-\ff{U^\xi(t)}{T-r_0-t}\Big\} 1_{[0,T-r_0)}(t) \d t +\big\{\big(\nn_{U^\xi(t)} \si\big)(t, \cdot)\big\} (X^\mu(t)) \d W(t),\ \ U^\xi_0=\xi.\end{split}\end{equation}
 Then, due to $(B_3)$,  the SDE \eqref{WW3} has a unique solution for $t<T-r_0$. By repeating the proofs  of  \cite[Lemma 2.1 and Theorem 1.2(1)]{BWY}, we have
 \beq\label{BJH} \int_0^{T-r_0}\ff{\E|U^\xi(t)|^2}{(T-r_0-t)^2}\d t+ \E\Big(\sup_{t\in [0,T-r_0)}\|U^\xi_t\|_\C^p\Big)
   \le \ff{C(T)}{T-r_0} \big\{\E\|\xi\|_\C^p\big\}^{\ff 2 p} \end{equation}
  for some increasing function  $C: [r_0,\infty)\to [0,\infty)$,  so that   we may extend $U^\xi(t)$ for $t\in [0,T]$ by setting
 \beq\label{BJH2} U^\xi(t)={\bf0},\ \ t\in [T-r_0,T],\end{equation}  which obviously solves \eqref{WW3} up to time $T$.

 \beg{thm}\label{TN2} Assume  {\bf (B)} for some $p\ge 2$. Let
$\si(t,\xi,\mu)=\si(t,\xi(0))$  depend only on $t$ and $\xi(0)$ such that, for each $x\in\R^d,$  $(\si\si^*)(t,x) $ is invertible with $\sup_{x\in\R^d}\|(\si\si^*)^{-1}\|(t,x)$   locally bounded in $t$. Then,
\beg{enumerate} \item[$(1)$] There exists an increasing function $C: [r_0,\infty)\to [0,\infty)$ such that for any $T>r_0$, $f\in \B_b(\C),$ and $\mu,\nu\in \scr P_p(\C),$
\beq\label{DL1} |(P_Tf)(\mu)-(P_Tf)(\nu)|\le \ff{C(T)}{\ss{T-r_0}}\|f\|_\infty 
 \W_p(\mu,\nu).\end{equation}
\item[$(2)$] For any
  $T>r_0$ and $f\in C_b(\C)$, $(P_Tf)(\mu)$ is $L$-differentiable in $\mu\in \scr P_p(\C)$ such that
  \beq\label{WW1}  D_\phi^L(P_Tf)(\mu)=- \E\bigg( f(X_T^\mu) \int_0^T \<\{\si^*(\si\si^*)^{-1}\}(t)H^\phi(t), \d W(t)\>\bigg),\ \ \phi\in T_{\mu,p}\end{equation}
holds for
\beg{align*} H^\phi(t):= &\,\Big\{\big\{\big(\nn_{U^\xi_t} b\big)(t,\cdot, \mu_t)\big\}(X_t^\mu) +\big(\E _{\C^*}\<D^Lb(t,\eta, \cdot)(\mu_t)(X_t^\mu), U^\xi_t\>_\C\big)\big|_{\eta= X_t^\mu}\Big\}1_{[T-r_0,T]}(t) \\
&+\ff{U^\xi(t)}{T-r_0-t}1_{[0,T-r_0)}(t),\ \ t\in [0,T]. \end{align*}
 Consequently, there exists an increasing function $C: [r_0,\infty)\to (0,\infty)$    such that
\beq\label{NM} \|D^L(P_Tf)(\mu)\|_{T_{\mu,p}^*} \le \ff{C(T)}{\ss{T-r_0}}
 \{(P_Tf^2)(\mu)\}^{\ff 1 2}\end{equation} holds for all $T>r_0,\  f\in C_b(\C)$ and $  \mu\in \scr P_p(\C).$  \end{enumerate}
 \end{thm}

 \beg{proof}    To apply Theorem \ref{TNN}, for any $\mu\in \scr P_p(\C)$ and $\xi\in L^p(\OO\to\C,\F_0,\P)$, let
\beq\label{AP} h_{\xi,\mu} (t)= \int_0^t   \{\si^*(\si\si^*)^{-1}\}(s,X^\mu(s))G^{\xi}(s)  \d s,\ \ t\in [0,T],\end{equation} where
 \beg{align*} G^{\xi}(t):= &\,\Big\{\big\{\big(\nn_{U^\xi_t} b\big)(t,\cdot, \mu_t)\big\}(X_t^\mu) +\big(\E _{\C^*}\<D^Lb(t,\eta, \cdot)(\mu_t)(X_t^\mu), U^\xi_t\>_\C\big)\big|_{\eta= X_t^\mu}\Big\}1_{[T-r_0,T]}(t) \\
&+\ff{U^\xi(t)}{T-r_0-t}1_{[0,T-r_0)}(t),\ \ t\in [0,T]. \end{align*}
Then, $h$ is adapted and,  by \eqref{BJH}, we find some increasing function $C: [r_0,\infty)\to (0,\infty)$ such that
\beq\label{HBJ} \E\int_0^T |\dot h_{\xi,\mu}(t)|^2 \d t\le \ff{C(T)}{T-r_0} \big\{\E\|\xi\|_\C^p\big\}^{\ff 2 p},\ \ T>r_0, \mu\in \scr P_p(\C), \xi\in L^p(\OO\to\C,\F_0,\P) \end{equation}
so that \eqref{OY2} holds true. Moreover,
by the regularities of $b$ and $\si$ ensured by {\bf (B)},  the condition \eqref{OY2'} holds. Therefore, according to Theorem  \ref{TNN}, it remains to verify \eqref{OY1}.
By \eqref{WW3}, Lemma  \ref{Lem5} and Lemma \ref{Lem}, we see  that both
 $U^\xi(t)$ and $\nn_\xi X_t^\mu- w^{h_{\xi,\mu}}(t)$  solve the SDE with memory
  \beg{align*} \d Z (t)= &\,\Big\{\big\{\big(\nn_{Z_t} b\big)(t,\cdot, \mu_t)\big\}(X_t^\mu) -\si(t,X^\mu(t))\dot h(t)\Big\}\d t + \big\{\big(\nn_{Z(t)}\si\big)(t,\cdot)\big\}(X^\mu(t))\d W(t) \\& + \Big\{ \big(\E[_{\C^*}\<D^L b(t,\eta,\cdot)(\mu_t)(X_t^\mu), Z_t \>_\C]\big)\big|_{\eta=X_t^\mu}\Big\}\d t,
  \ \ Z_0=\xi,t\in [0,T].\end{align*}
 By the uniqueness of solution to this equation and \eqref{BJH2},  we obtain \eqref{OY1} and hence finish the proof.
 \end{proof}

 \section{Asymptotic Bismut formula for the $L$-derivative}

 In this section, we aim to extend the asymptotic Bismut formula derived in \cite{KS} for SDEs with memory to that on  the $L$-derivative  for distribution-path dependent SDEs.
  Coming back to SDEs with memory, our conditions are slightly weaker since
 we allow the drift terms to be non-Lipschitz continuous.

\subsection{The non-degenerate setup}

In this subsection, we assume that $\si(t,\xi,\mu)=\si(t,\xi)$  depends only on $t\ge 0$ and $\xi\in \C$.   For any  $\ll\ge 0,\mu\in \scr P_p(\C)$ and $\phi\in T_{\mu,p}$,
consider the following SDE with memory
\begin{equation}\label{E10}
\begin{split}
 \d Z^{\mu,\phi,\ll}(t)&=\big\{ \{(\nn_{Z_t^{\mu,\phi,\ll}}b)(t, \cdot, \mu_t)\}(X_t^\mu)-\ll Z^{\mu,\phi,\ll}(t)\big\}\d t\\
&\quad+
\{(\nn_{Z_t^{\mu,\phi,\ll}}\si) (t,\cdot)\}(X_t^\mu)\d W(t), \quad Z_0^{\mu,\phi,\ll}= \phi(X_0^\mu),~t\ge0.
\end{split}
\end{equation}
According to \cite[Theorem 2.3]{VS},   $(B_3)$  implies that  \eqref{E10} has a unique functional solution $(Z^{\mu,\phi,\ll}_t)_{t\ge0}$ such that
\begin{equation}\label{B1}
\E\Big(\sup_{0\le s\le t}\|Z_s^{\mu,\phi,\ll}\|_\C^p\Big)<\infty,\ \ t>0,~ \phi\in T_{\mu,p},~\ll>0.
\end{equation}

\begin{thm}\label{th2}  Assume {\bf (B)}  for some  $p\ge 2$  such that  $(B_3)$ holds  for some constant  $K$ uniformly in $T>0$.  Moreover,    suppose that
   $(\si\si^*)(t,\xi) $ is invertible with $\sup_{\xi\in\C}\|(\si\si^*)^{-1}\|(t,\xi)$   locally bounded in $t$.
\beg{enumerate} \item[$(1)$] For any $T>0$ and  $f\in C_p^1(\C),$ $(P_Tf)(\mu)$ is $L$-differentiable in $\mu\in \scr P_p(\C),$ such that
for any $ \mu\in \scr P_p(\C), \phi\in  T_{\mu,p}$ and $f\in C_p^1(\C),$
\begin{equation}\label{J1}
D_\phi^L(P_Tf)(\mu)= \E\bigg(f(X_T^\mu)\int_0^T\<\dot{h}^{\mu,\phi,\ll}(t),\d
W(t)\>\bigg)+ \E\big (\nn_{Z_T^{\mu,\phi,\ll}}f\big)(X_T^\mu),~~~ \ll\ge 0,
\end{equation}
where
\begin{equation}\label{0F0}
\begin{split}
h^{\mu,\phi,\ll}(t):&= \int_0^t\big\{\si^*(\si\si^*)^{-1}\big\}(s,X^\mu_s) \Big\{ \big(\E_{\C^*}\<D^Lb(s, \xi, \cdot)(\mu_s)(X^\mu_s),D_\phi^L
X^\mu_s\>_{\C}\big)\Big|_{ \xi=X^\mu_s}\\
&\quad~~~~~~~+\ll Z^{\mu,\phi,\ll}(s)\Big\}\d s,~~~t\ge0. \end{split}\end{equation}
 \item[$(2)$] If either $p>4$ or $p>2$ but $\|\nn b(t,\cdot,\mu)(\xi)\|$ is bounded,   then for any $\dd>0$ there exist constants $c,\ll_0>0$ such that
\begin{equation}\label{J4}\beg{split}
&\bigg|D_\phi^L(P_Tf)(\mu) - \E\bigg(f(X_T^\mu)\int_0^T\<\dot{h}^{\mu,\phi,\ll}(s),\d W(s)\>\bigg)\bigg|\\
&\le  c\, \e^{ -\dd T} \big\{(P_T\|\nn f\|^{\ff p{p-1}})(\mu)\big\}^{\ff{p-1}p}
 \|\phi\|_{T_{\mu,p}},    \  \ \ll\ge \ll_0, T>0,~~ f\in C_p^1(\C),\end{split}
\end{equation}
 \item[$(2)$] If $p\in [2,4]$ and  
 \beq\label{DFF} K< \sup_{\aa>0}  \ff \aa{p(p-1  + 32 p\e^{\aa r_0} )\e^{\aa r_0}}, \end{equation}  then there exist constants $c,\dd,\ll_0>0$ such that
 $\eqref{J4}$ holds.  \end{enumerate}
 \end{thm}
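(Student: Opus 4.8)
The plan for the first assertion: the $L$-differentiability of $\mu\mapsto(P_Tf)(\mu)$ on $\scr P_p(\C)$ and the identity $D_\phi^L(P_Tf)(\mu)=\E_{\C^*}\<\nn f(X_T^\mu),\nn_{\phi(X_0^\mu)}X_T^\mu\>_\C$ are already supplied by Proposition \ref{PNN}, so only the representation \eqref{J1} has to be produced. Fix $\mu\in\scr P_p(\C),\phi\in T_{\mu,p}$ and write $\xi=\phi(X_0^\mu)$, $Z=Z^{\mu,\phi,\ll}$ for the solution of \eqref{E10}, and $h=h^{\mu,\phi,\ll}$ for the process in \eqref{0F0}. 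First I would check, using the local boundedness of $(\si\si^*)^{-1}$ in $t$, condition $(B_2)$, and the moment bounds \eqref{F05} and \eqref{B1}, that $h$ is adapted with $\E\int_0^T|\dot h(t)|^2\d t<\8$, so that $h\in\D(D^*)$ and $D^*(h)=\int_0^T\<\dot h(t),\d W(t)\>$. Since $\si$ here is independent of the measure argument, $\si(t,X^\mu_t)\dot h(t)$ equals exactly the correction $\big(\E_{\C^*}\<D^Lb(t,\eta,\cdot)(\mu_t)(X^\mu_t),D_\phi^LX^\mu_t\>_\C\big)\big|_{\eta=X^\mu_t}+\ll Z(t)$; comparing the equation \eqref{EE3} for $w^h$ with \eqref{E10} for $Z$, the two $\ll Z$ terms cancel, and, using $\nn_\xi X^\mu_t=D_\phi^LX^\mu_t=v^\xi_t$ from Lemma \ref{Lem} and \eqref{D12''}, the process $w^h+Z$ solves the same linear SDE with memory \eqref{D007} (whose $D^L\si$ term vanishes), with the same initial value $\phi(X_0^\mu)$, as $v^\xi$; hence by uniqueness $\nn_{\phi(X_0^\mu)}X_T^\mu=w^h_T+Z^{\mu,\phi,\ll}_T$. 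Substituting into the $L$-derivative formula, splitting the pairing, and applying Proposition \ref{P4.3} to the $w^h$-part (after a routine truncation passing from $C_b^1(\C)$ to $C_p^1(\C)$, justified by the polynomial growth of $\nn f$ and \eqref{ESTY}) gives $\E[(\nn_{w^h_T}f)(X_T^\mu)]=\E[f(X_T^\mu)D^*(h)]$, which is \eqref{J1}; its independence of $\ll\ge0$ is automatic, since the left side of \eqref{J1} is $D_\phi^L(P_Tf)(\mu)$.

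For the asymptotic estimate \eqref{J4} in all the listed cases, subtracting the Bismut term and using \eqref{J1} yields
$$\Big|D_\phi^L(P_Tf)(\mu)-\E\Big(f(X_T^\mu)\int_0^T\<\dot h^{\mu,\phi,\ll}(s),\d W(s)\>\Big)\Big|=\big|\E_{\C^*}\<\nn f(X_T^\mu),Z^{\mu,\phi,\ll}_T\>_\C\big|,$$
so by H\"older's inequality with exponents $\ff p{p-1}$ and $p$ the left side is at most $\{(P_T\|\nn f\|^{p/(p-1)})(\mu)\}^{(p-1)/p}\big(\E\|Z^{\mu,\phi,\ll}_T\|_\C^p\big)^{1/p}$. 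Thus \eqref{J4} reduces, in both cases, to the exponential decay bound
$$\E\|Z^{\mu,\phi,\ll}_T\|_\C^p\le c\,\e^{-\dd pT}\|\phi\|_{T_{\mu,p}}^p,\qquad \ll\ge\ll_0,\ T>0,$$
with $\dd>0$ arbitrary under the hypotheses of the second assertion and $\dd>0$ a fixed constant (depending on $K,r_0,p$) under \eqref{DFF}; here one uses $\E\|Z^{\mu,\phi,\ll}_0\|_\C^p=\mu(\|\phi\|_\C^p)=\|\phi\|_{T_{\mu,p}}^p$.

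The decay bound is the heart of the argument. The plan is to apply It\^o's formula to $\e^{\dd pt}|Z^{\mu,\phi,\ll}(t)|^p$, bound the finite-variation part by means of the uniform-in-$T$ constant $K$ of $(B_3)$ (which gives both the monotonicity bound $\le\ff{p(p-1)K}2|Z(t)|^{p-2}\|Z_t\|_\C^2$ and $\|\{(\nn_{Z_t}\si)(t,\cdot)\}(X^\mu_t)\|_{\rm HS}^2\le K\|Z_t\|_\C^2$), then take the supremum over $[0,T]$ and invoke the BDG inequality. The hard part is the mismatch between the pointwise damping $-p\ll|Z(t)|^p$ and the segment feedback $\|Z_t\|_\C^p=\sup_{(t-r_0)^+\le s\le t}|Z(s)|^p$, compounded by the martingale term: a $T$-uniform decay rate has to be extracted by a Gronwall/Halanay-type iteration over windows of length $r_0$, in which $\e^{\dd ps}\|Z_s\|_\C^p$ is traded for $\e^{\dd pr_0}\sup_{(s-r_0)^+\le r\le s}\e^{\dd pr}|Z(r)|^p$ and the window-BDG step generates the numerical factor $32$, the feedback coefficient coming out of the form $K\big(p(p-1)+32p^2\e^{\dd pr_0}\big)\e^{\dd pr_0}$, which is exactly what appears in \eqref{DFF}. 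When $p>4$ there is extra room in Young's inequality applied to $|Z(t)|^{p-2}\|Z_t\|_\C^2$, and when $\|\nn b(t,\cdot,\mu)(\xi)\|$ is bounded the drift contribution of $b$ is controlled by $\|Z_t\|_\C$ with a free Young parameter; in either case the damping $-p\ll|Z(t)|^p$ can be made to overcome any prescribed $\dd>0$ by enlarging $\ll$, giving the second assertion. When $p\in[2,4]$ with no such room, one instead fixes the maximizing $\aa$ in \eqref{DFF}, and the smallness of $K$ then makes the iteration contractive at the definite rate $\dd\sim\aa/p$, giving the third. Feeding the decay bound into the H\"older estimate above completes the proof; this delay-stability step, where the conditions $p>4$, boundedness of $\nn b$, and \eqref{DFF} are all consumed to secure a $T$-uniform exponential rate, is the principal obstacle.
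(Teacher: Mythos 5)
Your treatment of assertion (1) and the reduction of \eqref{J4} to a decay estimate is correct and is essentially the paper's route: $L$-differentiability comes from Proposition \ref{PNN}, the identification $\nn_{\phi(X_0^\mu)}X^\mu_T=w^{h}_T+Z^{\mu,\phi,\ll}_T$ follows by comparing \eqref{EE3}, \eqref{E10} and \eqref{D007} through uniqueness, and the Bismut term is produced by the Malliavin integration by parts of Proposition \ref{P4.3}; subtracting and applying H\"older then reduces everything to $\E\|Z^{\mu,\phi,\ll}_T\|_\C^p\le c\,\e^{-\dd pT}\|\phi\|_{T_{\mu,p}}^p$. Your sketch of the $p\in[2,4]$ case under \eqref{DFF} (It\^o on $\e^{\aa t}|Z|^p$, dropping the damping, window-BDG producing the factor $32$, Gronwall with rate $\aa-\gg>0$ from the smallness of $K$) also matches Lemma \ref{Lem6}(3).

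The genuine gap is in the second assertion, where an \emph{arbitrary} rate $\dd>0$ must be achieved by enlarging $\ll$. Your proposed mechanism — Young's inequality with ``extra room'' for $p>4$, or a free Young parameter when $\nn b$ is bounded, so that the damping $-p\ll|Z(t)|^p$ overcomes any prescribed $\dd$ — does not work. After It\^o, Young and the window-BDG step, the feedback enters through $\int_0^t\e^{\aa s}\E\|Z_s\|_\C^p\,\d s$ with a coefficient depending only on $K,p,r_0$ and \emph{not} on $\ll$: the pointwise damping $-p\ll|Z(t)|^p$ cannot absorb the segment supremum $\|Z_t\|_\C^p$ coming from $(B_3)$ and from the quadratic variation of the martingale part, so this route only yields decay under a smallness condition on $K$ — i.e.\ it reproduces case (3), not case (2). (Nor is there any genuine ``extra room'' in $|Z|^{p-2}\|Z_t\|_\C^2\le\ff{p-2}p|Z|^p+\ff2p\|Z_t\|_\C^p$ for $p>4$ versus $p\in(2,4]$.) The missing ingredient is the variation-of-constants representation combined with the exponential stochastic convolution estimate of \cite[Lemma 2.2]{E09} (Lemma \ref{LBJJ}): writing $\e^{2\bb t}|Z(t)|^2$ (resp.\ $\e^{\bb t}Z(t)$) as an initial term plus convolutions against $\e^{-2(\ll-\bb)(t-s)}$ (resp.\ $\e^{-(\ll-\bb)(t-s)}$), both the deterministic and the stochastic feedback coefficients become $r_{\ll-\bb}$ with $r_\aa\to0$ as $\aa\to\infty$, and only then does Gronwall give a rate that can be pushed to any $\dd$ by taking $\ll$ large. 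Since Lemma \ref{LBJJ} requires the moment exponent to exceed $2$, it must be applied to $|Z|^2$ with exponent $p/2$ (forcing $p>4$) or directly to $Z$ with exponent $p$ (possible only when the drift increment is bounded by $\|Z_t\|_\C$, i.e.\ when $\nn b$ is bounded) — this is precisely the origin of the dichotomy in assertion (2), which your plan does not account for.
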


To prove this result, we present the following two lemmas, where the first one is due to  \cite[Lemma 2.2]{E09}.

\beg{lem}\label{LBJJ} Let $M(t)$ be a continuous real martingale with $\d\<M\>(t) = g(t)\d t$, and  let
$$F_\aa(t)= \int_0^t \e^{-\aa(t-s)} \d M(s),\ \ t\ge 0,~~\aa>0.$$  Then for any $p>2$, there exists a function $r: [0,\infty)\to [0,\infty)$ with $r_\aa\to 0$ as $\aa\to\infty$ such that
$$\E\bigg[\sup_{s\in [0,t]} |F_\aa (s)|^p\bigg]\le r_\aa \E\int_0^t g(s)^{\ff p 2}\d s,\ \ t\ge 0.$$ Consequently, for any progressively measurable process $A(t)$ on $\R^{d}\otimes \R^m$,
$$\E\bigg[\sup_{s\in [0,t]} \bigg|\int_0^s \e^{-\aa(s-u)} A(u) \d W(u) \bigg|^p\bigg]\le d^{p-1}r_\aa\E\int_0^t \|A(s)\|^{p}\d s,\ \ t\ge 0.$$
\end{lem}

\begin{lem}\label{Lem6} Assume  {\bf (B)}  for some $p\ge 2$
  such that $(B_3)$ holds  for some constant  $K$ uniformly in $T>0$.
\beg{enumerate} \item[$(1)$]  If either $p>4$ or $p>2$ but $\|\nn b(t,\cdot,\mu)(\xi)\|$ is bounded,  then
  for any $\dd>0$, there   exist constants $c,\ll_0>0$ such that
  \begin{equation}\label{F01}
\E [\| Z_t^{\mu,\phi,\ll}\|_\C^p] \le c\,  \e^{-\dd  t}\|\phi\|_{T_{\mu,p}}^p,~~~t\ge 0,~ \mu\in \scr P_p(\C), ~\phi\in T_{\mu,p}, \ll\ge \ll_0.
\end{equation}
\item[$(2)$]  If $p\in [2,4]$ and 
$\eqref{DFF} $  holds,   then there exists constants $c,\dd,\ll_0>0$ such that
 $\eqref{F01}$ holds.\end{enumerate}
 \end{lem}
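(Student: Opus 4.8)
The plan is to apply It\^o's formula to $|Z^{\mu,\phi,\ll}(t)|^p$ and extract the exponential decay from the damping term $-\ll Z^{\mu,\phi,\ll}$, while controlling the memory contribution $\|Z^{\mu,\phi,\ll}_t\|_\C^p$ by a Gronwall-type argument on the sup-norm over windows of length $r_0$. Writing $Z(t):=Z^{\mu,\phi,\ll}(t)$ for brevity, It\^o's formula together with $(B_3)$ gives, for some constant depending only on $K$ and $p$ (uniform in $T$),
\[
\d|Z(t)|^p\le \Big\{\big(\tfrac{p}{2}K + \tfrac{p(p-2)}{2}K\big)|Z(t)|^{p-2}\|Z_t\|_\C^2 - p\ll|Z(t)|^p\Big\}\d t+\d M(t),
\]
where $M(t)=p\int_0^t|Z(s)|^{p-2}\langle Z(s),\{(\nn_{Z_s}\si)(s,\cdot)\}(X_s^\mu)\d W(s)\rangle$. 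Using $|Z(t)|^{p-2}\|Z_t\|_\C^2\le |Z(t)|^{p-2}\|Z_t\|_\C^2$ and Young's inequality one bounds the drift by a constant multiple of $\|Z_t\|_\C^p$, so after choosing $\ll$ large the factor $\e^{p\ll t}$ (via the integrating-factor trick $\d(\e^{p\ll t}|Z(t)|^p)$) absorbs the non-damped growth. Concretely, I would work with $V(t):=\e^{\dd t}\E\|Z_t\|_\C^p$ for a to-be-chosen $\dd>0$ and show $V(t)\le V(0)+ c\int_0^t \e^{-\eta(t-s)}V(s)\,\d s$ for constants with $c/\eta<1$ once $\ll$ is large, which forces $V(t)$ to stay bounded and hence $\E\|Z_t\|_\C^p\le c\,\e^{-\dd t}\|\phi\|_{T_{\mu,p}}^p$.

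The case split in the statement corresponds precisely to how hard it is to close the BDG estimate for $\sup_{0\le s\le t}|M(s)|$. In part~(1), when $p>4$ or $\|\nn b\|$ is bounded, the martingale term after BDG produces
\[
\E\Big(\sup_{0\vee(t-r_0)\le s\le t}|M(s)|\Big)\le c\,\E\Big(\int_{0\vee(t-r_0)}^t |Z(s)|^{2(p-1)}\|Z_s\|_\C^2\,\d s\Big)^{1/2},
\]
and Young's inequality splits this into $\tfrac12\E(\sup|Z|^p)$ plus $c\int \E\|Z_s\|_\C^p\,\d s$; here the exponents work out without needing a smallness condition on $K$ because the damping $\ll$ has room to dominate. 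The key observation is that on each window $[kr_0,(k+1)r_0]$ the supremum of $\|Z_t\|_\C$ is controlled by $\|Z_{kr_0}\|_\C$ plus a contraction-like term, so iterating over $k=0,1,2,\dots$ yields geometric decay of $\E\|Z_{kr_0}\|_\C^p$ in $k$, hence exponential decay in $t$.

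In part~(2), $p\in[2,4]$ and $\|\nn b\|$ possibly unbounded (only the growth bound in $(B_1)$), the BDG term can no longer be absorbed for free: after Young's inequality the surviving constant multiplying $\int\E\|Z_s\|_\C^p$ is roughly $K$ times an explicit factor involving $\e^{\aa r_0}$ (the $32p\e^{\aa r_0}$ type term appearing in \eqref{DFF} comes from BDG combined with the window-length exponential in the integrating factor $\e^{\aa t}$). So one introduces a parameter $\aa>0$, runs the integrating-factor argument with $\e^{\aa t}$ rather than with $\ll$ alone, and the resulting recursion for $\E\|Z_{kr_0}\|_\C^p$ is a genuine contraction only if $K$ is smaller than $\sup_{\aa>0}\tfrac{\aa}{p(p-1+32p\e^{\aa r_0})\e^{\aa r_0}}$; then $\ll$ is chosen large enough so that, combined with the $\aa$-damping, $\dd>0$ can be extracted. \textbf{The main obstacle} I anticipate is precisely bookkeeping the constant in \eqref{DFF}: one must track carefully how the BDG constant, the window length $r_0$, the exponent $p$, and the integrating-factor parameter $\aa$ interact, making sure that the quantity that needs to be a contraction is exactly $K\cdot p(p-1+32p\e^{\aa r_0})\e^{\aa r_0}/\aa<1$, and that the map $Z_0\mapsto\sup_{[0,r_0]}\|Z_t\|_\C$ can be controlled uniformly in $\mu$, $\phi$, and (for $\ll\ge\ll_0$) in $\ll$. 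The uniformity of $K$ in $T$ from the hypothesis is what makes the geometric iteration over infinitely many windows legitimate; without it the decay would deteriorate with time. Lemma~\ref{LBJJ} is not needed here but will be used in Theorem~\ref{th2}; for this lemma a plain BDG inequality suffices.
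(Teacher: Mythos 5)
There is a genuine gap in your treatment of part (1). You assert that ``Lemma~\ref{LBJJ} is not needed here \dots a plain BDG inequality suffices'' and that the damping $\ll$ ``has room to dominate.'' It does not, under plain BDG. The Gronwall recursion must run on the window quantity $G_\aa(t)=\e^{\aa(t-r_0)}\|Z_t\|_\C^p$, and the damping term $-p\ll|Z(t)|^p$ only suppresses the pointwise value $|Z(t)|^p$ in the drift; it never multiplies $\|Z_s\|_\C^p$. After a plain BDG estimate the coefficient $\gg$ in $\E G_\aa(t)\le C\|\phi\|_{T_{\mu,p}}^p+\gg\int_0^t\E G_\aa(s)\,\d s$ is of order $Kp(p-1+32p\e^{\aa r_0})\e^{\aa r_0}$, \emph{independent of} $\ll$, so you can never beat it by taking $\ll$ large: you would be forced into exactly the smallness condition \eqref{DFF}, and you could certainly not obtain the decay for an \emph{arbitrary} $\dd>0$ as part (1) claims. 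The whole point of the case split is that in cases (1) the martingale enters through the stochastic convolution $\int_0^t\e^{-(\ll-\bb)(t-s)}(\cdots)\,\d W(s)$, and Lemma~\ref{LBJJ} (the factorization estimate of \cite{E09}) gives a constant $r_{\ll-\bb}\to 0$ as $\ll\to\infty$; this vanishing constant is what lets $\ll$ absorb the martingale contribution with no restriction on $K$. Since Lemma~\ref{LBJJ} requires an exponent strictly larger than $2$, the paper applies it either to $|Z|^2$ raised to the power $p/2$ (hence $p>4$) or, when $\nn b$ is bounded, to the Duhamel representation of $Z$ itself (hence $p>2$); for $p\in[2,4]$ neither is available, plain BDG must be used, and that is precisely why \eqref{DFF} appears.

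Your part (2) is essentially aligned with the paper's argument for $p\in[2,4]$: the integrating factor $\e^{\aa t}$, the BDG constant producing the $32p\e^{\aa r_0}$ term, and the contraction requirement $Kp(p-1+32p\e^{\aa r_0})\e^{\aa r_0}<\aa$ all match. But part (1) cannot be repaired within your framework; you need the convolution estimate with $\ll$-dependent vanishing constant, which is exactly what you discarded.
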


\begin{proof} (1) Let $p>4$ and  denote by $Z_t^\ll= Z_t^{\mu,\phi,\ll}$.
  Applying It\^o's formula for \eqref{E10} and  using $(B_3)$, we obtain
\beg{equation}\label{b1}\begin{split}
\d |Z^\ll(t)|^2&=  \big\{2\,\<Z^\ll(t),\{(\nn_{Z_t^\ll}b)(t,\cdot,\mu_t)\}(X_t^\mu)\>+\|\{(\nn_{Z_t^\ll}\si)(t,\cdot)\}(X_t^\mu)\|_{\rm
HS}^2\\
&\quad-2\ll |Z^\ll(t)|^2\big\}\d t+  \d M^\ll(t)\\
&\le \big\{K  \|Z_t^\ll\|_\infty ^2-2\ll  |Z^\ll(t)|^2\big\}\d t + \d M^\ll(t),\end{split}\end{equation}
where
\beq\label{BJJ0} \d M^\ll(t):=  2 \<Z^\ll(t), \{(\nn_{Z_t^\ll}\si)(t,\cdot)\}(X_t^\mu)\d W(t)\>.\end{equation}
Then for  $\bb\in(0,\ll)$ we obtain
\begin{equation}\label{E3}    |Z^\ll(t)|^2 \e^{2\bb t}  \le |Z^\ll(0)| +  K   \int_0^t \e^{-2(\ll-\bb) (t-s)} \e^{\bb s} \|Z_s^\ll\|_\infty^2 \d s
+ \int_0^t \e^{-2(\ll-\bb) (t-s)} \e^{\bb s}   \d M^\ll(s). \end{equation}
Obviously,
\beq\label{JJD}     \e^{-\aa r_0}\sup_{s\in [t-r_0,t]} (\e^{\aa s}|Z^\ll(s)|^p)\le  G_\aa(t):=    \e^{\aa (t-r_0)}\|Z_t^\ll\|_\C^p  \le \sup_{s\in [t-r_0,t]} (\e^{\aa s}|Z^\ll(s)|^p),~~~\aa>0.\end{equation}
  Combining this with \eqref{E3}, Lemma \ref{LBJJ} and $(B_3)$ and employing H\"older's inequality, for $p>4$ we find   a positive function $r$ on $[0,\infty)$ with $r_\aa\to 0$ as
$\aa\to\infty$ such that
$$\e^{-p\bb r_0} \E[G_{p\bb} (t)]  \le 3^{\ff{p}{2}-1} \|\phi\|_{T,\mu}^p + r_{\ll-\bb}  \int_0^t \E G_{p\bb}(s)\d s,\ \ t\ge 0.$$
  Thus, by Gronwall's lemma we derive
 $$\E [G_{p\bb} (t)] \le 3^{\ff{p}{2}-1}\e^{p\bb r_0}  \|\phi\|_{T,\mu}^p \exp\big[(r_{\ll-\bb}\e^{p\bb r_0})t\big],\ \ t\ge 0.$$
This  yields
 $$\E[ \|Z_t^{\ll}\|_\C^p]\le 3^{\ff{p}{2}-1}\e^{2p\bb r_0}  \|\phi\|_{T,\mu}^p \exp\big[-(p\bb-r_{\ll-\bb}\e^{p\bb r_0})t\big],\ \ t\ge 0.$$
 This implies \eqref{F01}  by taking $\bb=\dd$ and   $p\dd-r_{\ll -\dd}\e^{p\dd r_0}\ge \dd$ for large $\ll$ due to $r_\aa\to 0$ as $\aa\to\infty$.

 (2) Let $p>2$ and   $\|\nn b(t,\cdot,\mu)(\xi)\|$  be  bounded. By \eqref{E10}, for any $\bb\in (0,\ll)$ we have
 \beg{align*} Z^\ll(t)\e^{\bb t}= &Z^\ll(0)\e^{-(\ll-\bb)t} + \int_0^t \e^{-(\ll-\bb)(t-s)} \e^{\bb s} \{(\nn_{Z_s^\ll}b)(s,\cdot,\mu_s)\}(X_s^\mu)\d s \\
& + \int_0^t \e^{-(\ll-\bb)(t-s)} \e^{\bb s}\{ (\nn_{Z_s^\ll}\si)(s,\cdot)\}(X_s^\mu)\d W(s).\end{align*}
 Combining this with \eqref{JJD}, the boundedness of $\|\nn b\|+\|\nn \si\|$ and Lemma \ref{LBJJ} and applying H\"older's inequality, we find a function $r: [0,\infty)\to [0,\infty)$ with
 $r_\aa\to 0$ as $\aa\to\infty$ such that
 $$\e^{-\bb p r_0} \E[G_\bb(t)] \le   3^{p-1} \|\phi\|_{T,\mu}^p +r_{\ll-\bb} \int_0^t \E[G_\bb(s)]\d s.$$
This, by using  Gronwall's inequality, yields
 $$\e^{p\bb t} \E[\|Z_t^\ll\|_\C^p]=\E[G_\bb(t)] \le 3^{p-1} \e^{2\bb p r_0}\|\phi\|_{T_{\mu,p}}^p \exp\big[r_{\ll-\bb}\e^{p\bb r_0} t\big],$$
which  implies \eqref{F01} by taking $\bb=2\dd$ and large enough $\ll$ such that $\e^{p\bb r_0}r_{\ll-\bb}\le\dd$ due to $r_\aa\to0$ as $\aa\to\8.$

   (3) Let $p\in [2,4]$ and  \eqref{DFF}. 
 From \eqref{b1}, we have
\beg{align*} \d |Z^\ll(t)|^2
\le \big\{K  \|Z_t^\ll\|_\infty ^2-2\ll  |Z^\ll(t)|^2\big\}\d t  +    2 \<Z^\ll(t), \{(\nn_{Z_t^\ll}\si)(t,\cdot)\}(X_t^\mu)\d W(t)\>.  \end{align*}
Then for any $p\in[2,4]$ and $ \aa\in(0,p\ll)$, by It\^o's formula and $(B_3)$, it follows that  
\beq\label{PLO9} \beg{split}  \d (\e^{\aa t} |Z^\ll(t)|^p) \le &\e^{\aa t}\Big\{ - (p \ll-\aa)  |Z^\ll(t)|^p    + \ff{1}{2}Kp(p-1)     \|Z^\ll_t\|_\C^p\Big\}\d t  \\
& +   p \e^{\aa t}|Z^\ll(t)|^{p-2} \<Z^{\ll}(t), \{(\nn_{Z_t^\ll}\si)(t,\cdot )\}(X_t^\mu)\d W(t)\>.\end{split}\end{equation}
Using \eqref{JJD} and  combining \eqref{PLO9}   with BDG's inequality,     we obtain
\beg{align*}& \E[G_\aa(t)]  \le \E\bigg[\sup_{s\in [t-r_0,t]} (\e^{\aa s} |Z^\ll(s)|^p)\bigg] \\
&\le \|\phi\|_{T_{\mu,p}}^p  + \ff{1}{2}Kp(p-1)\e^{\aa r_0}   \int_0^t \E[\eta_\aa(s) ]\d s
  + 4p\ss{2K}   \E\bigg[\bigg(\int_{(t-r_0)^+}^t  \e^{2\aa s}  |Z^{\ll}(s)|^{p} \|Z_s^\ll\|_\C^p\d s\bigg)^{\ff 1 2}\bigg]\\
&\le \|\phi\|_{T_{\mu,p}}^p +  \ff{1}{2}Kp(p-1  + 32p \e^{\aa r_0} )\e^{\aa r_0} \int_0^t \eta_\aa(s)\d s +   \ff 1 2 \E[\eta_\aa(t)].\end{align*}
Whence,  Gronwall's inequality yields
$$ \E[G_\aa(t)]\le 2 \|\phi\|_{T_{\mu,p}}^p\e^{\gg   t},\ \ \gg := Kp(p-1  + 32 p\e^{\aa r_0} )\e^{\aa r_0}.$$
This, together with  \eqref{JJD},  leads to
$$\E[ \|Z_t^\ll\|_\C^p]\le 2\e^{\aa r_0}  \|\phi\|_{T_{\mu,p}}^p\e^{-(\aa-\gg ) t},\ \ t\ge 0, \aa\in (0, p\ll ).$$
By \eqref{DFF}, we may find  $\ll_0>0$ large enough  
and  $\aa\in (0,p\ll_0)$  such that
$\dd:= \aa-\gg  >0,$ so that  \eqref{F01} holds for some constant $c>0$  and all $\ll\ge \ll_0.$
    \end{proof}


\begin{proof}[The proof of Theorem \ref{th2}] The  $L$-differentiability is implied by Proposition \ref{PNN}.  So,  it suffices to prove \eqref{J1} and \eqref{J4}.
For  simplicity,  let $h^\ll(t)=h^{\mu,\phi,\ll}(t)$, which was given   in \eqref{0F0}.
    By   {\bf (B)},
  \eqref{F05} and \eqref{F01},  $h^\ll\in L^2(\OO\to\mathcal {H};\P)$  is  adapted. According to Lemmas \ref{Lem5} and \ref{Lem}, the process
$Z(t):= \nn_{\phi(X_0^\mu)} X^\mu(t)- D_{h^{\ll}} X^\mu(t) $ solves the SDE with memory
\begin{equation*}
\begin{split}
\d Z(t)&=\big\{ \{(\nn_{Z_t}b)(t, \cdot, \mu_t)\}(X_t^\mu)-\ll
Z(t)\big\}\d t +
 \{(\nn_{Z_t}\si)(t,\cdot)\}(X_t^\mu) \d
W(t),~t\ge0,~~ Z_0=\phi(X_0^\mu).
\end{split}
\end{equation*}  Therefore,
 the uniqueness of solutions to \eqref{E10} yields
$$Z(t)=\nn_{\phi(X_0^\mu)} X^\mu(t)- D_{h^\ll} X^\mu(t),\ \ t\ge -r_0. $$
Combining this with the chain rule and the integration by parts formula
for the Malliavin derivative, we derive
\begin{equation*}
\begin{split}
D_\phi^L(P_tf)(\mu)&=\E[D_\phi^L f(X_t^\cdot)(\mu)]=\E(_{\C^*}\<\nn f(X_t^\mu),   \nn_{\phi(X_0^\mu)} X^\mu_t\>_\C)\\
&=\E(_{\C^*}\<\nn f(X_t^\mu), Z_t+ D_{h^\ll} X^\mu_t\>_\C)
 =\E (D_{h^\ll} f(X_t^\mu) )+\E((\nn_{Z_t}f)(X_t^\mu))\\
&=\E\bigg(f(X_t^\mu)\int_0^t\big\<\dot{h}^\ll(s),\d W(s)\big\>\bigg)+ \E((\nn_{Z_t}f)(X_t^\mu)),\ \ t\ge 0,
\end{split}
\end{equation*}
i.e.   \eqref{J1} holds.  Finally, by   Lemma \ref{Lem6} and H\"older's inequality,  we deduce \eqref{J4}   from   \eqref{J1}.
\end{proof}

\subsection{A degenerate setup}
In this subsection, we  consider the following distribution-path dependent stochastic Hamiltonian system  for $X(t)=(X^{(1)}(t),X^{2)}(t))$ on $\R^{l+m}=\R^l\times\R^m$:
\begin{equation}\label{d1}
\begin{cases}
\d X^{(1)}(t)=b^{(1)}(t, X_t)\d t,\\
\d X^{(2)}(t)=b^{(2)}(t,X_t, \L_{ X_t })\d t+\si(t,X_t)\d W(t),
\end{cases}
\end{equation} where $(W(t))_{t\ge0}$ is an
$m$-dimensional Brownian motion on a complete filtration probability space $(\OO,\F,(\F_t)_{t\ge0},\P)$,
 $X_0\in L^p(\OO\to\C,\F_0,\P)$ for $\C:=C([-r_0,0];\R^{l+m}),$ and  $$b:=(b^{(1)}, b^{(2)}): [0,\infty)\times \C\times \scr P_p(\C)\to\R^{l+m},  \ \si: [0,T]\times \C\to\R^m\otimes\R^m$$ are
 measurable satisfying one of the following assumptions.

\begin{enumerate}
\item[\bf{ (C1)}] Let  $p\in (2,\infty).$  $b(t,\xi,\mu)$ and $\si(t,\xi)$ are bounded on bounded sets,   $C^1$-smooth in $(\xi,\mu)\in \C\times\scr P_p(\C)$
  with bounded $\|\{\nn b^{(2)}(t,\cdot,\mu)\}(\xi)\|+ \|\{(\nn \si)(t,\cdot)\}(\xi)\|+\|D^Lb(t, \xi,\cdot)(\mu)\|_{T_{p,\mu}^*} $,   and    there exist  constants $\bb, \kk>0$  satisfying
  \beg{equation} \label{b3}   \kk^p  < 2^{1-\ff p 2} p^p(p- 1)^{1-p} \sup_{\aa\in (0,\bb)} \e^{-p\aa r_0}(\bb-\aa)
 \end{equation}
  such that
\beg{equation}\label{b2}  \<z^{(1)}(0),  \{(\nn_z b^{(1)})(t,\cdot)\} (\xi)\>\le  \kk |z^{(1)}(0)|\cdot \|z\|_\C  -\bb |z^{(1)}(0)|^2.
    \end{equation}

\item[\bf{ (C2)}] Let  $p\in [2,\infty).$  $b(t,\xi,\mu)$ and $\si(t,\xi)$ are bounded on bounded sets,   $C^1$-smooth in $(\xi,\mu)\in \C\times\scr P_p(\C)$
  with bounded $ \|D^Lb(t, \xi,\cdot)(\mu)\|_{T_{p,\mu}^*} $,   and    there exist  constants $K,\bb, \theta>0$  satisfying
  \beg{equation} \label{b5}    \theta^2< \sup_{\aa\in (0,\bb p)}  \ff \aa{p(p-1+32p\e^{\aa r_0})\e^{\aa r_0}}
 \end{equation}
  such that
\beg{equation}\label{b6} \begin{split}&\<z^{(1)}(0), \{(\nn_zb^{(1)})(t,\cdot,\mu)\}(\xi)\> \le K|z^{(2)}|\cdot\|z\|_\C +\ff {\theta^2} 2 \|z^{(1)}\|_\C^2-\bb|z^{(1)}(0)|^2,\\
&\<z^{(2)}(0),  \{(\nn_z b^{(2)})(t,\cdot,\mu)\} (\xi)\>
      \le   K|z^{(2)} (0)|\cdot \|z\|_\C,\\
    &  \|\{(\nn_z \si)(t,\cdot)\}(\xi)\| \le\theta \|z\|_\C,\ \ t\ge 0, z,\xi\in \C, \mu\in \scr P_p(\C).
    \end{split}
    \end{equation}
\end{enumerate}

Let $\mu_t=\L_{X_t^\mu}$ with $\L_{X_0}=\mu\in \scr P_p(\C)$, and let $\phi\in T_{\mu,p}.$  For any $\ll>0$,
consider   the  linear  SDE with memory for $Z(t)=(Z^{(1)}(t), Z^{(2)}(t))$ on $\R^{l+m}$
\begin{equation}\label{d4}\beg{split}
\d Z(t)=&\big \{ \{(\nn_{Z_t}b)(t,\cdot, \mu_t)\} (X_t^\mu)-\ll \big({\bf0},  Z^{(2)}(t)\big)\big\} \d t\\
&+\big({\bf0},\{(\nn_{Z_t}\si)(t,\cdot)\}  (X_t^\mu)\d W(t)\big) ,\ \ Z_0=\phi(X_0^\mu).
\end{split}  \end{equation}
  By \cite[Theorem 2.3]{VS}, under assumption {\bf (C1)} or {\bf (C2)},    \eqref{d4}
  has a unique functional solution.
  We denote the functional solution by $Z_t^{\mu,\phi,\ll}$ to emphasize the dependence on $\mu,\phi$ and $\ll.$
When $\si\si^*$ is invertible, let
\begin{equation}\label{d8}
\begin{split}
h^{\mu,\phi,\ll}(t) &= \int_0^t\{\si^*(\si\si^*)^{-1}\}(s,X^\mu_s)\Big\{ \ll Z^{(2)}(s))\\
&\qquad\qquad +\E\big[_{\C^*}\<D^Lb^{(2)}(s,\xi, \cdot)(\mu_s)(X^\mu_s),D_\phi^L
X^\mu_s\>_{\C}\big]\big|_{ \xi=X^\mu_s}\Big\}\d s,~~~t\ge0. \end{split}\end{equation}

 \begin{thm}\label{L00}   Assume {\bf (C1)} or {\bf (C2)}, and let  $\si\si^*$ be invertible with $ \|(\si\si^*)^{-1}\|_\infty<\8$.
Then   for any $T>0$ and $  f\in C_p^1(\C),$   $(P_Tf)(\mu)$ is $L$-differentiable in
$\mu\in \scr P_p(\C)$ such that
\begin{equation}\label{d5}
D_\phi^L(P_Tf)(\mu)= \E\bigg(f(X_T^\mu)\int_0^T\<\dot{h}^{\mu,\phi,\ll}(s),\d
W(s)\>\bigg)+ \E(\nn_{Z_T}f)(X_T^\mu),~~~   \mu\in \scr P_p(\C), \phi\in  T_{\mu,p}.
\end{equation}
Consequently,     there exist  constants $c,\dd,\ll_0>0$ such that
\begin{equation} \label{099} \beg{split}
&\bigg|D_\phi^L(P_Tf)(\mu) - \E\bigg(f(X_T^\mu)\int_0^T\<\dot{h}^{\mu,\phi,\ll}(s),\d W(s)\>\bigg)\bigg|\\
&\le c\,\e^{ -\dd T} \big\{(P_T\|\nn f\|^{\ff p{p-1}})(\mu)\big\}^{\ff{p-1}p}
 \|\phi\|_{T_{\mu,p}},    \  \  \ll\ge \ll_0, T>0,~ f\in C_p^1(\C).\end{split}
\end{equation}

\end{thm}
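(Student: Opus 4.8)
The plan is to mirror the proof of Theorem~\ref{th2}, adapting each step to the degenerate Hamiltonian structure \eqref{d1}. The $L$-differentiability of $(P_Tf)(\mu)$ for $f\in C_p^1(\C)$ is already supplied by Proposition~\ref{PNN} (note that {\bf (C1)} or {\bf (C2)} is arranged so that {\bf (B)} holds, making Lemmas~\ref{Lem5}, \ref{Lem} and Proposition~\ref{PNN} applicable), so the substance is the representation \eqref{d5} and the decay bound \eqref{099}. First I would check that, since $\si\si^*$ is invertible with $\|(\si\si^*)^{-1}\|_\infty<\infty$ and $\nn b^{(2)}$, $D^Lb^{(2)}$ are bounded, the process $h^{\mu,\phi,\ll}$ in \eqref{d8} lies in $L^2(\OO\to\mathcal H;\P)$ and is adapted, using \eqref{F05} for $\nn_\phi X^\mu=D_\phi^LX^\mu$ together with the decay estimate for $Z^{\mu,\phi,\ll}$ obtained below. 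Granting this, Lemmas~\ref{Lem5} and \ref{Lem} show that $u(t):=\nn_{\phi(X_0^\mu)}X^\mu(t)-D_{h^{\mu,\phi,\ll}}X^\mu(t)$ solves the linear SDE with memory \eqref{d4}: the extra drift $-\si\dot h^{\mu,\phi,\ll}$ acquired by $u$ is, by the very definition \eqref{d8}, exactly $-(\E_{\C^*}\<D^Lb^{(2)}(t,\eta,\cdot)(\mu_t)(X^\mu_t),\nn_\phi X^\mu_t\>_\C)|_{\eta=X^\mu_t}-\ll Z^{(2)}(t)$, which cancels the nonlocal term in the $\nn_\phi X^\mu$ equation and reproduces the $-\ll({\bf 0},Z^{(2)})$ damping; by uniqueness $u(t)=Z_t^{\mu,\phi,\ll}$. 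Then the chain rule for the $L$-derivative, the identity $D_\phi^L(P_Tf)(\mu)=\E[{}_{\C^*}\<\nn f(X_T^\mu),\nn_{\phi(X_0^\mu)}X_T^\mu\>_\C]$ from Proposition~\ref{PNN}, and the Malliavin integration by parts \eqref{ITP} give
\[
D_\phi^L(P_Tf)(\mu)=\E\big[(\nn_{Z_T^{\mu,\phi,\ll}}f)(X_T^\mu)\big]+\E\Big[f(X_T^\mu)\int_0^T\<\dot h^{\mu,\phi,\ll}(s),\d W(s)\>\Big],
\]
which is \eqref{d5}.

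The main step is then the exponential decay $\E\|Z_T^{\mu,\phi,\ll}\|_\C^p\le c\,\e^{-\dd T}\|\phi\|_{T_{\mu,p}}^p$ for large $\ll$, the degenerate counterpart of Lemma~\ref{Lem6}. Under {\bf (C1)} I would treat the two blocks of \eqref{d4} separately: the first component $Z^{(1)}$ solves a random ODE for which \eqref{b2} yields $\ff12\ff{\d}{\d t}|Z^{(1)}(t)|^2\le \kk|Z^{(1)}(t)|\,\|Z_t\|_\C-\bb|Z^{(1)}(t)|^2$, while the second component $Z^{(2)}$ is an It\^o process carrying the explicit $-\ll Z^{(2)}$ damping and bounded coefficients. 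Applying It\^o's formula to $\e^{\aa t}|Z^{(i)}(t)|^p$ for a suitable $\aa\in(0,\bb)$, using the segment comparison \eqref{JJD}, Lemma~\ref{LBJJ} (or BDG) on the $Z^{(2)}$-martingale, and Gronwall's inequality, one gets the claimed decay provided $\ll$ is large enough that $Z^{(2)}$ contracts faster than the relaxation rate $\bb$ of the first block, and provided the coupling constant $\kk$ is small in the quantitative sense of \eqref{b3}; indeed \eqref{b3} is precisely the threshold under which the Gronwall argument on $\e^{\aa t}\|Z_t\|_\C^p$ (after absorbing the $\kk\|Z_t\|_\C$ coupling and the $\e^{p\aa r_0}$ factor from the window norm) closes with a strictly positive exponent. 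Under {\bf (C2)} the diffusion coefficient is now $Z$-dependent with $\|\nn_z\si\|\le\theta\|z\|_\C$, so I would argue as in the third case of the proof of Lemma~\ref{Lem6}: apply It\^o to $\e^{\aa t}|Z(t)|^p$ directly, combine the three inequalities in \eqref{b6} with BDG (which produces the constant $32p$), and close the Gronwall estimate for $\aa\in(0,\bb p)$ under the smallness condition \eqref{b5} on $\theta^2$, again after taking $\ll$ large to kill the $Z^{(2)}$-contribution.

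Finally, \eqref{099} follows from \eqref{d5} by H\"older's inequality: $|\E(\nn_{Z_T^{\mu,\phi,\ll}}f)(X_T^\mu)|\le\{\E\|\nn f(X_T^\mu)\|^{p/(p-1)}\}^{(p-1)/p}\{\E\|Z_T^{\mu,\phi,\ll}\|_\C^p\}^{1/p}$, and inserting the decay bound and relabelling $\dd/p$ as $\dd$ gives the stated estimate for all $\ll\ge\ll_0$, $T>0$ and $f\in C_p^1(\C)$. The hard part will be the decay estimate for $\E\|Z_T^{\mu,\phi,\ll}\|_\C^p$ in the degenerate regime: because the first component carries no noise, its contraction must be extracted entirely from the dissipativity constant $\bb$ in \eqref{b2}/\eqref{b6} while at the same time absorbing both the coupling to $Z^{(2)}$ and the memory term $\|Z_t\|_\C$, so that making the two blocks contract jointly is exactly what forces the smallness conditions \eqref{b3} and \eqref{b5}; bookkeeping of the $\e^{\aa r_0}$ factors arising from the supremum norm on the window in \eqref{JJD} and of the explicit martingale constant $32p$ is the delicate point, as in the proof of Lemma~\ref{Lem6}.
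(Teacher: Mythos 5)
Your overall strategy coincides with the paper's: $L$-differentiability from Proposition \ref{PNN}, the identification of $Z^{\mu,\phi,\ll}_t=\nn_{\phi(X_0^\mu)}X^\mu_t-D_{h^{\mu,\phi,\ll}}X^\mu_t$ as the unique solution of \eqref{d4} via Lemmas \ref{Lem5} and \ref{Lem}, Malliavin integration by parts (Proposition \ref{P4.3}) for \eqref{d5}, and then the exponential decay of $\E\|Z^{\mu,\phi,\ll}_T\|_\C^p$ (the paper's Lemma \ref{L7.3}) combined with H\"older's inequality for \eqref{099}. Your treatment of the decay under {\bf (C1)} (random ODE for the first block, $\ll$-damped It\^o process for the second, Lemma \ref{LBJJ} and Gronwall) also matches the paper.

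The one place your sketch would not close as written is the decay estimate under {\bf (C2)}. You propose to apply It\^o's formula to $\e^{\aa t}|Z(t)|^p$ directly, but with the unweighted norm the dissipative term $-2\bb|Z^{(1)}|^2$ must absorb \emph{both} the $\theta^2\|z^{(1)}\|_\C^2$ contribution from the first inequality in \eqref{b6} \emph{and} the full diffusion contribution $\theta^2\|z\|_\C^2$, so the Gronwall coefficient comes out as (essentially) $2\theta^2$ rather than $\theta^2$, and the threshold \eqref{b5} is calibrated to $\theta^2$. The missing device is the weighted Lyapunov function $\rr(t)=\ss{|Z^{(1)}(t)|^2+\vv|Z^{(2)}(t)|^2}$ with a small parameter $\vv$: the factor $\vv$ in front of the $Z^{(2)}$-block makes the diffusion and the $K|z^{(2)}|\cdot\|z\|_\C$ coupling enter only at order $\vv\|Z_t\|_\C^2\le\|\rr_t\|_\C^2$, so that after Young's inequality (using the $\ll\vv|Z^{(2)}|^2$ damping) the effective coefficient $\gg_{\ll,\vv}$ tends to $\theta^2$ as $\vv\downarrow0$ and $\ll\to\infty$, which is exactly what \eqref{b5} requires. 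Without this weighting you would need a condition roughly twice as strong as \eqref{b5}. The remainder of your argument is sound.
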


To prove this result, we first present the following lemma.

 \begin{lem}\label{L7.3} Assume {\bf (C1)} or {\bf (C2)}.
Then there exist constants $c,\dd,\ll_0>0$ such that for any $\ll\ge \ll_0,$
  \begin{equation}\label{bb3}
\E [\| Z_t^{\mu,\phi,\ll}\|_\C^p] \le  c\, \e^{-\dd t}\|\phi\|_{T_{\mu,p}}^p,~~~t\ge 0,~ \mu\in \scr P_p(\C), ~\phi\in T_{\mu,p}.
\end{equation}
 \end{lem}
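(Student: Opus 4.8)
The plan is to establish \eqref{bb3} by an It\^o--Gronwall argument on an exponentially weighted $p$-th moment, in the spirit of the proof of Lemma \ref{Lem6}, but now exploiting the Hamiltonian structure of \eqref{d4}: the artificial damping $-\ll Z^{(2)}$ controls the second block $Z^{(2)}$ once $\ll$ is large, while the intrinsic dissipativity $-\bb|z^{(1)}(0)|^2$ of \eqref{b2} (case {\bf (C1)}), resp.\ of the first line of \eqref{b6} (case {\bf (C2)}), controls the first block $Z^{(1)}$. Write $Z(t)=Z^{\mu,\phi,\ll}(t)=(Z^{(1)}(t),Z^{(2)}(t))$, note $\E\|Z_0\|_\C^p=\|\phi\|_{T_{\mu,p}}^p$, fix a weight parameter $\aa>0$ to be chosen (entering in the form $\e^{\aa t}$ under {\bf (C2)} and $\e^{p\aa t}$ under {\bf (C1)}, the latter to match the factor $\e^{-p\aa r_0}$ in \eqref{b3}), and let $G^{(i)}_\aa(t)$ be the correspondingly weighted segment norm, e.g.\ $G^{(i)}_\aa(t)=\e^{\aa(t-r_0)}\|Z^{(i)}_t\|_\C^p$ under {\bf (C2)}. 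Since $\e^{\aa s}|Z^{(i)}(s)|^p\le\e^{\aa r_0}G^{(i)}_\aa(t)$ for $s\in[t-r_0,t]$ and $\|Z_t\|_\C^p\le 2^{(p-2)/2}(\|Z^{(1)}_t\|_\C^p+\|Z^{(2)}_t\|_\C^p)$ for $p\ge2$, it suffices to bound $G^{(1)}_\aa+G^{(2)}_\aa$; as in the proof of Lemma \ref{Lem1}, all computations are first done up to an exit time $\tau_n=\inf\{t:\|Z_t\|_\C\ge n\}$ and $n\to\8$ at the end.

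First I would estimate the second block. Applying It\^o's formula to $\e^{\aa t}|Z^{(2)}(t)|^p$, using the boundedness of $\{(\nn b^{(2)})(t,\cdot,\mu)\}(\cdot)$ and $\{(\nn\si)(t,\cdot)\}(\cdot)$ under {\bf (C1)} (resp.\ the second and third lines of \eqref{b6} under {\bf (C2)}), the BDG inequality (producing the combinatorial constant $32p\e^{\aa r_0}$ exactly as in the Gronwall step \eqref{PLO9} of Lemma \ref{Lem6}(3)), Young's inequality, and the damping $-2\ll|Z^{(2)}(t)|^2$, and absorbing half of $\E\sup\|Z^{(2)}\|^p$, I expect a bound
$$\E[G^{(2)}_\aa(t)]\le c\,\|\phi\|_{T_{\mu,p}}^p+\vv(\ll)\int_0^t\E[G^{(1)}_\aa(s)]\,\d s+c_1(\ll)\int_0^t\E[G^{(2)}_\aa(s)]\,\d s,$$
where the gain $\int_0^t\e^{-(p\ll-\aa)(t-s)}\,\d s\lesssim\ll^{-1}$ from the damping makes $\vv(\ll)$, and--after moving the $\|Z^{(2)}\|$-contribution of $\|Z\|^p$ to the left--the effective coefficient $c_1(\ll)$, arbitrarily small as $\ll\to\8$. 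Hence, for $\ll$ large, Gronwall's lemma gives $\E[G^{(2)}_\aa(t)]\le c\|\phi\|_{T_{\mu,p}}^p+\vv(\ll)\int_0^t\E[G^{(1)}_\aa(s)]\,\d s$ with $\vv(\ll)\to0$, reducing matters to $G^{(1)}_\aa$.

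For the first block, \eqref{b2} (case {\bf (C1)}, $p>2$) gives, for $\aa<\bb$, that the It\^o derivative of $\e^{p\aa t}|Z^{(1)}(t)|^p$ is at most $\e^{p\aa t}\{-p(\bb-\aa)|Z^{(1)}(t)|^p+p\kk|Z^{(1)}(t)|^{p-1}\|Z_t\|_\C\}$, whereas in case {\bf (C2)} the first line of \eqref{b6} with weight $\e^{\aa t}$ gives, for $\aa<p\bb$, a bound $\e^{\aa t}\{-(p\bb-\aa)|Z^{(1)}(t)|^p+Cp|Z^{(1)}(t)|^{p-2}(K|Z^{(2)}(t)|\,\|Z_t\|_\C+\tfrac{\theta^2}2\|Z^{(1)}_t\|_\C^2)\}$; in both cases the term from differentiating the weight has been cancelled against the $-p\bb|Z^{(1)}(t)|^p$ from the dissipation, which is precisely what allows $\aa$ to be taken up to $\bb$ (resp.\ $p\bb$). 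Splitting $\|Z_t\|_\C^p$ into its two blocks, applying Young's inequality with an optimally tuned parameter (this produces the constants $2^{1-p/2}p^p(p-1)^{1-p}$ of \eqref{b3}, resp.\ $p(p-1+32p\e^{\aa r_0})$ of \eqref{b5}), and using $\e^{\aa s}\|Z^{(i)}_s\|_\C^p\le\e^{\aa r_0}G^{(i)}_\aa(s)$ to absorb the segment norms, I expect
$$\E[G^{(1)}_\aa(t)]\le c\,\|\phi\|_{T_{\mu,p}}^p+\big(\gg(\aa)-\aa\big)\int_0^t\E[G^{(1)}_\aa(s)]\,\d s+\vv(\ll)\int_0^t\E[G^{(2)}_\aa(s)]\,\d s,$$
where $\gg(\aa)-\aa<0$ for a suitable choice of $\aa$ precisely because of the smallness hypothesis \eqref{b3} (resp.\ \eqref{b5}), $\gg(\aa)$ being the coefficient coming out of the Young/BDG bookkeeping. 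Inserting the bound for $G^{(2)}_\aa$ from the previous step and taking $\ll$ large enough that $\vv(\ll)$ is dominated by $\aa-\gg(\aa)$, Gronwall's lemma applied to $t\mapsto\E[G^{(1)}_\aa(t)+G^{(2)}_\aa(t)]$ yields $\E[G^{(1)}_\aa(t)+G^{(2)}_\aa(t)]\le c\|\phi\|_{T_{\mu,p}}^p\e^{-\dd' t}$ for some $\dd'>0$ and all $\ll\ge\ll_0$; since $\e^{\aa t}\|Z_t\|_\C^p\lesssim\e^{\aa r_0}(G^{(1)}_\aa(t)+G^{(2)}_\aa(t))$ with constants independent of $\mu,\phi,\ll$ (all the structural bounds being uniform in $t,\mu,\xi$), this gives \eqref{bb3}.

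The main obstacle is the memory: dissipation is available only pointwise in time ($-\bb|Z^{(1)}(t)|^2$ and $-\ll|Z^{(2)}(t)|^2$), whereas every forcing term carries the segment norm $\|Z_t\|_\C=\sup_{s\in[t-r_0,t]}|Z(s)|$, a supremum over the full window of length $r_0$. Reconciling these is exactly what forces the weight factors $\e^{\aa r_0}$ (and $\e^{p\aa r_0}$) into the estimates and pins down the precise thresholds \eqref{b3} and \eqref{b5}, and is the reason one must pass through the auxiliary weighted quantities $G^{(i)}_\aa$ rather than work with $\E|Z(t)|^p$ directly; a subsidiary care is to keep the $\ll$-dependence transparent so that $c$, $\dd$ and $\ll_0$ come out independent of $\ll\ge\ll_0$, $\mu\in\scr P_p(\C)$ and $\phi\in T_{\mu,p}$.
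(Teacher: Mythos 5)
Your overall toolkit (exponentially weighted window suprema $G^{(i)}_\aa$, BDG over the window of length $r_0$, Gronwall) matches the paper's, but the decoupled block-by-block scheme has two genuine gaps, and the paper's proof is organised differently precisely to avoid them. First, your claim that the coefficients $\vv(\ll)$ and $c_1(\ll)$ in the $G^{(2)}_\aa$-estimate become arbitrarily small as $\ll\to\8$ is not delivered by the argument you describe. If you apply It\^o's formula to $\e^{\aa t}|Z^{(2)}(t)|^p$ and then BDG, the damping $-p\ll|Z^{(2)}(t)|^p$ sits only in the drift; the martingale term's window supremum is bounded by $\E\big(\int_{t-r_0}^t \e^{2\aa s}|Z^{(2)}(s)|^{2p-2}\|\{(\nn_{Z_s}\si)(s,\cdot)\}(X_s^\mu)\|^2\,\d s\big)^{1/2}$, and after absorbing $\ff12\E\sup_s(\e^{\aa s}|Z^{(2)}(s)|^p)$ you are left with a multiple of $\int_0^t\e^{\aa s}\E\|Z_s\|_\C^p\,\d s$ whose constant depends on $p$ and on the bound for $\nn\si$ but \emph{not} on $\ll$: it does not vanish as $\ll\to\8$. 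Since the threshold \eqref{b3} involves only $\kk,\bb,p$, this non-small cross coefficient destroys the Gronwall balance. The paper avoids this by writing $Z^{(2)}(t)\e^{\aa t}$ in variation-of-constants form at the level of the first power and invoking Lemma \ref{LBJJ}, whose entire content is that the exponentially damped stochastic convolution carries a constant $r_{\ll-\aa}\to 0$; that is the only source of $\ll$-smallness for the \emph{martingale} contribution. (A related cosmetic difference: under {\bf (C1)} the first block of \eqref{d4} carries no noise, so the paper runs a pathwise comparison on $|Z^{(1)}(t)|$ rather than It\^o on its $p$-th power, and this is what produces the exact constant in \eqref{b3}.)

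Second, and more seriously, under {\bf (C2)} the block-decoupled scheme cannot reach the stated hypothesis \eqref{b5}. The first line of \eqref{b6} contains the cross term $K|z^{(2)}|\cdot\|z\|_\C$ with an \emph{arbitrary} constant $K$; in your scheme this must be Young-split inside the $G^{(1)}_\aa$-inequality, producing an off-diagonal coefficient of order $K^2/\eta$ (for the Young parameter $\eta$) on $\int\E G^{(2)}_\aa$, while the $G^{(2)}_\aa$-inequality feeds back with a coefficient $\gtrsim\theta^2$ on $\int\E G^{(1)}_\aa$ coming from the BDG term just discussed. The Gronwall growth rate of the coupled system then involves $\ss{(K^2/\eta)\cdot\theta^2}\sim K\theta/\ss{\eta}$, which is invariant under any diagonal reweighting of the two blocks and is not controlled by \eqref{b5} (a condition on $\theta$ alone). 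The paper instead forms the single Lyapunov function $\rr(t)=\ss{|Z^{(1)}(t)|^2+\vv|Z^{(2)}(t)|^2}$, so that the damping $-\ll\vv|Z^{(2)}(t)|^2$ absorbs the $K^2(1+\vv)^2(\ll\vv^2)^{-1}|Z^{(2)}(t)|^2$ contribution \emph{pointwise in time, before any Gronwall step}; letting $\ll\to\8$ and then $\vv\downarrow 0$ removes every trace of $K$ and leaves exactly the threshold \eqref{b5}. This joint-Lyapunov mechanism (and Lemma \ref{LBJJ} in case {\bf (C1)}) is the missing idea; without it your argument proves the lemma only under additional smallness assumptions on $\|\nn\si\|$ and $K$ that the hypotheses do not grant.
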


\begin{proof} We   denote  $X^\mu=X, Z^{\mu,\phi,\ll}=Z=(Z^{(1)}, Z^{(2)}),$ and $\|Z_t^{(i)}\|_\C=\sup_{s\in [t-r_0,t]} |Z^{(i)}(t)|, i=1,2.$

(1) Let {\bf (C1)} hold. By \eqref{d4}, we have
\beg{align*} Z^{(2)}(t)\e^{\aa t}=& \phi^{(2)}(X_0) \e^{-(\ll-\aa)t} + \int_0^t \e^{-(\ll-\aa)(t-s)} \e^{\aa s} \{(\nn_{Z_s}b^{(2)})(s,\cdot,\mu_s)\}(X_s)\d s\\
&+ \int_0^t \e^{-(\ll-\aa)(t-s)} \e^{\aa s} \{(\nn_{Z_s}\si^{(2)})(s,\cdot)\}(X_s)\d W(s).\end{align*}
Then, by   the boundedness of $\|\nn b^{(2)}\|+\|\nn\si\|$ and applying Lemma \ref{LBJJ}, we find a  constant $c_1>0$ and a function $r: [0,\infty)\to [0,\infty)$ with $r_s\to 0$ as $s\to\infty$ such that
\beq\label{SH1} \e^{p\aa(t-r_0)}\E\|Z^{(2)}_t\|_\C^p \le c_1  \|\phi^{(2)}\|_{T_{\mu,p}}^p + r_{\ll-\aa}  \int_0^t \e^{\aa p s}\E\|Z_s\|_\C^p\d s.\end{equation}
On the other hand, by \eqref{b2} we have
$$\d |Z^{(1)}(t)|\le \{\kk\|Z_t\|_\C -\bb |Z^{(1)}(t)| \}\d t $$
so that for $\aa\in (0,\bb)$,
$$\e^{\aa t} |Z^{(1)}(t) \le \|\phi(X_0^\mu)\|_\C\e^{-(\bb-\aa)(t-s)} + \kk \int_0^t \e^{\aa s-(\bb-\aa)(t-s)} \|Z_s\|_\C\d s.$$
Hence for any $\vv>0$ there exists a constant  $c_2>0$  such that
\beg{align*}\e^{(t-r_0)p\aa}\E[\|Z_t^{(1)}\|_\C^p] &\le  \E\bigg[\sup_{s\in [t-r_0,t]} \{|Z^{(1)}(s)|\e^{\aa s} \}^p\bigg] \\
&\le c_2 \|\phi\|_{T_{\mu,p}}^p +\kk^p \Big(\ff{ 1-1/p }{\bb-\aa}\Big)^{p-1} (1+\vv) \int_0^t\e^{p\aa s}\E[ \|Z_s\|_\C^p]\d s.\end{align*}
Combining this with \eqref{SH1}, we arrive at
$$ \e^{p\aa t} \E[\|Z_t\|_\C^p]\le 2^{\ff p 2-1} \e^{p\aa t} \E[\|Z_t^{(1)}\|_\C^p +\|Z_t^{(2)}\|_\C^p]\le c_3  \|\phi\|_{T_{\mu,p}}^p +\gg_{\ll,\vv} \int_0^t\e^{p\aa s}  \E[\|Z_s\|_\C^p]\d s$$
for some constants $c_3>0$ with
$$\gg_{\ll,\vv}:=2^{\ff p 2-1} \Big( \kk^p\Big(\ff{ 1-1/p }{\bb-\aa}\Big)^{p-1} (1+\vv)  +  r_{\ll - \aa}\Big)\e^{p\aa r_0}.$$
By Gronwall's lemma, we obtain
$$ \E[\|Z_t\|_\C^p]\le c_3   \|\phi\|_{T_{\mu,p}}^p\exp\big[-(\gg_{\ll,\vv}-p\aa)t].$$
Due to  \eqref{b3},  we find a constant $\vv>0$ such that
$$ p\aa >2^{\ff p 2-1}  \kk^p\Big(\ff{ 1-1/p }{\bb-\aa}\Big)^{p-1} (1+\vv)\e^{p\aa r_0}. $$
This    implies  $$\lim_{\ll\to \infty} \gg_{\ll,\vv}= 2^{\ff p 2-1}  \kk^p\Big(\ff{ 1-1/p }{\bb-\aa}\Big)^{p-1}  (1+\vv)\e^{p\aa r_0} <p\aa.$$ Hence, we may find constants $\ll_0,\dd >0$ such that
$\aa p-\gg_
{\ll,\vv} \ge \dd$ for $\ll\ge \ll_0.$ Therefore, \eqref{bb3}  holds.

(2) Let {\bf (C2)} hold.   For   $\vv\in (0,1)$,  set
 $$\rr (t):= \ss{|Z^{(1)}(t)|^2+ \vv |Z^{(2)}(t)|^2},\ \ t\ge 0.$$
By  \eqref{b6} and It\^o's formula, for $\ll\ge  4\bb$, we have
\beg{align*} \d |\rr(t)|^2& =  \big[2\<Z^{(1)}(t), \{(\nn_{Z(t)} b^{(1)})(t,\cdot,\mu_t)\}(X^\mu_t)\> + 2 \vv \<Z^{(2)}(t), \{(\nn_{Z_t} b^{(2)})(t,\cdot,\mu_t)\}(X_t^\mu)\> \\
&\qquad +\vv \|\{(\nn_{Z_t}\si)(t,\cdot)\}(X_t^\mu)\|^2_{\rm HS} -\vv \ll |Z^{(2)}(t)|^2\big]\d t
  + 2\vv \<Z^{(2)}(t), \{(\nn_{Z_t}\si)(t,\cdot)\}(X_t^\mu)\d W(t)\>\\
  &\le\Big\{2 K(1+\vv)|Z^{(2)}(t)|\cdot \|Z_t\|_\C + \theta^2 \|Z^{(1)}_t\|_\C^2-2\bb |Z^{(1)}(t)|^2+\vv\theta^2\|Z_t\|_\C^2-\ll\vv|Z^{(2)}(t)|^2\Big\}\d t\\
  &\qquad +   2\vv \<Z^{(2)}(t), \{(\nn_{Z_t}\si)(t,\cdot)\}(X_t^\mu)\d W(t)\>\\
& \le \Big\{-2\bb |Z^{(1)}(t)|^2 -\ff{\ll\vv}{ 2} |Z^{(2)}(t)|^2 +\theta^2 \|Z^{(1)}_t\|_\C^2+  \vv\Big( \theta^2+\ff{2K^2(1+\vv)^2}{\ll\vv^2}\Big)\|Z_t\|_\C^2\Big\}\d t\\
&\qquad +   2\vv \<Z^{(2)}(t), \{(\nn_{Z_t}\si)(t,\cdot)\}(X_t^\mu)\d W(t)\>\\
&\le \big\{-2\bb |\rr(t)|^2 + \gg_{\ll,\vv}   \|\rr_t\|_\C^2\big\}\d t +  2\vv \<Z^{(2)}(t), \{(\nn_{Z_t}\si)(t,\cdot)\}(X_t^\mu)\d W(t)\>,\end{align*}
where $\|\rho_t\|_\C:=\sup_{-r_0\le \theta\le 0}|\rho(t+\theta)|$ and
\beq\label{GGG} \gg_{\ll,\vv}:=\max\Big\{\theta^2+\vv\Big( \theta^2+\ff{2K^2(1+\vv)^2}{\ll\vv^2}\Big),\ \theta^2+\ff{2K^2(1+\vv)^2}{\ll\vv^2}\Big\}.\end{equation}
Then, for any $p\ge 2$ and $(\aa\in(0,p\bb))$, it follows that  
\beq\label{PLO} \beg{split}  \d (\e^{\aa t}|\rr(t)|^p) \le &\e^{\aa t}\Big\{-(\bb p-\aa) |\rr(t)|^p + \ff{1}2 p(\gg_{\ll,\vv}+(p-2)\theta^2)   \|\rr_t\|^p_\C\Big\}\d t\\
&\quad + \vv p\e^{\aa t} |\rr(t)|^{p-2} \<Z^{(2)}(t), \{(\nn_{Z_t}\si)(t,\cdot)\}(X_t^\mu)\d W(t)\>.\end{split}\end{equation}
Noting that
\beq\label{09}\e^{-\aa r_0} \sup_{s\in [t-r_0,t]} (\e^{\aa s} |\rr(s)|^p)\le \eta_\aa(t):= \e^{\aa(t-r_0)}  \|\rr_t\|_\C^p\le   \sup_{s\in [t-r_0,t]} (\e^{\aa s} |\rr(s)|^p),\end{equation}
 and combining \eqref{PLO}   with BDG's inequality, for any $\aa\in (0, \bb p]$,   we obtain
\beg{align*}& \E[\eta_\aa(t)]  \le \E\bigg[\sup_{s\in [t-r_0,t]} (\e^{\aa s} |\rr(s)|^p)\bigg] \\
&\le \E[\|\phi(X_0^\mu)\|_\C^p] +\ff{1}2 p(\gg_{\ll,\vv}+(p-2)\theta^2) \int_0^t \E[\e^{\aa s} \|\rr_s\|^p_\C]\d s \\
&\qquad + 4\ss2 p\theta\E\bigg[\bigg(\int_{(t-r_0)^+}^t \vv^2 \e^{2\aa s}|\rr(s)|^{2p-4} |Z^{(2)}(s)|^2 \|Z_s\|_\C^2\d s\bigg)^{\ff 1 2}\bigg]\\
&\le \|\phi\|_{T_{\mu,p}}^p +  \ff{1}2 p(\gg_{\ll,\vv}+(p-2)\theta^2)\e^{\aa r_0} \int_0^t \eta_\aa(s)\d s + 4\ss2p \theta \e^{\aa r_0} \E\bigg[|\eta_\aa(t) |\bigg(\int_0^t \eta_\aa(s) \d s\bigg)^{\ff 1 2}\bigg]\\
&\le \|\phi\|_{T_{\mu,p}}^p +  \Big(\ff{1}2 p(\gg_{\ll,\vv}+(p-2)\theta^2)+ 16p^2\theta^2\e^{\aa r_0} \Big)\e^{\aa r_0} \int_0^t \eta_\aa(s)\d s + \ff 1 2 \E[\eta_\aa(t)].\end{align*}
By Gronwall's inequality, we arrive at
$$ \E[\eta_\aa(t)]\le 2 \|\phi\|_{T_{\mu,p}}^p\e^{c_{\ll,\vv}(\aa) t},\ \ c_{\ll,\vv}(\aa):= \Big(p(\gg_{\ll,\vv}+(p-2)\theta^2)+  32p^2\e^{\aa r_0} \theta^2\Big)\e^{\aa r_0}.$$
This and \eqref{09}  yield 
$$\E[ \|\rr_t\|_\C^p]\le 2\e^{\aa r_0}  \|\phi\|_{T_{\mu,p}}^p\e^{-\{\aa-c_{\ll,\vv}(\aa)\} t},\ \ t\ge 0.$$
Note that  \eqref{GGG}  implies
$$ \lim_{\vv\downarrow 0} \lim_{\ll\to\infty} c_{\ll,\vv}(\aa)= \e^{\aa r_0}\Big(p-1+32p \e^{\aa r_0} \Big)p\theta^2.$$
Then, by  \eqref{b5}, we may find $\aa\in (0, \bb p)$, small enough $\vv>0$ and large enough $\ll_0>0$ such that
$\dd:= \aa-c_{\ll_0,\vv} (\aa)>0,$ so that
$$\E[\|Z_t\|_\C^p] \le \vv^{-p} \E[ \|\rr_t\|_\C^p]\le 2\vv^{-p} \e^{\aa r_0}  \|\phi\|_{T_{\mu,p}}^p\e^{-\dd t},\ \ t\ge 0,\ll\ge \ll_0.$$
Then \eqref{bb3} holds. \end{proof}

\begin{proof}[Proof of Theorem \ref{L00}] Since the $L$-differentiability is implied by Proposition \ref{PNN}, while \eqref{099} follows from Lemma \ref{L7.3} and \eqref{d5}, it suffices to prove \eqref{d5}.

Simply denote $h= h^{\mu,\phi,\ll}$. By   {\bf(C1)} or  {\bf(C2)} , there exists a constant $c_1>0$ such that
\begin{equation}\label{d9}
\E\|v_t^\phi\|_\C^p\le c\,\e^{c_1t}\|\phi\|_{T_{\mu,p}}^p,~~~t\ge0, ~~~\phi\in T_{\mu,p}.
\end{equation}
This together with  \eqref{d8} and  \eqref{bb3} implies  that  $h\in L^2(\OO\to\mathcal H,\P)$   is adapted.
Let   $w^h_t=(w^{h,1}_t,w^{h,2}_t)$ be    the unique functional solution to the
 following   SDE with memory
\begin{equation}\label{d10}
\begin{split}
\d w^h(t)&= \{(\nn_{w^h_t}b )(t,\cdot,\mu_t)\}(X_t^\mu)\d t + \big(0, \si(t, X^\mu_t)\dot{h}(t)\big)\d t \\
&+({\bf 0},\{(\nn_{w^h_t}\si)(t,\cdot)\} (X_t^\mu)\d W(t)),~t\in [0,T],~w_0^h= {\bf 0}.
\end{split}
\end{equation}
By Lemma \ref{Lem5}, we have $w^h_t= D_hX_t^\mu$.
Next,  according to Lemma  \ref{Lem}, $v_t^\phi=(v^{\phi,1}_t,v^{\phi,2}_t):=D^L_\phi X^\mu_t $ exists in $L^2(\OO\to C([0,T];\C),\P)$ and is the unique solution to
\begin{equation}\label{d11}
\begin{split}
\d v^\phi(t)&= \{(\nn_{v^\phi(t)}b)(t,\cdot,\mu_t)\} (X_t^\mu) \d t
+(\E_{\C^*}\<D^Lb(t,\xi, \cdot)(\mu_t)(X^\mu_t),v^\phi_t\>_{\C})\Big|_{\xi=X^\mu_t} \d t\\
&\quad+ \big({\bf0},\{(\nn_{v^\phi_t}\si)(t,\cdot) \}(X_t^\mu)
\d W(t)\big),\ \ t\in[0,T],~~ v_0^\phi=\phi(X_0^\mu).
\end{split}
\end{equation}
From \eqref{d10} and \eqref{d11}  we see that
$$Z(t):= v^\phi(t)-w^{h^{\mu,\phi,\ll}}(t)$$ solves
    \eqref{d4}. In particular,
 $Z_T=v_T^\phi-w_T^h=D^L_\phi X^\mu_T-D_hX_T^\mu.$
Then   \eqref{d5} follows from Proposition \ref{P4.3}.
\end{proof}

\end{document}